\newtheorem{theorem}{Theorem}[section]
\newtheorem{lemma}[theorem]{Lemma}
\newtheorem{proposition}[theorem]{Proposition}
\newtheorem{corollary}[theorem]{Corollary}
\theoremstyle{definition}
\newtheorem{definition}[theorem]{Definition}
\newtheorem{example}[theorem]{Example}
\newtheorem{remark}[theorem]{Remark}
\newtheorem{note}[theorem]{Note}
\newcommand{\id}{\text{id}}
\newcommand{\FPdim}{\text{FPdim}}
\newcommand{\Fun}{\text{Fun}}
\renewcommand{\Vec}{\text{Vec}}
\newcommand{\Hom}{\text{Hom}}
\newcommand{\Rep}{\text{Rep}}
\newcommand{\rev}{\text{rev}}
\newcommand{\BN}{\mathbb{N}}
\newcommand{\B}{\mathcal{B}}
\newcommand{\C}{\mathcal{C}}
\newcommand{\K}{\mathcal{K}}
\newcommand{\D}{\mathcal{D}}
\newcommand{\E}{\mathcal{E}}
\newcommand{\Z}{\mathcal{Z}}
\renewcommand{\L}{\mathcal{L}}
\newcommand{\M}{\mathcal{M}}
\newcommand{\A}{\mathcal{A}}
\newcommand{\V}{\mathcal{V}}
\renewcommand{\O}{\mathcal{O}}
\newcommand{\be}{\mathbf{1}}
\newcommand{\equivalent}{\Leftrightarrow}
\renewcommand{\be}{\mathbf{1}}
\newcommand{\BZ}{{\mathbb Z}}
\newcommand{\bt}{\boxtimes}
\newcommand{\ot}{\otimes}
\begin{document}
\title{Group-theoretical properties of nilpotent modular categories}

\author{Vladimir Drinfeld}
\address{V.D.: Department of Mathematics,
University of Chicago, Chicago, IL 60637, USA}
\email{drinfeld@math.uchicago.edu}

\author{Shlomo Gelaki}
\address{S.G.: Department of Mathematics, Technion-Israel Institute of
Technology, Haifa 32000, Israel}
\email{gelaki@math.technion.ac.il}

\author{Dmitri Nikshych}
\address{D.N.: Department of Mathematics and Statistics,
University of New Hampshire,  Durham, NH 03824, USA}
\email{nikshych@math.unh.edu}

\author{Victor Ostrik}
\address{V.O.: Department of Mathematics,
University of Oregon, Eugene, OR 97403, USA}
\email{vostrik@math.uoregon.edu}

\dedicatory{To Yuri Ivanovich Manin on his 70th birthday}
\date{March 31, 2007}

\begin{abstract}
We characterize a natural class of modular categories
of prime power Frobenius-Perron dimension as representation
categories of twisted doubles of finite $p$-groups.
We also show that a nilpotent braided fusion category
$\C$ admits an analogue of the Sylow decomposition. If
the simple objects of $\C$ have
integral Frobenius-Perron dimensions  then $\C$ is group-theoretical
in the sense of \cite{ENO}. As a consequence,
we obtain that semisimple quasi-Hopf algebras of prime power
dimension are group-theoretical. Our arguments are based
on a reconstruction of twisted group doubles from Lagrangian
subcategories of modular categories (this is reminiscent to the
characterization of doubles of quasi-Lie bialgebras in terms
of Manin pairs given in \cite{Dr}).
\end{abstract}
\maketitle

\section{introduction}

In this paper we work over an algebraically closed field $k$ of characteristic $0$.

By a {\em fusion category} we mean a $k$-linear semisimple rigid
tensor category $\C$ with finitely many isomorphism classes of
simple objects, finite dimensional spaces of morphisms, and such
that the unit object $\be$ of $\C$ is simple.  We refer the reader to
\cite{ENO} for a general theory of such categories. A fusion category is
{\em pointed} if all its simple objects are invertible. A pointed fusion category
is equivalent to $\Vec_G^\omega$, i.e., the category of $G$-graded vector
spaces  with the associativity constraint given by some cocycle
$\omega\in Z^3(G,\, k^\times)$ (here $G$ is a finite group).

\subsection{Main results}
\label{main results}
\begin{theorem}
Any braided nilpotent fusion category has a unique decomposition into
a tensor product of braided fusion categories whose Frobenius-Perron dimensions
are powers of distinct primes.
\end{theorem}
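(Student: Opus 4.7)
The plan is to argue by induction on $\FPdim(\C)$, bootstrapping from the adjoint subcategory $\C_{ad}$, which is a proper fusion subcategory for any nilpotent $\C$ and which inherits both nilpotency (of smaller class) and the braiding.

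First I would establish that $\FPdim(\C)$ is a positive integer: applying the induction hypothesis to $\C_{ad}$ gives a decomposition $\C_{ad} = \bt_p \D_p$ with $\FPdim(\D_p) = p^{a_p}$, and since $\C$ is faithfully graded by the universal grading group $G := U(\C)$ with trivial component $\C_{ad}$, one has $\FPdim(\C) = |G| \cdot \FPdim(\C_{ad}) \in \BZ$. For $\C$ braided, $G$ is abelian, so ordinary Sylow decomposition gives $G = \prod_p G_p$.

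Next, for each prime $p$ dividing $\FPdim(\C)$, I would define $\C_p$ to be the full subcategory of objects whose $G$-degree lies in $G_p$ and whose image in $\C_{ad}$ lies in the factor $\D_p$. A dimension count yields $\FPdim(\C_p) = |G_p| \cdot p^{a_p}$, a power of $p$. The two points to verify are (a) that $\C_p$ is a fusion subcategory, and (b) that $\C_p$ and $\C_q$ M\"uger-centralize each other for distinct primes $p\neq q$. Granted these, the M\"uger centralizer formula together with the triviality of $\C_p \cap \C_q$ forces $\C = \bt_p \C_p$ as braided fusion categories. Uniqueness is automatic, since $\C_p$ is characterized intrinsically as the maximal subcategory whose simple objects have FPdim a power of $p$.

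The main obstacle is (b). Inside $\C_{ad}$ the factors $\D_p$ and $\D_q$ centralize each other by construction of the Deligne product, so the real content is controlling the braiding monodromy between the $G_p$- and $G_q$-graded parts of $\C$. The strategy is to show that this monodromy descends to a bicharacter on $G_p \times G_q$ valued in roots of unity whose order divides $\gcd(|G_p|, |G_q|) = 1$, forcing it to be trivial. Combining this triviality with the centralization already present inside $\C_{ad}$, and checking compatibility with the grading, should complete the argument.
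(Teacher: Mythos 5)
Your strategy---induct on the upper central series, split the universal grading group $U(\C)$ into its Sylow factors, and match these with the inductive decomposition of $\C_{ad}$---breaks down at two points, and they are precisely the points where the paper has to invest all of its machinery. The first is that your definition of $\C_p$ does not carve out the right subcategory. Take $\C=\Rep(P)\boxtimes\Rep(D)$, where $P$ is a nonabelian $p$-group with $p$ odd and $D$ is the dihedral group of order $8$. Then $\C_{ad}=\Rep(P/Z(P))\boxtimes\Rep(D/Z(D))$, so $\D_p=\Rep(P/Z(P))$, and $U(\C)=\widehat{Z(P)}\times \widehat{Z(D)}$ with $G_p=\widehat{Z(P)}$. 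Every one-dimensional representation $\chi$ of $D$ has trivial central character (since $[D,D]=Z(D)$), hence trivial degree, and $\chi\otimes\chi^*=\be\in\D_p$. Thus every simple object $X_1\boxtimes\chi$ with $X_1\in\Rep(P)$ satisfies both of your conditions, so your $\C_p$ contains $\Rep(P)\boxtimes\Rep(D/Z(D))$, of dimension $4|P|$ --- not a power of $p$, and not equal to your own count $|G_p|\cdot p^{a_p}=|P|$. The intrinsic characterization that does work (and that the paper uses for uniqueness) is condition \eqref{*}: $X\in\C_p$ iff $\Hom(\be,X^{\otimes p^k})\neq 0$ for some $k$; but with that definition the existence of the decomposition is exactly what must be proved.

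The second and deeper problem is that step (b) is the entire content of the theorem, and your sketch assumes what needs proving. For the monodromy between the $G_p$- and $G_q$-graded parts to ``descend to a bicharacter on $G_p\times G_q$,'' the double braiding $c_{YX}c_{XY}$ must be a scalar depending only on degrees, i.e., $\C_q$ must \emph{projectively} centralize $\C_p$. The induction hypothesis only gives you that $\D_p$ and $\D_q$ centralize each other inside $\C_{ad}$; promoting ``the adjoint parts centralize'' to ``the full graded pieces projectively centralize'' is the substance of the identity $(\K_{ad})'=(\K')^{co}$, which in this paper is available only for pseudounitary \emph{modular} categories --- a general braided nilpotent category may be degenerate (e.g., symmetric). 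For the same reason your final appeal to ``the M\"uger centralizer formula'' $\dim(\K)\dim(\K')=\dim(\C)$ to conclude $\C=\boxtimes_p\,\C_p$ is unavailable. The paper circumvents all of this by an entirely different route: it reduces to the integral case (Theorem~\ref{cor2}), shows $\Z(\C)$ is nilpotent, deduces via Proposition~\ref{max symmetric} and the Lagrangian-subcategory reconstruction (Theorem~\ref{main1}) that $\C$ is group-theoretical and hence dual to $\Vec_G^\omega$ with $G$ nilpotent, and then transports the Sylow decomposition of $G$ across the duality; the non-integral case follows by passing to $(\C\boxtimes\C)^{int}$. To make a direct argument along your lines work, you would first have to develop a theory of projective centralizers valid for degenerate braided fusion categories, which is not in this paper.
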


The notion of {\em nilpotent\,} fusion category was introduced in  \cite{GN};
we recall it in Subsection \ref{nilpprelim}.
Let us mention that the representation
category $\Rep(G)$ of a finite group $G$ is
nilpotent if and only if $G$ is nilpotent.  It is also known that
fusion categories of prime power Frobenius-Perron dimension are nilpotent
\cite{ENO}. On the other hand, $\Vec_G^\omega$ is nilpotent for {\em any\,}
$G$ and $\omega$. Therefore it is {\em not\,} true that any nilpotent fusion category is
a tensor product of fusion categories of prime power dimensions.

\begin{theorem}
\label{Thm 1.2}
A modular category $\C$ with integral dimensions of simple objects
is nilpotent if and only if there exists a pointed modular category $\M$ such that
$\C\boxtimes\M$ is equivalent,
as a braided tensor category, to the center of a fusion category of the form
$\Vec_G^\omega$ for a finite nilpotent group $G$.
\end{theorem}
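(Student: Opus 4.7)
The plan is to prove the two implications separately. For the ``if'' direction, suppose $\C\boxtimes\M\simeq Z(\Vec_G^\omega)$ with $G$ nilpotent and $\M$ pointed modular. Every simple of $Z(\Vec_G^\omega)=\Rep(D^\omega(G))$ has integer Frobenius--Perron dimension, being a representation of a finite-dimensional quasi-Hopf algebra; since $\M$ is pointed, the simples of $\C\boxtimes\M$ decompose as $X\boxtimes Y$ with $\FPdim(Y)=1$, forcing $\FPdim(X)\in\BZ$ for every simple $X$ of $\C$. For nilpotency, $\Vec_G^\omega$ is nilpotent because $G$ is; the Drinfeld center of a nilpotent fusion category is nilpotent (from the theory developed in \cite{GN}); the Deligne tensor product preserves nilpotency; and nilpotency is inherited by fusion subcategories, so $\C\simeq\C\boxtimes\be$, as a fusion subcategory of $\C\boxtimes\M$, is itself nilpotent.

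For the ``only if'' direction I would first apply Theorem~1.1 to factor $\C\simeq\boxtimes_p\C_p$ with each $\C_p$ a braided fusion category of FP-dimension a power of the prime $p$; modularity and integrality of FP-dimensions are both inherited by each tensor factor, since the M\"uger center of a Deligne tensor product is the tensor product of the M\"uger centers. The main technical characterization of the paper then provides, for every $\C_p$, a pointed modular category $\M_p$ of $p$-power FP-dimension, a finite $p$-group $G_p$, and a cocycle $\omega_p\in Z^3(G_p,k^\times)$ together with a braided equivalence $\C_p\boxtimes\M_p\simeq Z(\Vec_{G_p}^{\omega_p})$. Setting $\M:=\boxtimes_p\M_p$ (still pointed modular), $G:=\prod_pG_p$ (finite nilpotent as a direct product of $p$-groups), and combining the $\omega_p$ into $\omega\in Z^3(G,k^\times)$ via the natural isomorphism $H^3(G,k^\times)\simeq\prod_pH^3(G_p,k^\times)$, one obtains
\[
\C\boxtimes\M\;\simeq\;\boxtimes_p(\C_p\boxtimes\M_p)\;\simeq\;\boxtimes_p Z(\Vec_{G_p}^{\omega_p})\;\simeq\;Z(\Vec_G^\omega),
\]
using compatibility of Drinfeld centers of fusion categories with Deligne tensor products.

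The principal obstacle is the prime-power step, i.e.\ constructing $\M_p$ and identifying $G_p$ for each $\C_p$. Following the Manin-pair analogy announced in the abstract, this rests on the reconstruction principle that a modular category $\D$ is braided equivalent to $Z(\Vec_H^\eta)$ precisely when $\D$ contains a Lagrangian (Tannakian and selfcentralizing) fusion subcategory equivalent to $\Rep(H)$, with $\eta$ then determined by this embedding. The task is therefore to exhibit such a Lagrangian subcategory inside $\C_p\boxtimes\M_p$ for an appropriate pointed modular $\M_p$. The natural strategy is induction on the FP-dimension of $\C_p$: nilpotency of $\C_p$ supplies a nontrivial Tannakian subcategory $\Rep(H)\subset\C_p$ (via Deligne's theorem on symmetric categories of integer dimension together with the nilpotent decomposition), de-equivariantization produces a modular category of smaller $p$-power dimension to which the induction applies, and the pointed modular factor $\M_p$ is introduced precisely to compensate for obstructions coming from the pointed part of $\C_p$ carrying a nondegenerate quadratic form not of hyperbolic type.
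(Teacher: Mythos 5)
Your ``if'' direction contains a false intermediate claim: the Drinfeld center of a nilpotent fusion category is \emph{not} nilpotent in general. For example, $\Vec_{S_3}$ is pointed, hence nilpotent of class $1$, but $\Z(\Vec_{S_3})$ contains $\Rep(S_3)$ as a fusion subcategory and $S_3$ is not nilpotent, so $\Z(\Vec_{S_3})$ is not nilpotent. The nilpotency of $G$ must enter: the paper's argument is that for nilpotent $G$ one has $G=\prod_p G_p$, hence $\Vec_G^\omega\cong\boxtimes_p\Vec_{G_p}^{\omega_p}$ and $\Z(\Vec_G^\omega)\cong\boxtimes_p\Z(\Vec_{G_p}^{\omega_p})$ is a tensor product of modular $p$-categories, each nilpotent by \cite[Theorem 8.28]{ENO}; then $\C$ is nilpotent as a fusion subcategory. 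This is an easy repair, but as written your step is wrong.

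The ``only if'' direction is where the substance lies, and your proposal does not actually supply it. Reducing to prime-power factors via the Sylow decomposition is legitimate (that theorem is proved in the paper independently of this one), but it is also unnecessary, and what remains is exactly the hard part: producing, for a nilpotent modular category with integral dimensions, a pointed complement $\M$ and a Lagrangian subcategory of $\C\boxtimes\M$. Your proposed induction --- ``nilpotency supplies a nontrivial Tannakian subcategory, de-equivariantize, recurse'' --- fails at its first step: a nilpotent modular $p$-category need not contain \emph{any} nontrivial Tannakian (or even isotropic) subcategory; e.g.\ $\C(\BZ/2\BZ,\pm i)$ and, for odd $p$, $\C(\BZ/p\BZ,\sigma)$ with $\sigma\neq 1$ are anisotropic. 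You gesture at this (``the pointed factor compensates for obstructions'') but never say how. The paper's actual engine is Proposition~\ref{max symmetric}: in a nilpotent modular category every \emph{maximal symmetric} subcategory $\K$ satisfies $(\K')_{ad}\subseteq\K$ (proved by a lattice-theoretic argument using the modular law and the upper central series). This yields an isotropic $\E$ with $(\E')_{ad}\subseteq\E$, hence a faithful grading $\E'=\oplus_{h\in H}\E'_h$ by a metric group $(H,q)$ with $\bar{\E'}\cong\C(H,q)$. One then sets $\M:=\C(H,q^{-1})$ and checks that $\oplus_{h\in H}\,\E'_h\boxtimes h$ is Lagrangian in $\C\boxtimes\M$, after which Theorem~\ref{main1} (the bijection between Lagrangian subcategories and realizations as $\Z(\Vec_G^\omega)$, which you do correctly identify as the reconstruction principle) finishes the proof; nilpotency of $G$ follows since $\Rep(G)$ embeds in the nilpotent category $\C\boxtimes\M$. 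None of this construction appears in your proposal, so the core of the theorem is left unproved.
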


We emphasize here that in general the equivalence in Theorem~\ref{Thm 1.2}  does not respect
the spherical structures (equivalently, twists) of the categories involved and
thus {\em is not} an equivalence of modular categories. Fortunately, this is not a
very serious complication since the spherical structures on $\C$ are easy to classify:
it is well known that they are in bijection with the objects $X\in \C$ such that
$X\otimes X=\be$, see \cite{RT}.

The category $\M$ in Theorem~\ref{Thm 1.2} is not uniquely determined by $\C$. However,
there are canonical ways to choose $\M$. In particular, one  can  always
make a canonical ``minimal" choice for $\M$ such that $\dim(\M)=\prod_pp^{\alpha_p}$ with
$\alpha_p\in \{ 0,1,2\}$ for odd $p$ and $\alpha_2\in\{0,1,2,3\}$, see Remark~\ref{exponent leq 2}.

\begin{theorem} \label{pintro}
A modular category $\C$
is braided equivalent to the center of a fusion category of the form
$\Vec_G^\omega$ with $G$ being a finite $p$-group if and only if it has the
following
properties:
\begin{enumerate}
\item[(i)] the Frobenius-Perron dimension of $\C$ is $p^{2n}$ for some $n\in \mathbb{Z^+}$,
\item[(ii)] the dimension of every simple object of $\C$ is an integer,
\item[(iii)] the multiplicative central charge of $\C$ is $1$.
\end{enumerate}
\end{theorem}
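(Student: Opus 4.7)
First I would dispatch the forward implication directly: for $\C \simeq Z(\Vec_G^\omega)$ with $|G| = p^n$, the identity $\FPdim(Z(\Vec_G^\omega)) = |G|^2$ immediately gives (i); the standard classification of simple objects of the center of a pointed fusion category by pairs $(K, \rho)$ (with $K$ a conjugacy class of $G$ and $\rho$ a projective irreducible representation of its centralizer, the cocycle being read off from $\omega$) gives integral dimensions and hence (ii); and the multiplicative central charge of any Drinfeld center equals $1$, giving (iii).

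For the converse I would combine the two earlier main theorems with a Lagrangian subcategory argument. Since $\FPdim(\C) = p^{2n}$, the category $\C$ is nilpotent by \cite{ENO}, so Theorem~\ref{Thm 1.2} produces a pointed modular category $\M$ and a finite nilpotent group $G$ with $\C \boxtimes \M \simeq Z(\Vec_G^\omega)$ as braided fusion categories. Writing $G = \prod_\ell G_\ell$ for its Sylow decomposition and choosing a representative cocycle $\omega = \prod_\ell \omega_\ell$, one has $Z(\Vec_G^\omega) \simeq \boxtimes_\ell Z(\Vec_{G_\ell}^{\omega_\ell})$. The braided Sylow decomposition (the first main theorem of the paper) applied to $\M$ gives $\M \simeq \boxtimes_\ell \M_\ell$; matching $p$-primary components by the uniqueness part of that decomposition yields a braided equivalence $\C \boxtimes \M_p \simeq Z(\Vec_{G_p}^{\omega_p})$ with $G_p$ a $p$-group and $\M_p$ pointed modular of $p$-power dimension.

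The next step is to cancel $\M_p$. I would use hypothesis (iii) to show that $\M_p$, after an appropriate choice of spherical structure, has multiplicative central charge $1$ and therefore contains a Lagrangian subcategory---concretely, that $\M_p$ is braided equivalent to $Z(\Vec_A^\alpha)$ for a finite abelian $p$-group $A$ and cocycle $\alpha$. Pulling back the Lagrangian subcategory $\Rep(G_p) \subset Z(\Vec_{G_p}^{\omega_p})$ via the above braided equivalence and intersecting appropriately with the Lagrangian $\Rep(A) \subset \M_p$ should produce a Lagrangian subcategory of $\C$ of the form $\Rep(H)$. The paper's central technical tool---the reconstruction of twisted doubles from Lagrangian subcategories, advertised in the abstract---then gives $\C \simeq Z(\Vec_H^\mu)$, and a dimension count forces $H$ to be a $p$-group of order $p^n$.

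The principal obstacle is transferring the central charge condition from $\C$ to $\M_p$, because Theorem~\ref{Thm 1.2} guarantees only a braided, not a modular, equivalence. Using the classification of spherical structures on a fusion category by invertible self-dual objects (recalled in the excerpt just after Theorem~\ref{Thm 1.2}), one must adjust structures on both sides of $\C \boxtimes \M_p \simeq Z(\Vec_{G_p}^{\omega_p})$ so that hypothesis (iii) forces the central charge of $\M_p$ to be $1$. For odd $p$, a pointed modular category of $p$-power dimension with central charge $1$ has a hyperbolic underlying quadratic form and hence admits a Lagrangian subcategory; the case $p = 2$ will require a finer analysis of $2$-adic Gauss sums but is of the same essential nature.
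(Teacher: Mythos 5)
Your forward implication is fine and agrees with the paper's. The converse, however, has a genuine gap at its central step. After reducing (via Theorem~\ref{Thm 1.2} and the Sylow decomposition) to a braided equivalence $\C\boxtimes\M_p\simeq\Z(\Vec_{G_p}^{\omega_p})$, you propose to ``cancel'' $\M_p$ by pulling back the Lagrangian subcategory $\Rep(G_p)$ and ``intersecting appropriately'' with a Lagrangian subcategory of $\M_p$. No such intersection argument exists in a naive form: intersecting a Lagrangian subcategory of $\C\boxtimes\M_p$ with $\C\boxtimes\Vec$ yields only an isotropic subcategory of $\C$, which need not be Lagrangian, and hyperbolicity is not formally cancellable across a Deligne product. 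What the paper's Theorem~\ref{max symmetric  criterion} (whose proof applies to any product of modular categories) actually shows is that a Lagrangian subcategory of $\C\boxtimes\M_p$ has the form $\oplus_g\,(\L')_g\boxtimes(\mathcal{R}')_{\iota(g)}$ for isotropic $\L\subset\C$ with $(\L')_{ad}\subseteq\L$ and an anti-isometry $\iota$ of grading metric groups; to extract a Lagrangian subcategory of $\C$ from this data you must still prove that the metric $p$-group attached to the grading of $\L'$ is hyperbolic, which is exactly the combination of Theorem~\ref{tau after mod} and Proposition~\ref{basis}. So your route does not avoid the paper's key argument --- it postpones it and then omits it. Your other acknowledged obstacle, transferring $\xi=1$ to $\M_p$, is likewise left unresolved (and is a real issue if $\M$ is treated as a black box, since the equivalence of Theorem~\ref{Thm 1.2} is only asserted to be braided); it evaporates only if one uses the explicit construction $\M=\C(H,q^{-1})$ with $\xi(\C(H,q))=\xi(\C)$ from Theorem~\ref{tau after mod}.

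For comparison, the paper's proof bypasses $\M$ entirely. Since $\dim(\C)=p^{2n}$, the category $\C$ is nilpotent, so by Remark~\ref{pointed quotient+grading} there is an isotropic subcategory $\E\subseteq\C$ with $(\E')_{ad}\subseteq\E$, giving a grading $\E'=\oplus_{h\in H}\,\E'_h$ with $\E'_1=\E$ and a nondegenerate quadratic form $q(h)=\theta_V$ on $H$. Here $H$ is a $p$-group of square order $p^{2n}/\dim(\E)^2$, and $\xi(\C(H,q))=\xi(\C)=1$ by Theorem~\ref{tau after mod}, so Proposition~\ref{basis} produces a Lagrangian subgroup $H_0\subseteq H$; then $\oplus_{h\in H_0}\,\E'_h$ is a Lagrangian subcategory of $\C$, and Theorem~\ref{main1} together with a dimension count concludes. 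If you wish to salvage your outline, replace the ``intersection'' step by this metric-group argument applied to the grading group of $\L'$; as written, the proposal does not constitute a proof.
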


See Subsection \ref{gauss} for the definition of multiplicative central charge.
In order to avoid confusion we note that our definition of multiplicative
central charge is different from the definition of central charge of a modular functor
from \cite[5.7.10]{BK}; in fact, the central charge from \cite{BK} equals to the square of our central charge.



\begin{remark} \label{rintro}
If $p\ne 2$ then it is easy to see that (i) implies (ii) (see, e.g., \cite{GN}).
\end{remark}

\subsection{Interpretation in terms of group-theoretical
fusion categories and semisimple quasi-Hopf algebras.}
The notion of group-theoretical fusion category was introduced
in \cite{ENO, O1}.
Group-theoretical categories form a
large class of well-understood fusion categories which can be
explicitly constructed from finite group data (which justifies the
name). For example, as far as we know, all currently known
semisimple Hopf algebras have group-theoretical representation
categories (however, there are semisimple quasi-Hopf algebras
whose representation categories  are not group-theoretical, see
\cite{ENO}).

\begin{theorem}
Let $\C$ be a fusion category
such that  all objects of $\C$ have integer dimension
and such that its center $\Z(\C)$ is nilpotent.
Then $\C$ is group-theoretical.
\end{theorem}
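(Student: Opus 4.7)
The plan is to show that $\Z(\C)$ is braided equivalent to $\Z(\Vec_G^\omega)$ for some finite group $G$ and $3$-cocycle $\omega$. Once this is established, the standard principle that two fusion categories have equivalent Drinfeld centers if and only if they are Morita equivalent implies that $\C$ is Morita equivalent to the pointed category $\Vec_G^\omega$, which is the definition of group-theoretical.

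First note that $\Z(\C)$ satisfies the hypotheses of our earlier theorems: it is modular; its simple objects have integer Frobenius--Perron dimensions because the forgetful tensor functor $\Z(\C)\to\C$ preserves FP-dimension and $\C$ has integer dimensions by assumption; it is nilpotent by hypothesis; and as a Drinfeld center it carries a canonical spherical structure with multiplicative central charge equal to $1$.

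Applying the first theorem above (decomposition of braided nilpotent fusion categories into prime-power factors) to $\Z(\C)$, we write $\Z(\C) = \boxtimes_p \Z(\C)_p$ indexed by primes, with $\FPdim\Z(\C)_p = p^{2n_p}$ (using $\FPdim\Z(\C)=\FPdim(\C)^2=\prod_p p^{2n_p}$). Each factor is modular, because the factors of a Deligne decomposition of a modular category mutually M\"uger-centralize, and each has integer simple dimensions inherited from $\Z(\C)$. The last hypothesis of Theorem~\ref{pintro} that needs checking is condition~(iii), that $\tau(\Z(\C)_p)=1$ for every $p$. Multiplicativity of central charge under $\boxtimes$ combined with $\tau(\Z(\C))=1$ gives $\prod_p\tau(\Z(\C)_p)=1$; and since in an integer-dimensional modular category of FP-dimension $p^{2n}$ the twists are $p$-power roots of unity, $\tau(\Z(\C)_p)$ is a root of unity in $\mathbb{Q}(\zeta_{p^{k_p}})$. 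A number-theoretic argument combining the disjointness of cyclotomic fields of coprime conductor, the product constraint, and a Galois-theoretic restriction on central charges of integer-dimensional modular categories (which rules out the value $-1$ in the odd-prime case) then forces $\tau(\Z(\C)_p)=1$ individually. I expect this central-charge verification to be the main technical obstacle of the proof.

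Once Theorem~\ref{pintro} applies to each $\Z(\C)_p$, it supplies a $p$-group $G_p$ and cocycle $\omega_p$ with $\Z(\C)_p\simeq\Z(\Vec_{G_p}^{\omega_p})$. Setting $G:=\prod_p G_p$, a finite nilpotent group, and combining the cocycles via $\Z(\A\boxtimes\B)\simeq\Z(\A)\boxtimes\Z(\B)$ applied to pointed factors, we obtain $\Z(\C)\simeq\Z(\Vec_G^\omega)$. The Morita/center principle recalled at the outset then completes the proof.
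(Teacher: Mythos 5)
Your strategy --- decompose $\Z(\C)$ into prime-power modular factors and apply Theorem~\ref{pintro} to each --- is genuinely different from the paper's, and it founders exactly where you predicted, at the claim that each factor has central charge $1$. The gap is not merely technical: the ``Galois-theoretic restriction \dots which rules out the value $-1$ in the odd-prime case'' that you invoke is false. For every odd prime $p$ there is a pointed (hence nilpotent and integral) modular category of dimension $p^2$ with central charge $-1$ (Example~\ref{podd}(b)), so nothing intrinsic to an integral nilpotent modular category of dimension $p^{2n}$ forces its charge to be $1$; the correct statement (Corollary~\ref{charge}) is only that it is $\pm 1$ for odd $p$ and an $8$th root of unity for $p=2$. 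Consequently the constraint $\prod_p \xi(\Z(\C)_p)=1$, even after separating roots of unity into primary parts, leaves open possibilities such as $\xi(\Z(\C)_3)=\xi(\Z(\C)_5)=-1$. The fact that each factor of $\Z(\C)$ does have central charge $1$ is true but is essentially a consequence of the theorem being proved (it follows a posteriori from $\Z(\C)_p\cong\Z(\Vec_{G_p}^{\omega_p})$), and factors of Drinfeld centers need not have central charge $1$ in general, so no soft argument is available. Two secondary problems: the assertion that twists in an integral modular category of dimension $p^{2n}$ are $p$-power roots of unity is neither proved in the paper nor available there as a black box; and your appeal to the Sylow decomposition (Theorem~\ref{cor2}) is delicate, since in the paper that decomposition is itself deduced from the group-theoreticity of nilpotent integral modular categories, i.e., from the very result at the heart of the present statement.

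The paper avoids any factor-by-factor central-charge analysis. It proves (Proposition~\ref{max symmetric}, a lattice-theoretic argument with the upper central series) that a maximal symmetric subcategory $\K$ of the nilpotent modular category $\Z(\C)$ satisfies $(\K')_{ad}\subseteq\K$; by Corollary~\ref{useful grthr characterization} this makes $\Z(\C)$ group-theoretical, the point being that the dichotomy Theorem~\ref{dichotomy} is applied to $\Z(\Z(\C))$, whose central charge is $1$ automatically by M\"uger's theorem. Then $\C\bt\C^{\rev}$, being dual to $\Z(\C)$, is group-theoretical, and $\C$ is group-theoretical as a fusion subcategory of it. To repair your argument, replace the prime-by-prime application of Theorem~\ref{pintro} with the criterion of Corollary~\ref{group theor criterion}(iv) applied directly to $\Z(\C)$.
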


\begin{remark}
A consequence of this theorem is the following statement: every semisimple
(quasi-)Hopf algebra of prime power dimension is
group-theoretical in the sense of \cite[Definition 8.40]{ENO}.
This provides a partial answer to  a question asked in \cite{ENO}.
\end{remark}

\subsection{Idea of the proof}


We describe here the main steps in the proof of Theorem \ref{pintro}.
First we characterize centers
of pointed fusion categories in terms of Lagrangian subcategories and show
that a modular category $\C$ is equivalent to the representation
category of a twisted group double if and only if it has a
Lagrangian (i.e., maximal isotropic) subcategory of dimension
$\sqrt{\dim(\C)}$. This result is reminiscent to the characterization
of doubles of quasi-Lie bialgebras in terms of Manin pairs
\cite[Section 2]{Dr}.

Thus we need to show that a category satisfying the assumptions of Theorem \ref{pintro}
contains a Lagrangian subcategory. The proof is inspired by the following
result for nilpotent metric Lie algebras (i.e, Lie algebras with
an invariant non-degenerate scalar product) which can be derived
from \cite{KaO}: if $\mathfrak{g}$ is a nilpotent metric Lie
algebra of even dimension then $\mathfrak{g}$ contains an abelian
ideal $\mathfrak{k}$, which is Lagrangian (i.e., such that
$\mathfrak{k}^\perp =\mathfrak{k}$). The relevance of metric Lie
algebras to our considerations is explained by the fact that they
appear in \cite{Dr} as classical limits of quasi-Hopf algebras.
In fact, our proof is a ``categorification" of the proof of the above result. Thus we
need some categorical versions of linear algebra constructions involved in this proof.
Remarkably, the categorical counterparts exist for all notions required. For example
the notion of orthogonal complement in a metric Lie algebra is replaced by the
notion of centralizer in a modular tensor category introduced by M.~M\"uger \cite{Mu2}.



\subsection{Organization of the paper} Section 2 is devoted to
preliminaries on fusion categories, which include nilpotent fusion
categories, (pre)modular categories, centralizers, Gauss sums and
central charge, and Deligne's classification
of symmetric fusion categories.

In Section 3 we define the notions of isotropic and Lagrangian
subcategories of a premodular category $\C$, generalizing the
corresponding notions for a metric group (which is, by definition,
a finite abelian group with a quadratic form). We then recall a
construction, due  to A.~ Brugui\`{e}res \cite{Br} and M.~M\"uger
\cite{Mu1}, which associates to a premodular category $\C$ the
``quotient'' by its centralizer, called a {\em modularization}. We
prove in Theorem \ref{tau after mod} an invariance property of the
central charge with respect to the modularization. This result
will be crucial in the proof of Theorem \ref{main2}. We also study
properties of subcategories of modular categories and explain  in
Proposition~\ref{canonical modularization} how one can use maximal
isotropic subcategories of a modular category $\C$ to canonically
measure a failure of $\C$ to be hyperbolic (i.e., to contain a
Lagrangian subcategory).

In Section 4 we characterize hyperbolic modular  categories.
More precisely, we show in Theorem \ref{main1} that for
a modular category  $\C$ there is a bijection between
Lagrangian subcategories of $\C$ and
braided tensor equivalences $\C \xrightarrow{\sim} \mathcal{Z}(\Vec_G^\omega)$
(where $G$ is a finite group, $\omega\in Z^3(G, K^\times)$, and
$\mathcal{Z}(\Vec_G^\omega)$ is the center of $\Vec_G^\omega$).
Note that the category  $\mathcal{Z}(\Vec_G^\omega)$ is equivalent to
$\Rep(D^\omega(G))$ - the representation category of the twisted
double of $G$ \cite{DPR}.

We then prove in Theorem~\ref{dichotomy} 
that if $\C$ is a modular category such that
$\dim(\C)= n^2,\, n\in \mathbb{Z}^+$, 
the central charge of $\C$ equals $1$,
and $\C$ contains a symmetric subcategory of dimension $n$, then
either $\C$ is equivalent to the representation category of a
twisted double of a finite group or $\C$ contains an object with
non-integer dimension.

We also give a criterion for a modular category $\C$ to be 
group-theoretical. Namely, we show in 
Corollary~\ref{group theor criterion} that $\C$ is group-theoretical
if and only if there is an isotropic subcategory $\E \subset \C$ such
that $(\E')_{ad} \subseteq \E$.

In Section 5 we study pointed modular $p$-categories. We give a
complete list of such categories which do not contain non-trivial
isotropic subcategories and analyze the values of their central
charges. We then prove in Proposition \ref{basis} that a
nondegenerate metric $p$-group $(G,\, q)$ with central charge $1$
such that $|G|=p^{2n},\, n\in\mathbb{Z}^+$, contains a Lagrangian
subgroup.

Section 6 is devoted to nilpotent modular categories. There we
give proofs of our main results stated in \ref{main results}
above. They are contained in Theorem~\ref{main2},
Theorem~\ref{main3}, Corollary~\ref{cor0}, Theorem~\ref{cor2}, and
Theorem~\ref{cor3}.


\subsection{Acknowledgments} The research of V.~Drinfeld was supported by NSF grant
DMS-0401164.
The research of D.~Nikshych was supported by the NSF grant DMS-0200202 and
the NSA grant H98230-07-1-0081.   The research of V.~Ostrik was supported by NSF grant
DMS-0602263.  S.~Gelaki is grateful to the departments of mathematics
at the University of New Hampshire and MIT for their warm hospitality
during his Sabbatical. The authors are grateful to Pavel Etingof for useful discussions.







\section{Preliminaries}

Throughout the paper we work over an algebraically closed field
$k$ of characteristic $0$. All categories considered in this paper
are finite, abelian, semisimple, and $k$-linear.

\subsection{Fusion categories}


For a fusion category $\C$ let $\O(\C)$ denote the set of isomorphism classes
of simple objects.


Let $\C$ be a fusion category. Its Grothendieck ring $K_0(\C)$ is
the free $\mathbb{Z}$-module generated by the isomorphism classes
of simple objects of $\C$ with the multiplication coming from the
tensor product in $\C$. The Frobenius-Perron dimensions of objects
in $\C$ (respectively, $\FPdim(\C)$) are defined as the
Frobenius-Perron dimensions of their images in the based ring
$K_0(\C)$ (respectively, as $\FPdim(K_0(\C))$),
see \cite[8.1]{ENO}. For a semisimple
quasi-Hopf algebra $H$ one has $\FPdim(X) = \dim_k(X)$ for all $X$
in $\Rep(H)$, and so $\FPdim(\Rep(H)) = \dim_k(H)$.

A fusion category is {\em pointed} if all its simple objects are
invertible.

By a {\em fusion subcategory} of a fusion category $\C$ we
understand a full tensor subcategory of $\C$. An example of a
fusion subcategory is the maximal pointed subcategory $\C_{pt}$
generated by the invertible objects of $\C$.

A fusion category $\C$ is {\em pseudo-unitary} if its categorical
dimension $\dim(\C)$ coincides with its Frobenius-Perron
dimension, see \cite{ENO} for details. In this case $\C$ admits a
canonical spherical structure (a tensor  isomorphism between the
identity functor of $\C$ and the second duality functor) with
respect to which categorical dimensions of objects coincide with
their Frobenius-Perron dimensions \cite[Proposition 8.23]{ENO}.
The fact important for us in this paper
is that a fusion category of an integer Frobenius-Perron dimension
is automatically pseudo-unitary \cite[Proposition 8.24]{ENO}.

Let $\C$ and $\D$ be fusion categories. Recall that for a tensor
functor $F: \C\to \D$ its image $F(\C)$ is the fusion subcategory
of $\D$ generated by all simple objects $Y$ in $\D$ such that $Y
\subseteq F(X)$ for some simple $X$ in $\C$. The functor $F$ is
called {\em surjective} if $F(\C) =\D$.

\subsection{Nilpotent fusion categories}\label{nilpprelim}

For a fusion category $\C$ let $\C_{ad}$ be the trivial component
in the universal grading of $\C$ (see \cite{GN}). Equivalently,
$\C_{ad}$ is the smallest fusion subcategory of $\C$ which
contains all the objects $X\otimes X^*$, $X\in \O(\C)$.

For a fusion category $\C$ we define $\C^{(0)}= \C,\, \C^{(1)} =
\C_{ad},$ and $\C^{(n)} = (\C^{(n-1)})_{ad}$ for every integer
$n\geq 1$. The non-increasing sequence of fusion subcategories of
$\C$
\begin{equation}
\C= \C^{(0)} \supseteq  \C^{(1)} \supseteq \cdots
\supseteq \C^{(n)} \supseteq \cdots
\end{equation}
is called the {\em upper central series} of $\C$.
We say that a fusion  category $\C$ is {\em nilpotent}
if every non-trivial subcategory of $\C$ has a non-trivial group
grading, see \cite{GN}. Equivalently,
$\C$ is nilpotent if its
upper central series converges to $\Vec$ (the category of finite
dimensional $k-$vector spaces), i.e., $\C^{(n)} =\Vec$ for some
$n$. The smallest such $n$ is called the {\em nilpotency class} of
$\C$. If $\C$ is nilpotent then every fusion subcategory
$\mathcal{E}\subset \C$ is nilpotent, and if $F: \C\to \D$ is a
surjective tensor functor, then $\D$ is nilpotent (see \cite{GN}).

\begin{example}
\begin{enumerate}
\item Let $G$ be a finite group and $\C=\Rep(G)$. Then $\C$ is
nilpotent if and only if $G$ is nilpotent.
\item Pointed categories are precisely
the nilpotent fusion categories of nilpotency class~$1$. A typical
example of a pointed category is $\Vec_G^{\omega}$, the category
of finite dimensional vector spaces graded by a finite group $G$
with the associativity constraint determined by $\omega\in
Z^3(G,k^{\times})$.
\end{enumerate}
\end{example}

In this paper we are especially interested in the following class
of nilpotent fusion categories.

\begin{example}
Let $p$ be a prime number. Any category of dimension $p^n, n\in
\mathbb{Z}$, is nilpotent  by \cite[Theorem 8.28]{ENO}.  For
representation categories of semisimple Hopf algebras of dimension
$p^n$ this follows from a result of A.~Masuoka~\cite{Ma1}.
\end{example}

By \cite{GN}, a nilpotent fusion category comes from  a sequence
of gradings, in particular it has an integer Frobenius-Perron
dimension. It follows from results of \cite{ENO} that a nilpotent
fusion category $\C$ is pseudounitary.

\subsection{Premodular categories and modular categories}

Recall that a {\em braided} tensor  category $\C$ is a  tensor
category equipped with a natural isomorphism $c: \otimes \cong
\otimes^\rev$ satisfying the hexagon diagrams \cite{JS}.
Let $c_{XY} : X \ot Y \cong  Y\otimes X$ with $ X,Y\in \C$ denote the components of $c$.

A {\em balancing transformation}, or a {\em twist}, on a braided category
$\C$ is a natural automorphism $\theta: \id_\C\to \id_\C$
satisfying  $\theta_{\be} = \id_{\be}$ and
\begin{equation}
\label{balancing ax}
\theta_{X\ot Y} = (\theta_X\ot\theta_Y) c_{YX} c_{XY}.
\end{equation}
A braided fusion category $\C$ is called {\em premodular}, or {\em ribbon}, if
it has a twist $\theta$ satisfying $\theta_X^* = \theta_{X^*}$ for all
objects $X\in\C$.

The {\em $S$-matrix} of a premodular category $\C$  is
$S=\{s_{XY}\}_{X,Y\in \O(\C)}$, where $s_{XY}$ is the quantum
trace of $c_{YX}c_{XY}$, see \cite{T}. Equivalently, the
$S$-matrix can be defined as follows. For all $X,Y,Z \in \O(\C)$
let $N_{XY}^Z$ be the multiplicity of $Z$ in $X\ot Y$.  For every
object $X$ let $d(X)$ denote its quantum dimension. Then
\begin{equation}
\label{sij}
s_{XY} = \theta_X^{-1}\theta_Y^{-1} \sum_{Z\in \O(\C)}
\, N_{XY}^Z \theta_Z d(Z).
\end{equation}

The {\em categorical dimension} of $\C$ is defined by
\begin{equation}
\dim(\C) =\sum_{X\in \O(\C)}\, d(X)^2.
\end{equation}
One has $\dim(\C)\neq 0$ \cite[Theorem 2.3]{ENO}.

\begin{note}
\label{note: sph when k=C} Below we consider only fusion
categories with integer Frobenius-Perron dimensions of objects.
Any such category $\C$ is pseudo-unitary (see 2.1). In
particular, if $\C$ is braided then it has a canonical twist,
which we will always assume chosen.
\end{note}

A premodular category $\C$ is called {\em modular} if the
$S-$matrix is invertible.

\begin{example} \label{center} For any fusion category $\C$ its {\em center}
$\mathcal{Z}(\C)$ is defined as the category whose objects are
pairs $(X, c_{X, -})$, where $X$ is an object of $\C$ and
$c_{X,-}$ is a natural family of isomorphisms $c_{X, V} : X\ot V
\cong V \ot X$ for all objects $V$ in $\C$ satisfying certain
compatibility conditions (see e.g., \cite{Kass}). It is known that
the center of a pseudounitary category is modular.
\end{example}

\subsection{Pointed modular categories and metric groups}\label{CGQ}



Let $G$ be a finite  abelian group. Pointed premodular categories
$\C$ with the group of simple objects isomorphic to $G$ (up to a
braided equivalence) are in the natural bijection with {\em
quadratic forms} on $G$ with values in the multiplicative group
$k^*$ of the base field. Here a quadratic form $q: G\to k^*$ is a
map such that $q(g^{-1})=q(g)$ and
$b(g,h):=\frac{q(gh)}{q(g)q(h)}$ is a symmetric bilinear form,
i.e., $b(g_1g_2,h)= b(g_1,h)b(g_2,h)$ for all $g_1, g_2, h\in G$.
Namely, for $g\in G$ the value of $q(g)$ is the braiding
automorphism of $g\otimes g$ (here by abuse of notation $g$
denotes the object of $\C$ corresponding to $g\in G$). See
\cite[Proposition 2.5.1]{Q} for a proof that if two categories
$\C_1$ and $\C_2$ produce the same quadratic form then they are
braided equivalent (Quinn proves less canonical but equivalent
statement). We will denote the category corresponding to a group
$G$ with quadratic form $q$ by $\C(G,q)$ and call the pair $(G,q)$
a {\em metric group}. The category $\C(G,q)$ is pseudounitary and
hence has a spherical structure such that dimensions of all simple
objects equal to 1; hence the categories $\C(G,q)$ always have  a
canonical ribbon structure. The category $\C(G,q)$ is modular if
and only if the bilinear form $b(g,h)$ associated with $q$ is
non-degenerate (in this case we will say that the corresponding
metric group is {\em non-degenerate}).

\subsection{Centralizers}
Let $\mathcal{K}$ be a fusion subcategory of a
braided fusion category $\C$. In \cite{Mu1, Mu2} M.~M\"uger
introduced the {\em centralizer} $\mathcal{K}'$ of $\mathcal{K}$,
which is the fusion subcategory of $\C$ consisting of all the
objects $Y$ satisfying
\begin{equation}
\label{monodromy} c_{YX}c_{XY} =\id_{X\ot Y} \qquad \mbox{ for all
objects } X \in \K.
\end{equation}
If \eqref{monodromy} holds  we will say that objects $X$ and $Y$
{\em centralize} each other. In the case of a ribbon category
$\C$, condition \eqref{monodromy} is equivalent to $s_{XY} = d(X)
d(Y)$, see  \cite[Proposition 2.5]{Mu2}. Note that in the case of
a pointed modular category the centralizer corresponds to the
orthogonal complement. The subcategory $\C'$ of $\C$ is called the
{\em transparent} subcategory of $\C$ in \cite{Br, Mu1}.

For any fusion subcategory $\mathcal{K}\subseteq \C$ of a braided
fusion category $\C$ let $\mathcal{K}^{co}$ be the {\em
commutator} of $\mathcal{K}$ \cite{GN}, i.e., the fusion
subcategory of $\C$ spanned by  all simple objects $X\in\C$ such
that $X\ot X^*\in \mathcal{K}$. For example, if $\C=\Rep(G)$, $G$
a finite group, then any fusion subcategory $\mathcal{K}$ of $\C$
is of the form $\mathcal{K}=\Rep(G/N)$ for some normal subgroup
$N$ of $G$, and $\mathcal{K}^{co} = \Rep(G/[G, N])$ (see
\cite{GN}). It follows from the definitions that
$(\mathcal{K}^{co})_{ad} \subseteq \mathcal{K} \subseteq
(\mathcal{K}_{ad})^{co}$.

Let $\mathcal{K}$ be a fusion subcategory of a pseudounitary modular
category $\C$. It was shown in \cite{GN} that
\begin{equation}
(\mathcal{K}_{ad})'=(\mathcal{K}')^{co}.
\end{equation}


It was shown in \cite[Theorem 3.2]{Mu2} that for a fusion
subcategory  $\K$ of a modular category $\C$ one has
$\mathcal{K}'' =\mathcal{K}$ and
\begin{equation}
\label{dimK dimK'}
\dim(\mathcal{K}) \dim(\mathcal{K}') =\dim(\C).
\end{equation}
The subcategory $\mathcal{K}$ is symmetric if and only if
$\mathcal{K} \subseteq \mathcal{K}'$. It is modular if and only if
$\mathcal{K} \cap \mathcal{K}' = \Vec$, in which case $\mathcal{K}'$
is also modular and there is a braided equivalence $\C \cong \mathcal{K} \boxtimes \mathcal{K}'$.

Let $\C$ be a modular category. Then by \cite[Corollary 6.9]{GN},
$\C_{pt} = (\C_{ad})'$.


\subsection{Gauss sums and central charge in modular categories}\label{gauss}

Let $\C$ be a modular category.
For any subcategory $\K$ of $\C$
the {\em Gauss sums} of  $\K$ are defined by
\begin{equation}
\label{Gauss sums}
\tau^{\pm}(\K) = \sum_{X\in \O(\K)}\, \theta_X^{\pm 1}  d(X)^2.
\end{equation}

Below we summarize some basic properties of twists and Gauss sums
(see e.g., \cite[Section 3.1]{BK} for proofs).

Each $\theta_X, \, X\in \O(\C),$ is a root of unity (this
statement is known as Vafa's theorem). The Gauss sums are
multiplicative with respect to tensor product of modular
categories, i.e.,  if $\C_1, \C_2$ are modular categories then
\begin{equation}
\label{multiplicativity of tau}
\tau^\pm(\C_1 \bt \C_2)  = \tau^\pm(\C_1) \tau^\pm(\C_2).
\end{equation}
We also have that
\begin{equation}
\label{mod tau =dimC}
\tau^+(\C)\tau^-(\C) =\dim(\C).
\end{equation}

When $k=\mathbb{C}$ the {\em multiplicative central charge}
$\xi(\C)$ is defined by
\begin{equation}
\label{central charge}
\xi(\C) = \frac{\tau^+(\C)}{\sqrt{\dim(\C)}},
\end{equation}
where $\sqrt{\dim(\C)}$ is the positive root.
If $\dim(\C)$ is a square of an integer,
then Formula \eqref{central charge} makes sense even
if $k \neq \mathbb{C}$.
By Vafa's theorem, $\xi(\C)$ is a root of unity.

\begin{example}
The center $\Z(\C)$ of any fusion category $\C$ (see Example
\ref{center}) is a modular category with central charge $1$
\cite[Theorem 1.2]{Mu4}.
\end{example}

\subsection{Symmetric fusion categories}\label{Deligne}
The structure of symmetric
fusion categories is known, thanks to Deligne's work \cite{De}.
Namely, let $G$ be a finite group and let $z\in G$
be a central element such that $z^2=1$. Consider the category $Rep(G)$ with its standard
symmetric braiding $\sigma_{X,Y}$. Then the map
$\sigma'_{X,Y}=\frac12(1+z|_X+z|_Y-z|_Xz|_Y)\sigma_{X,Y}$ is also a
symmetric braiding on the category $Rep(G)$ (the
meaning of the
factor $\frac12(1+z|_X+z|_Y-z|_Xz|_Y)$ is the following: if $z|_X$ or $z|_Y$ equals 1,
then this factor is 1; if $z|_X=z|_Y=-1$ then this factor is $(-1)$). We will denote by $Rep(G,z)$
the category $Rep(G)$ with the commutativity constraint defined above.

\begin{theorem} (\cite{De}) Any symmetric fusion category is equivalent (as a braided tensor category)
to $Rep(G,z)$ for uniquely defined $G$ and $z$. The categorical dimension of $X\in Rep(G,z)$
equals $Tr(z|_X)$ and $\dim(\C)=\FPdim(\C)=|G|$.
\end{theorem}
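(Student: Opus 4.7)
The plan is to invoke super-Tannakian reconstruction. For uniqueness, given any equivalence $\C\simeq\Rep(G,z)$ of symmetric tensor categories, the decomposition of each representation into $z$-eigenspaces produces a symmetric tensor functor $\omega:\C\to\text{sVec}$, and then the pair $(G,z)$ is recovered intrinsically as the affine group scheme $\Aut^\otimes(\omega\circ\text{forget})$ of tensor automorphisms of the underlying $\Vec$-valued functor, equipped with the canonical central involution acting as $+1$ on the even part and $-1$ on the odd part of each $\omega(X)$. Any two such $\omega$ are $G$-conjugate, so $(G,z)$ is determined up to unique isomorphism.

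For existence, the main step is to produce a symmetric tensor functor $\omega:\C\to\text{sVec}$. This is the deep content of the theorem and is supplied by the principal result of \cite{De}: any symmetric tensor category over an algebraically closed field of characteristic zero whose objects grow subexponentially (meaning for every object $X$ there exists $C_X$ with $\ell(X^{\otimes n})\le C_X^n$ for all $n$, where $\ell$ denotes length) admits a symmetric fiber functor to $\text{sVec}$. In our fusion setting the hypothesis is automatic: since $\C$ is semisimple with integer Frobenius--Perron dimensions, one has $\ell(X^{\otimes n})\le \FPdim(X^{\otimes n})=\FPdim(X)^n$, so $C_X=\FPdim(X)$ works.

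Once $\omega$ is in hand, super-Tannakian reconstruction produces an equivalence $\C\simeq\Rep(\H)$ for an affine super-group scheme $\H$, unique up to isomorphism. Because $\O(\C)$ is finite and $\omega$ takes values in finite-dimensional super vector spaces, the Hopf superalgebra of functions on $\H$ is finite-dimensional, forcing $\H$ to be a finite super-group scheme. In characteristic zero this is the same datum as an ordinary finite group $G$ together with a central element $z$ satisfying $z^2=1$, so $\C\simeq\Rep(G,z)$. Unwinding the definition of the modified braiding, the categorical trace of $\id_X$ equals the ordinary trace of $\id_X$ composed with the action of $z$, yielding $d(X)=\Tr(z|_X)$; finally $\FPdim(\C)=|G|$ since $\Rep(G)$ and $\Rep(G,z)$ share the same Grothendieck ring, and $\dim(\C)=\FPdim(\C)$ by pseudounitarity. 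The hardest step by far is the construction of the super fiber functor, which is Deligne's theorem; everything else is classical Tannakian formalism together with elementary dimension counts.
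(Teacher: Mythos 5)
The paper gives no proof of this statement: it is imported wholesale from Deligne's paper \cite{De}, so there is nothing internal to compare your argument against. What you have written is a competent reconstruction of Deligne's own route, and it is essentially correct: you correctly isolate the one genuinely hard step (the existence of a symmetric fiber functor to super vector spaces for a symmetric tensor category of subexponential growth), which is the main theorem of \cite{De} and cannot be reproved in a paragraph, and you correctly reduce everything else to super-Tannakian formalism, Schur's lemma for the central element $z$ (which gives $d(X)=\Tr(z|_X)=\pm\dim_k X$ and hence $\dim(\C)=\sum_X d(X)^2=|G|$), and pseudounitarity. Two small soft spots are worth tightening. First, you invoke ``integer Frobenius--Perron dimensions'' to verify the growth hypothesis; integrality is neither available a priori (it is a consequence of the theorem) nor needed --- the bound $\ell(X^{\otimes n})\le\FPdim(X)^n$ holds for any fusion category simply because every simple object has Frobenius--Perron dimension at least $1$. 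Second, the assertion that a finite super-group scheme in characteristic zero ``is the same datum as'' a finite group with a central involution is false in general: the super-group scheme attached to the Hopf superalgebra $k[\theta]/(\theta^2)$ with $\theta$ odd primitive is a counterexample. One must use semisimplicity of $\C$ to conclude that the odd part of the Lie superalgebra vanishes, and only then does the super-group scheme collapse to a pair $(G,z)$. With those repairs your sketch matches the standard proof; the paper simply chooses to treat the whole statement as a black box.
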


Now assume that the category $Rep(G,z)$ is endowed with a twist $\theta$ such that the dimension
of any object is non-negative. It follows immediately from the theorem that $\theta_X=z|_X$.
We have

\begin{corollary}
\label{deligne} Let $\C$ be a symmetric fusion category with the
canonical spherical structure (see \ref{note: sph when k=C}).
\begin{enumerate}
\item[(i)] If $\dim(\C)$ is odd then $\theta_X=\id_X$ for any $X\in \C$.
\item[(ii)] In general either $\theta_X=\id_X$ for any $X\in \C$, or $\C$
contains a fusion subcategory $\C_1\subset \C$ such that $\FPdim(\C_1)=
\frac12\FPdim(\C)$ and $\theta_X=\id_X$ for any $X\in \C_1$.
\end{enumerate}
\end{corollary}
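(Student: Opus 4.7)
The plan is to invoke Deligne's theorem (stated just before the corollary) and reduce everything to elementary group theory about the central involution $z$. By Deligne's theorem, $\C$ is braided equivalent to $\Rep(G,z)$ for a uniquely determined finite group $G$ and central element $z\in G$ with $z^2=1$, and the discussion preceding the corollary shows that under the canonical spherical structure the twist is given by $\theta_X=z|_X$ on every object $X$.

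For part (i), if $\dim(\C)=|G|$ is odd, then the cyclic subgroup $\la z\ra$ has order dividing both $2$ and $|G|$, hence $z=1$. Consequently $\theta_X=z|_X=\id_X$ for every $X\in\C$.

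For part (ii), if $z=1$ we are already done, so assume $z$ has order exactly $2$. Let $\C_1\subset\C$ be the full subcategory whose objects are the representations $X$ of $G$ on which $z$ acts as the identity. Equivalently, $\C_1$ is the image of the pull-back functor $\Rep(G/\la z\ra)\hookrightarrow\Rep(G,z)$ induced by the quotient map $G\to G/\la z\ra$. Since the condition $z|_X=\id_X$ is preserved under tensor products, duals, and subobjects, $\C_1$ is a fusion subcategory of $\C$. Moreover, the equivalence $\C_1\simeq\Rep(G/\la z\ra)$ gives
\begin{equation*}
\FPdim(\C_1)=|G/\la z\ra|=\tfrac{1}{2}|G|=\tfrac{1}{2}\FPdim(\C).
\end{equation*}
By construction $\theta_X=z|_X=\id_X$ for every $X\in\C_1$.

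There is essentially no genuine obstacle here; the entire argument is a bookkeeping consequence of Deligne's classification and the identification $\theta_X=z|_X$. The only mildly delicate step is verifying that $\C_1$ has Frobenius--Perron dimension exactly half of that of $\C$, which is immediate once one identifies $\C_1$ with $\Rep(G/\la z\ra)$ via the quotient by the order-two subgroup $\la z\ra$.
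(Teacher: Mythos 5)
Your proposal is correct and is exactly the paper's argument: the paper also deduces $z=1$ from the oddness of $|G|$ for (i) and takes $\C_1=\Rep(G/\langle z\rangle)\subset\Rep(G)$ for (ii), merely stating these steps more tersely. Your write-up just fills in the routine verifications.
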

\begin{proof} As for (i), it is clear that $z=1$. For (ii) one takes $\C_1=Rep(G/\langle z\rangle)
\subset \Rep(G)$.
\end{proof}

\section{Isotropic subcategories and Brugui\`{e}res-M\"uger modularization}\label{isotsub}
\label{Sect3}

\subsection{Modularization}
\label{subsect modularization}

\begin{definition}
\label{def:isotropic} Let $\C$ be a premodular category with
braiding $c$ and twist $\theta$. A fusion subcategory  $\E$ of
$\C$ is called {\em isotropic} if $\theta$ restricts to the
identity on $\E$, i.e., if $\theta_X =\id_X$ for all $X\in \E$. An
isotropic subcategory $\E$ is called {\em Lagrangian} if
$\mathcal{E}= \mathcal{E}'$. The category $\C$ is called {\em
hyperbolic} if it has a Lagrangian subcategory and
{\em anisotropic} if it has no non-trivial isotropic
subcategories.

\end{definition}

\begin{remark}\label{rmk}
\begin{enumerate}
\item[(a)] When $\C =\C(G,\, q)$ is a pointed modular category
defined in Example~\ref{CGQ} then isotropic and Lagrangian
subcategories of $\C$ correspond to isotropic and Lagrangian
subgroups of $(G,\, q)$, respectively. We discuss properties of
pointed modular categories in Section~\ref{modpcat}.
\item[(b)] Let $G$ be a finite group and let $\omega \in Z^3(G,\, k^{\times})$.
Consider the pointed fusion category $\Vec_G^\omega$. Its center
$\C = \Z(\Vec_G^\omega)$ is a modular category.  It contains a
Lagrangian subcategory $\E \cong \Rep(G)$ formed  by all objects
in $\C$ which are sent to multiples of the unit object of
$\Vec_G^\omega$ by the forgetful functor $\Z(\Vec_G^\omega) \to
\Vec_G^\omega$.
\item[(c)]  It follows from the balancing axiom~\eqref{balancing ax}
that an  isotropic subcategory $\E \subseteq \C$ is always
symmetric. Conversely, if $\E$ is symmetric and $\dim(\E)$ is odd
then $\E$ is isotropic, see \ref{Deligne}. In particular, if $\dim(\C)$ is odd then
any symmetric subcategory of $\C$ is isotropic.
\item[(d)] Recall that we assume that $\C$ is endowed with a canonical spherical
structure, see \ref{note: sph when k=C}. Any isotropic subcategory
$\E \subset \C$ is equivalent, as a symmetric  category, to
$\Rep(G)$ for a canonically defined group $G$ with  its standard
braiding and identical twist, see \ref{Deligne}. In particular, if
$\E$ is Lagrangian then $\dim(\C)=\dim(\E)^2$ is a square of an
integer.
\end{enumerate}
\end{remark}

Let $\C$ be a premodular category such that its centralizer
$\C'$ is isotropic and dimensions of all objects $X\in \C'$ are non-negative.
Let us recall a construction, due
to A.~ Brugui\`{e}res \cite{Br} and M.~M\"uger \cite{Mu1}, which
associates to $\C$ a modular category  $\bar \C$ and a surjective
braided tensor functor $\C \to \bar \C$.

%

Let  $G(\C)$ be the unique (up to an isomorphism) group such that
the category $\C'$ is equivalent, as a premodular category, to
$\Rep(G(\C))$ with  its standard symmetric braiding and identity
twist.

Let $A$ be the algebra of functions on $G(\C)$. The group $G(\C)$
acts on $A$ via left translations and so $A$ is a commutative
algebra in $\C'$ and hence in $\C$.

Consider the category $\bar \C := \C_A$ of right $A$-modules in the
category $\C$ (see, e.g., \cite[1.2]{KiO}).  It was shown in
\cite{Br, KiO, Mu1} that $\bar \C$ is a braided fusion category and
that the ``free module'' functor
\begin{equation}
\label{free module functor} F: \C \to \bar \C,\, X\mapsto X\otimes
A
\end{equation}
is surjective and has a canonical structure
of a braided tensor functor. One can define a twist $\phi$ on $\bar\C$
in such a way that $\phi_Y =\theta_X$
for all $Y\in \O(\bar\C)$ and $X\in \O(\C)$ for which $\Hom_\C(X, Y)\neq 0$.
It follows that the category $\bar\C$ is modular, see \cite{Br, Mu1, KiO} for details.
We will call the category $\bar\C$ a {\em modularization} of $\C$.

Let $d$ and $\bar{d}$ denote the dimension functions in $\C$ and
$\bar\C$, respectively.
For any object $X$ in $\bar\C$ one has
\begin{equation}
\label{dims}
\bar{d}(X) = \frac{d(X)}{d(A)},
\end{equation}
cf.\ \cite[Theorem 3.5]{KiO}, \cite[Proposition 3.7]{Br}.

\begin{remark}\label{newrmk}
Let $\E$ be an isotropic subcategory of a modular category $\C$.
Then $\dim(\bar{\E'}) = \dim(\C)/\dim(\E)^2$ (see e.g.
\cite{KiO}).
\end{remark}

\subsection{Invariance of the central charge}

In this subsection we prove an invariance property of the central
charge with respect to modularization, which will be crucial in
the sequel.

\begin{theorem}
\label{tau after mod}
Let $\C$ be a modular category and let $\E$ be an isotropic
subcategory of $\C$. Let $F: \E' \to \bar{\E'}$ be the canonical
braided tensor functor from $\E'$
to its modularization. Then $\xi(\bar{\E'}) = \xi(\C)$.
\end{theorem}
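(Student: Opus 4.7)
The plan is to establish two matching rescalings of the Gauss sum under the transitions $\C\rightsquigarrow\E'\rightsquigarrow\bar{\E'}$ and combine them with the dimension formula $\dim(\bar{\E'})=\dim(\C)/\dim(\E)^2$ (a consequence of \eqref{dimK dimK'} and Remark~\ref{newrmk}). Specifically I would prove (a) $\tau^+(\E')=\tau^+(\C)$, where the left side uses the dimensions and twists of objects of $\E'$ inherited from $\C$, and (b) $\tau^+(\bar{\E'})=\tau^+(\E')/\dim(\E)$. Together these give
\[
\xi(\bar{\E'})=\frac{\tau^+(\bar{\E'})}{\sqrt{\dim(\bar{\E'})}}=\frac{\tau^+(\C)/\dim(\E)}{\sqrt{\dim(\C)}/\dim(\E)}=\xi(\C).
\]

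For (a) the starting point is the identity
\[
\sum_{X\in\O(\C)}\theta_X\,d(X)\,s_{XY}=\theta_Y^{-1}d(Y)\,\tau^+(\C),\qquad Y\in\O(\C),
\]
obtained by plugging \eqref{sij} into the left-hand side and using $\sum_X d(X)N_{XY}^Z=d(Y)d(Z)$ (which itself comes from $d(Z\ot Y^*)=d(Z)d(Y)$). Specialize this identity to $Y\in\O(\E)$ (where $\theta_Y=1$ by isotropy), multiply by $d(Y)$, and sum over $\O(\E)$; the right-hand side collapses to $\dim(\E)\tau^+(\C)$. For the left-hand side I would establish the auxiliary identity that $\sum_{Y\in\O(\E)}d(Y)s_{XY}$ equals $\dim(\E)d(X)$ if $X\in\E'$ and vanishes otherwise. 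The first case is immediate from $s_{XY}=d(X)d(Y)$ on $\E'$. The vanishing on the complement uses the Verlinde formula: each map $[Y]\mapsto s_{XY}/d(X)$ is a ring homomorphism $K_0(\C)\to k$, and under the Deligne identification $\E\simeq\Rep(G(\E))$ from \ref{Deligne} its restriction to $K_0(\E)$ is evaluation at some $g\in G(\E)$; the sum in question becomes $\Tr(g\,|\,k[G(\E)])$, which vanishes unless $g=e$, and $g=e$ is precisely the condition $X\in\E'$. Combining these two identities yields $\dim(\E)\tau^+(\E')=\dim(\E)\tau^+(\C)$.

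For (b) I would use the construction recalled in Subsection~\ref{subsect modularization}. Every simple $Y\in \O(\bar{\E'})$ satisfies $\phi_Y=\theta_X$ for every simple $X\in\O(\E')$ with $[F(X):Y]>0$. The dimension formula \eqref{dims} combined with $d(A)=\dim(\E)$ gives $\bar{d}(F(X))=d(X)$ for $X\in\O(\E')$, and dually $\sum_{X\in\O(\E')}[F(X):Y]\,d(X)=\dim(\E)\bar{d}(Y)$. Expanding $\tau^+(\bar{\E'})=\sum_Y\phi_Y\bar{d}(Y)^2$, rewriting one factor $\bar{d}(Y)$ via the dual relation, swapping the order of summation, and using $\phi_Y=\theta_X$ throughout the support of $[F(X):Y]$ collapses the expression to $(1/\dim(\E))\sum_{X\in\O(\E')}\theta_X d(X)^2=\tau^+(\E')/\dim(\E)$.

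The main obstacle is the auxiliary vanishing identity in (a). This is where Deligne's classification of symmetric fusion categories and the identification $\E\simeq\Rep(G(\E))$ become essential: without this structural input the restricted modular characters could not be identified with class functions on $G(\E)$, and the standard vanishing of $\Tr(g\,|\,k[G(\E)])$ for $g\neq e$ could not be invoked to cut the sum down to $\O(\E')$.
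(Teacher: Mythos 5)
Your proposal is correct, and your step (b) is essentially the paper's entire computation: expand $\dim(\E)\tau^{\pm}(\bar{\E'})$ using \eqref{dims}, decompose each simple $A$-module over simples of $\C$, swap the order of summation, and use $\phi_Y=\theta_X$ together with the adjunction between $F$ and the forgetful functor. The genuine difference is your step (a). In the paper's chain of equalities the outer sum is written over $\O(\C)$, but since $\Hom_\C(X,Y)=0$ whenever $X\notin\O(\E')$ and $Y\in\O(\bar{\E'})$, that chain by itself only yields $\dim(\E)\tau^{\pm}(\bar{\E'})=\sum_{X\in\O(\E')}\theta_X^{\pm1}d(X)^2=\dim(\E)\,\tau^{\pm}(\E')$; the concluding identification with $\tau^{\pm}(\C)$ is exactly the assertion $\tau^{\pm}(\E')=\tau^{\pm}(\C)$, which the printed proof passes over without separate justification. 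You isolate this as a lemma and prove it via the standard modular identity $\sum_{X}\theta_X d(X)s_{XY}=\theta_Y^{-1}d(Y)\tau^{+}(\C)$ (a consequence of \eqref{sij} and $\sum_X N_{XY}^Z d(X)=d(Y)d(Z)$) combined with the character-theoretic vanishing of $\sum_{Y\in\O(\E)}d(Y)s_{XY}$ for $X\notin\E'$, the latter resting on Deligne's identification $\E\simeq\Rep(G)$ and M\"uger's criterion that $s_{XY}=d(X)d(Y)$ exactly when $X$ and $Y$ centralize each other. So your route is the paper's computation plus one extra, correctly proved ingredient that makes the argument airtight; the cost is invoking the $ST$-relation-type identity and orthogonality of characters, neither of which appears in the paper's version. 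Both arguments finish identically, dividing by the dimension count $\dim(\bar{\E'})=\dim(\C)/\dim(\E)^2$ from Remark~\ref{newrmk}.
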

\begin{proof}
Let $A$ be the canonical commutative algebra in $\E$. We have
$\dim(\E)= d(A)$. By definition,   $\bar{\E'}$ is the category of
left $A$-modules in $\E'$.

Let us compute the Gauss sums of  $\bar{\E'}$:
\begin{eqnarray*}
\lefteqn{ \dim(\E)\tau^\pm(\bar{\E'}) =  \dim(\E)\sum_{Y\in\O(\bar{\E'})}\,\phi^{\pm 1}_Y \bar{d}(Y)^2=}\\
&=&  \sum_{Y\in\O(\bar{\E'})}\, \phi^{\pm 1}_Y  d(Y) \bar{d}(Y) \\
&=& \sum_{Y\in\O(\bar{\E'})}\, \phi^{\pm 1}_Y  \left( \sum_{X\in\O(\C)}\,
     \dim_k\Hom_\C(X,\, Y) d(X) \right) \bar{d}(Y) \\
&=& \sum_{X\in\O(\C)}\, \theta^{\pm 1}_X d(X)
    \left( \sum_{Y\in\O(\bar{\E'})}\,
    \dim_k\Hom_\C(X,\, Y) \bar{d}(Y) \right) \\
&=&  \sum_{X\in\O(\C)}\, \theta^{\pm 1}_X d(X)
     \left( \sum_{Y\in\O(\bar{\E'})}\,
     \dim_k\Hom_{\bar{\E'}}(X \ot A,\, Y) \bar{d}(Y) \right) \\
&=&  \sum_{X\in\O(\C)}\, \theta^{\pm 1}_X d(X)
     \bar{d} (F(X)) = \tau^\pm(\C), 
\end{eqnarray*}
where we used the relation \eqref{dims}
and the fact that $F$ is an  adjoint of the forgetful functor from $\bar{\E'}$ to $\E'$.


Combining this with the equation $\dim(\bar{\E'}) =
\dim(\C)/\dim(\E)^2$ (see Remark~\ref{newrmk}) we obtain the
result.
\end{proof}

\subsection{Maximal isotropic subcategories}

Let $\C$ be a modular category and let  $\L$ be an isotropic
subcategory of $\C$ which is maximal among isotropic subcategories
of~$\C$. Below we will show that the braided equivalence class of the modular
category  $\bar{\L'}$ (the modularization of $\L'$ by $\L$) is
independent  of the choice of $\L$.

Let $\C$ be a fusion category and let $\A$ and $\B$ be its fusion
subcategories such that $X \ot Y \cong Y\ot X$ for all $X\in\O(\A)$
and $Y\in \O(\B)$.
Let $\A\vee \B$ denote the fusion subcategory of $\C$  generated
by $\A$ and $\B$, i.e.,
consisting of   all subobjects of $X\ot Y$, where $X\in \O(\A)$
and $Y \in \O(\B)$. Recall that the regular element of
$K_0(\C)\otimes_\mathbb{Z} \mathbb{C}$ is
$R_\C =\sum_{X\in \O(\C)}\, d(X) X$.
It is defined up to a scalar multiple  by the property that
$Y \otimes R_\C  = d(Y) R_\C$ for all $Y\in\O(\C)$ \cite{ENO}.

\begin{lemma}
\label{diamond dimensions}
Let $\C,\,\A,\,\B$ be as above. Then
$
\dim(\A \vee \B) =\frac{\dim(\A)\dim(\B)}{\dim(\A\cap\B)}.
$
\end{lemma}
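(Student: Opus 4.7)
The plan is to prove the identity
\[
R_\A \otimes R_\B = \dim(\A \cap \B) \cdot R_{\A \vee \B}
\]
in $K_0(\A \vee \B) \otimes_\BZ \CC$, from which the lemma follows by applying the dimension homomorphism $\phi : K_0(\A \vee \B) \otimes_\BZ \CC \to \CC$ extending $X \mapsto d(X)$. Indeed, $\phi(R_\D) = \sum_{X \in \O(\D)} d(X)^2 = \dim(\D)$ for any fusion subcategory $\D$, so applying $\phi$ to both sides yields $\dim(\A)\dim(\B) = \dim(\A \cap \B)\dim(\A \vee \B)$.

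To prove the identity, first I would show that $R_\A \otimes R_\B$ is proportional to $R_{\A \vee \B}$. The hypothesis $X \otimes Y \cong Y \otimes X$ for simple $X \in \O(\A),\ Y \in \O(\B)$ gives $Y \otimes R_\A = R_\A \otimes Y$ in $K_0$; combined with $W \otimes R_\A = d(W) R_\A$ for $W \in \O(\A)$ and the analogous identity for $R_\B$, this yields $W \otimes (R_\A \otimes R_\B) = d(W)(R_\A \otimes R_\B)$ for every $W \in \O(\A) \cup \O(\B)$. Set $Z_0 := \sum_{W \in \O(\A) \cup \O(\B)} W$. Left multiplication by $Z_0$ on $K_0(\A \vee \B) \otimes \CC$ is then a non-negative operator, and it is irreducible because every simple $Z \in \O(\A \vee \B)$ is a summand of some iterated tensor product of simples in $\O(\A) \cup \O(\B)$, so sufficiently high powers of $Z_0$ have positive coefficient on every simple (duality in $\A$ and $\B$ allows one to travel from any $Z$ back to $\be$ and then out to any $Z'$). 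By the Perron-Frobenius theorem, the top eigenspace is one-dimensional and spanned by the unique positive eigenvector $R_{\A \vee \B}$, with eigenvalue $d(Z_0) = \FPdim(Z_0)$. Since $R_\A \otimes R_\B$ is itself a strictly positive eigenvector (its coefficient on $Z$ is $\sum_{X,Y} d(X)d(Y) N_{XY}^Z > 0$ for every $Z \in \O(\A \vee \B)$) with the same eigenvalue, we conclude $R_\A \otimes R_\B = c\, R_{\A \vee \B}$ for some $c > 0$.

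To pin down $c$, I would compare the coefficient of $\be$ on both sides. For simple $X \in \O(\A)$ and $Y \in \O(\B)$ one has $\dim \Hom(\be, X \otimes Y) = \dim \Hom(X^*, Y) = \delta_{Y, X^*}$; since $\B$ is closed under duals, $X^* \in \B$ is equivalent to $X \in \B$. Hence the coefficient of $\be$ in $R_\A \otimes R_\B$ equals $\sum_{X \in \O(\A \cap \B)} d(X) d(X^*) = \dim(\A \cap \B)$, while the coefficient of $\be$ in $R_{\A \vee \B}$ equals $d(\be) = 1$. Therefore $c = \dim(\A \cap \B)$, establishing the identity.

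The main obstacle is the proportionality step, which rests on a Perron-Frobenius uniqueness argument. The trick of collapsing all simples of $\O(\A) \cup \O(\B)$ into the single element $Z_0$ ensures that the associated left-multiplication matrix is irreducible on $K_0(\A \vee \B) \otimes \CC$, so classical Perron-Frobenius applies directly without invoking any extra structure (such as semisimplicity) on the Grothendieck ring.
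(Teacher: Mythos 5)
Your proof is correct and follows the paper's argument exactly: establish $R_\A\otimes R_\B = \dim(\A\cap\B)\,R_{\A\vee\B}$ by identifying the scalar with the coefficient of $\be$, then apply the dimension character to both sides. The only difference is that you spell out, via a Perron--Frobenius irreducibility argument, the proportionality $R_\A\otimes R_\B \propto R_{\A\vee\B}$ that the paper dismisses as ``easy to see,'' relying on the stated uniqueness of the regular element.
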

\begin{proof}
It is easy to see that
\begin{equation}
\label{diamond} R_\A \otimes R_\B =a R_{\A\vee \B},
\end{equation}
where the scalar $a$ is equal to the multiplicity of the unit
object $\mathbf{1}$ in $R_\A \ot R_\B$, which is the same as the
multiplicity of $\mathbf{1}$ in $\sum_{Z\in \O(\A\cap\B)}\, d(Z)^2
Z \otimes Z^*$. Hence, $a = \dim(\A\cap\B)$. Taking dimensions of
both sides of \eqref{diamond} we get the result.
\end{proof}

Let $L(\C)$  denote the lattice of fusion subcategories of a
fusion category $\C$. For any two subcategories $\mathcal{A}$ and
$\mathcal{B}$ their meet is their intersection and their joint is
the category $\mathcal{A}\vee \mathcal{B}$.

\begin{lemma}
\label{Dedekind}
Let $\C$ be a fusion category such that $X \ot Y \cong Y\ot X$ for
all objects $X,Y$ in $\C$. For all   $\mathcal{A},\, \mathcal{B},\, \D\in L(\C)$
such that  $\D\subseteq\mathcal{A}$ the following {\em modular law} holds true:
\begin{equation}
\label{modular law}
\mathcal{A} \cap (\mathcal{B} \vee \D)  = (\mathcal{A} \cap \mathcal{B}) \vee \D.
\end{equation}
\end{lemma}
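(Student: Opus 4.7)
The plan is to establish the non-trivial inclusion by a dimension count, leveraging Lemma~\ref{diamond dimensions} (the diamond formula). First I would note that a fusion subcategory of $\C$ is determined by the set of simple objects it contains, so any inclusion of fusion subcategories is an equality as soon as the two sides have the same categorical dimension.

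The inclusion $(\mathcal{A} \cap \mathcal{B}) \vee \D \subseteq \mathcal{A} \cap (\mathcal{B} \vee \D)$ is immediate: the left-hand side is contained in $\mathcal{A}$ (since $\mathcal{A}\cap\mathcal{B}\subseteq\mathcal{A}$ and $\D\subseteq\mathcal{A}$) and in $\mathcal{B}\vee\D$ (since $\mathcal{A}\cap\mathcal{B}\subseteq\mathcal{B}$ and $\D\subseteq\D$). So it remains to show that both sides have equal dimension. For the right-hand side, Lemma~\ref{diamond dimensions} gives
\[
\dim((\mathcal{A}\cap\mathcal{B}) \vee \D) \;=\; \frac{\dim(\mathcal{A}\cap\mathcal{B})\,\dim(\D)}{\dim((\mathcal{A}\cap\mathcal{B})\cap \D)} \;=\; \frac{\dim(\mathcal{A}\cap\mathcal{B})\,\dim(\D)}{\dim(\mathcal{B}\cap \D)},
\]
where the last equality uses $\D\subseteq\mathcal{A}$ to rewrite $(\mathcal{A}\cap\mathcal{B})\cap\D = \mathcal{B}\cap\D$.

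For the left-hand side I would apply Lemma~\ref{diamond dimensions} to the pair $\mathcal{A}$, $\mathcal{B}\vee\D$. Since $\D\subseteq\mathcal{A}$ we have $\mathcal{A}\vee(\mathcal{B}\vee\D)=\mathcal{A}\vee\mathcal{B}$, so
\[
\dim(\mathcal{A}\cap(\mathcal{B}\vee \D)) \;=\; \frac{\dim(\mathcal{A})\,\dim(\mathcal{B}\vee \D)}{\dim(\mathcal{A}\vee\mathcal{B})}.
\]
Expanding $\dim(\mathcal{B}\vee\D)=\dim(\mathcal{B})\dim(\D)/\dim(\mathcal{B}\cap\D)$ and $\dim(\mathcal{A}\vee\mathcal{B})=\dim(\mathcal{A})\dim(\mathcal{B})/\dim(\mathcal{A}\cap\mathcal{B})$ by Lemma~\ref{diamond dimensions}, the factors of $\dim(\mathcal{A})$ and $\dim(\mathcal{B})$ cancel and one is left with
\[
\dim(\mathcal{A}\cap(\mathcal{B}\vee \D)) \;=\; \frac{\dim(\mathcal{A}\cap\mathcal{B})\,\dim(\D)}{\dim(\mathcal{B}\cap\D)},
\]
matching the right-hand side. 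Combined with the trivial inclusion, this forces the equality~\eqref{modular law}.

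There is no real obstacle: the hypothesis that all simple objects of $\C$ commute guarantees that Lemma~\ref{diamond dimensions} applies to every pair of subcategories appearing above, and the condition $\D\subseteq\mathcal{A}$ is used precisely twice (to reduce $(\mathcal{A}\cap\mathcal{B})\cap\D$ to $\mathcal{B}\cap\D$, and to identify $\mathcal{A}\vee\mathcal{B}\vee\D$ with $\mathcal{A}\vee\mathcal{B}$). The only care needed is bookkeeping with the diamond formula.
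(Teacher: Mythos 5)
Your argument is correct, but it is genuinely different from the one in the paper. The paper invokes the classical lattice-theoretic fact (due to Dedekind) that the modular law is equivalent to the cancellation property ``$\mathcal{A}\cap\mathcal{B}=\D\cap\mathcal{B}$ and $\mathcal{A}\vee\mathcal{B}=\D\vee\mathcal{B}$ with $\D\subseteq\mathcal{A}$ imply $\mathcal{A}=\D$,'' and then verifies that property by a purely object-theoretic duality argument: a simple $X\in\mathcal{A}$ lies in $\D\vee\mathcal{B}$, hence sits inside some $D\ot B$, hence $B\subseteq D^*\ot X$ lies in $\mathcal{A}\cap\mathcal{B}=\D\cap\mathcal{B}\subseteq\D$, forcing $X\in\D$. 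Your route instead establishes the easy inclusion and then matches categorical dimensions on both sides via three applications of Lemma~\ref{diamond dimensions}; the bookkeeping you carry out is accurate, and the identifications $(\mathcal{A}\cap\mathcal{B})\cap\D=\mathcal{B}\cap\D$ and $\mathcal{A}\vee\mathcal{B}\vee\D=\mathcal{A}\vee\mathcal{B}$ are exactly where $\D\subseteq\mathcal{A}$ enters. The trade-off: your proof is shorter and purely numerical, but it leans on the fact that a proper inclusion of fusion subcategories strictly decreases dimension, i.e.\ on the positivity of $d(X)^2$ for simple $X$ (which holds by \cite{ENO}, and in any case in the pseudo-unitary setting the paper works in); the paper's proof needs no dimension function at all, only rigidity, and so is slightly more robust. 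Both are legitimate proofs of the lemma.
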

\begin{proof}
A classical theorem of Dedekind in lattice theory states that
\eqref{modular law} is equivalent to the following statement: for
all $\mathcal{A},\, \mathcal{B},\, \D\in L(\C)$ such that
$\D\subseteq\mathcal{A}$, if $\mathcal{A}\cap \mathcal{B} = \D
\cap \mathcal{B}$ and $\mathcal{A}\vee \mathcal{B} = \D \vee
\mathcal{B}$ then $\mathcal{A} =\D$ (see e.g., \cite{MMT}).

Let us prove the latter property.
Take a simple  object  $X\in\mathcal{A}$. Then $X \in \mathcal{A}\vee \mathcal{B} =
\D \vee \mathcal{B}$  so there are simple objects $D\in \D$
and $B\in \mathcal{B}$ such that
$X$ is contained in  $D \ot B$. Therefore, $B$ is contained in $D^*\ot X$
and so $B\in \mathcal{A}$. So $B\in \mathcal{A}\cap \mathcal{B} =
\mathcal{D}\cap \mathcal{B} \subseteq \D$. Hence  $X\in \D$, as required.
\end{proof}

\begin{remark}
When $\C =\Rep(G)$ is the representation category of a finite group $G$,
Lemma~\ref{Dedekind} gives a well-known property of the lattice of
normal subgroups of $G$.
\end{remark}

The next lemma gives an analogue of a diamond isomorphism for the
``quotients by isotropic subcategories.''

\begin{lemma}
\label{diamond isomorphism} Let $\C$ be a modular category, let
$\D$ be an isotropic subcategory of $\C$ and let $\mathcal{B}$  be
a subcategory of $\D'$. Let $A$, $A_0$ be the canonical
commutative algebras in $\D$ and $\D \cap \B$, respectively.

Then the category $\B_{A_0}$ of
$A_0$-modules in $\B$ and  the category
$(\D\vee\B)_A$ of $A$-modules in $\D\vee\B$ are equivalent as braided
tensor categories.
\end{lemma}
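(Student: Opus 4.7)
The plan is to exhibit the claimed equivalence as extension of scalars along the natural inclusion $A_0 \hookrightarrow A$. Since $\D$ is isotropic and $\B \subseteq \D'$, the intersection $\D \cap \B$ is a symmetric (hence Tannakian) fusion subcategory of $\D \cong \Rep(G(\D))$, and by Deligne's theorem corresponds to a surjection $G(\D) \twoheadrightarrow G(\D \cap \B)$; pulling back functions yields an injective homomorphism of commutative algebras $A_0 \hookrightarrow A$ living in $\D \cap \B \subseteq \B$. The proposed functor is
\[
\Phi: \B_{A_0} \longrightarrow (\D\vee\B)_A, \qquad \Phi(M) = M \otimes_{A_0} A,
\]
which is well-defined because $M \in \B \subseteq \D\vee\B$, $A \in \D \subseteq \D\vee\B$, and all braidings used in forming $\otimes_{A_0}$ and $\otimes_A$ are canonical thanks to $\B \subseteq \D'$.

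Checking that $\Phi$ is a braided tensor functor is routine: the canonical isomorphism $\Phi(M) \otimes_A \Phi(N) \cong \Phi(M \otimes_{A_0} N)$ is the standard identity for extension of scalars along a commutative algebra map, and its compatibility with the braidings, associators, and unit constraints all reduce to the triviality of $c_{-,A}c_{A,-}$ and $c_{-,A_0}c_{A_0,-}$ on objects of $\B$, which is precisely the hypothesis $\B \subseteq \D'$.

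For full faithfulness I would invoke Frobenius reciprocity:
\[
\Hom_{(\D\vee\B)_A}\bigl(\Phi(M), \Phi(N)\bigr) \;\cong\; \Hom^{\D\vee\B}_{A_0}\bigl(M,\, N \otimes_{A_0} A\bigr).
\]
Using semisimplicity of $\D$, fix a splitting $A = A_0 \oplus A''$ of $A_0$-modules in $\D$, where the Peter--Weyl decomposition identifies $A''$ as a direct sum of blocks $V \otimes V^*$ for simple objects $V \in \D$ \emph{not} lying in $\D \cap \B$. The right-hand side then splits as $\Hom_{\B_{A_0}}(M,N) \oplus \Hom^{\D\vee\B}_{A_0}(M, N \otimes_{A_0} A'')$. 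The second summand vanishes: by the rigidity adjunction, a morphism $M \to N \otimes V$ in $\C$ is a morphism $M \otimes N^* \to V$, and since $M \otimes N^* \in \B$ while $V$ is a simple of $\D$ not belonging to $\B$, no such nonzero morphism exists.

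For essential surjectivity, any simple $P \in (\D\vee\B)_A$ embeds in a free module $X \otimes A$ for some $X \in \D\vee\B$, and by definition of $\D\vee\B$ one has $X \subseteq D \otimes B$ for some $D \in \D$, $B \in \B$. Then
\[
X \otimes A \;\subseteq\; D \otimes B \otimes A \;\cong\; (D \otimes A) \otimes B
\]
using $c_{B,A} c_{A,B} = \id$; since $\D_A \cong \Vec$, the $A$-module $D \otimes A$ is isomorphic to $d(D)$ copies of $A$ in $\D$, so $P$ embeds in a finite direct sum of $B \otimes A = \Phi(B \otimes A_0)$. Combined with fully faithfulness this places $P$ in the essential image of $\Phi$. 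The main delicate point is the careful bookkeeping of braidings when forming $\otimes_{A_0}$ and $\otimes_A$ in the hybrid category $\D\vee\B$; once $\B \subseteq \D'$ is used to trivialize the relevant commutativity constraints, both the tensor-functor structure and the Hom computation become straightforward.
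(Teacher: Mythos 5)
Your proof is correct, and it uses the same functor as the paper ($M\mapsto M\otimes_{A_0}A$, extension of scalars along $A_0\hookrightarrow A$), but it establishes the equivalence by a genuinely different route. The paper first computes, via the earlier lemma $\dim(\A\vee\B)=\dim(\A)\dim(\B)/\dim(\A\cap\B)$, that $\dim(\B_{A_0})=\dim(\B)/\dim(\D\cap\B)=\dim(\D\vee\B)/\dim(\D)=\dim((\D\vee\B)_A)$; it then observes that the composition of $H$ with the free $A_0$-module functor is the free $A$-module functor, hence $H$ is surjective, and concludes by the general principle that a surjective tensor functor between fusion categories of equal dimension is an equivalence (\cite[5.7]{ENO}, \cite[Proposition 2.20]{EO}). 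You instead verify fully faithfulness and essential surjectivity by hand: the adjunction $\Hom_A(M\otimes_{A_0}A,N\otimes_{A_0}A)\cong\Hom_{A_0}(M,N\otimes_{A_0}A)$ together with the $A_0$-module splitting $A=A_0\oplus A''$, where the simple constituents of $A''$ are exactly the simples of $\D$ not in $\D\cap\B$, so that $\Hom(N^*\otimes M,V)=0$ kills the second summand; and for essential surjectivity you embed a simple $A$-module into a free one and use $\D_A\cong\Vec$ to reduce to modules of the form $B\otimes A=\Phi(B\otimes A_0)$. Both arguments are sound. The paper's is shorter because it reuses machinery it needs anyway (the dimension lemma and the surjectivity-implies-equivalence criterion); yours is more self-contained, avoids the dimension count entirely, and makes explicit exactly where the hypotheses $\B\subseteq\D'$ (trivializing the double braidings) and the isotropy of $\D$ (Deligne's theorem, Peter--Weyl) enter. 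The only point worth polishing is the Peter--Weyl step: one should say explicitly that in the blocks $V\otimes V^*$ the factor $V^*$ is a multiplicity space (so that as an object of $\D$ the complement $A''$ is $\bigoplus V^{\oplus\dim V}$ over simples $V\notin\D\cap\B$), which is what reduces the vanishing to $\Hom(N^*\otimes M,V)=0$ for such $V$; and that the essential image of a fully faithful additive functor between semisimple categories is closed under taking simple constituents because $\End(\Phi(M_i))=\End(M_i)=k$ forces $\Phi(M_i)$ to be simple. Neither is a gap, just bookkeeping.
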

\begin{proof}
Note that
\[
\dim(\B_{A_0}) = \frac{\dim(\B)}{\dim(\D \cap \B)}
=\frac{\dim(\D\vee\B)}{\dim(\D)} = \dim((\D\vee\B)_A)
\]
by Lemma~\ref{diamond dimensions}.

Define a  functor $H: \B_{A_0} \to (\D\vee\B)_A$ by $H(X) = X
\otimes_{A_0} A ,\, X \in \B_{A_0}$. Then $H$ has a natural
structure of a braided tensor functor. Note that for $X = Y\ot
A_0,\, Y\in \B$ we have $H(X) = Y\ot A $, i.e., the composition of
$H$ with the free $A_0$-module functor is the free $A$-module
functor. The latter functor is surjective and, hence, so is $H$.

Since a surjective functor between categories of equal
dimension is necessarily an equivalence (see \cite[5.7]{ENO}
or \cite[Proposition 2.20]{EO}) the result follows.
\end{proof}

\begin{proposition}
\label{canonical modularization} Let $C$ be a modular category and
let $\L_1$, $\L_2$ be maximal among isotropic subcategories of
$\mathcal{C}$. Then the modularization $\bar{\L_1'}$ and
$\bar{\L_2'}$ are equivalent as braided fusion categories.
\end{proposition}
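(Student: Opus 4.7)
The plan is to construct a common modular category to which both $\overline{\L_1'}$ and $\overline{\L_2'}$ are braided equivalent, using the diamond isomorphism (Lemma~\ref{diamond isomorphism}) as the central tool. Set $\K = \L_1 \cap \L_2$ and $\M = \L_1' \cap \L_2'$. The target will be the modularization $\M_{A_\K}$ of $\M$ by the canonical algebra of~$\K$.

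First I would extract the consequences of maximality: I claim $\L_1 \cap \L_2' = \K = \L_1' \cap \L_2$. For a simple $X$ in $\L_1' \cap \L_2$, the subcategory $\L_1 \vee \langle X \rangle$ is isotropic, because $X$ centralizes $\L_1$ (as $X \in \L_1'$) and both $\langle X \rangle$ and $\L_1$ are isotropic, so the balancing axiom forces the twist on every simple of the join to be the identity. By maximality of $\L_1$, $X \in \L_1$, hence $X \in \K$. It follows that $\L_i \cap \M = \K$ for $i = 1, 2$.

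Now I would apply Lemma~\ref{diamond isomorphism} with $\D = \L_i$ and $\B = \M$ (noting $\M \subseteq \L_i'$) to obtain braided equivalences $\M_{A_\K} \cong (\L_i \vee \M)_{A_{\L_i}}$ for $i = 1, 2$. The key technical step is then to identify $\L_i \vee \M$ with all of $\L_i'$. For this I would pass to centralizers: $(\L_i \vee \M)' = \L_i' \cap \M'$, and $\M' = \L_1 \vee \L_2$ by the standard identity $(\A \cap \B)' = \A' \vee \B'$ in a modular category. Applying the Dedekind modular law (Lemma~\ref{Dedekind}) with $\A = \L_i'$, $\D = \L_i \subseteq \A$, and $\B = \L_j$ (the other index) gives $\L_i' \cap (\L_j \vee \L_i) = (\L_i' \cap \L_j) \vee \L_i = \K \vee \L_i = \L_i$. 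Thus $(\L_i \vee \M)' = \L_i$, which by~\eqref{dimK dimK'} yields $\dim(\L_i \vee \M) = \dim(\C)/\dim(\L_i) = \dim(\L_i')$. Since $\L_i \vee \M \subseteq \L_i'$ is an inclusion of fusion subcategories of equal dimension, we conclude $\L_i \vee \M = \L_i'$.

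Putting everything together, $\overline{\L_i'} = (\L_i')_{A_{\L_i}} = (\L_i \vee \M)_{A_{\L_i}} \cong \M_{A_\K}$ for each $i$, so $\overline{\L_1'} \cong \M_{A_\K} \cong \overline{\L_2'}$ as braided fusion categories. The main subtlety is the identification $\L_i \vee \M = \L_i'$ above: one is tempted to use the diamond dimension formula (Lemma~\ref{diamond dimensions}), but it does not apply since $\L_i'$ and $\L_1 \vee \L_2'$ need not centralize each other, so the argument must instead proceed via centralizers and Dedekind as above. A pleasant by-product is that all maximal isotropic subcategories of $\C$ have the same Frobenius--Perron dimension.
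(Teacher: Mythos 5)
Your proof is correct and takes essentially the same route as the paper's: both reduce $\bar{\L_i'}$ to the category of modules over the canonical algebra of $\L_1\cap\L_2$ inside $\L_1'\cap\L_2'$ via Lemma~\ref{diamond isomorphism}, with the maximality inclusions $\L_i'\cap\L_j\subseteq\L_i$ and Dedekind's modular law (Lemma~\ref{Dedekind}) supplying the identification $\L_i\vee(\L_1'\cap\L_2')=\L_i'$. The only harmless variations are that you spell out the proof of the maximality inclusions (which the paper merely asserts) and establish that identification by passing to centralizers and comparing dimensions, where the paper computes $\L_1\vee(\L_1'\cap\L_2')=\L_1'\cap(\L_1\vee\L_2')=\L_1'$ directly.
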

\begin{proof}
Let $\D =\L_1$ and  $\B = \L_1' \cap \L_2'$.
By maximality of $\L_1, \L_2$ we have  $\L_1'\cap \L_2\subseteq \L_1$
and $\L_1\cap \L_2' \subseteq \L_2$.
Therefore,   $\D \cap \B =  \L_1\cap \L_2$  and
$\D \vee \B = \L_1' \cap (\L_1 \vee \L_2')= \L_1'$
by Lemma~\ref{Dedekind}.

Let $A_0$ be the canonical commutative algebra in $\L_1 \cap
\L_2$. Applying Lemma~\ref{diamond isomorphism} we see that
$\bar{\L_1'}$ is equivalent to the category $(\L_1' \cap
\L_2')_{A_0}$ of $A_0$-modules in $\L_1' \cap \L_2'$.  The
proposition now follows by interchanging $\L_1$ and $\L_2$.
\end{proof}

\begin{remark}
\begin{enumerate}
\item[(i)]
We can call the modular category $\bar{\L_1'}$ constructed
in  the proof of Proposition~\ref{canonical modularization}
``the'' {\em canonical  modularization} corresponding to $\C$
(it measures the failure of $\C$ to be hyperbolic).
The above proof gives
a concrete equivalence $\bar{\L_1'} \cong \bar{\L_2'}$. But
given another  maximal  isotropic subcategory $\L_3\subset  \C$
the composition of equivalences $\bar{\L_1'} \cong \bar{\L_2'}$ and
$\bar{\L_2'} \cong \bar{\L_3'}$ is {\em not} in general equal to
the equivalence $\bar{\L_1'} \cong \bar{\L_3'}$. This is why we
put ``the'' above in quotation marks.
\item[(ii)]
For a maximal isotropic subcategory $\L \subset \C$ the corresponding
modularization does {\em not} have to be anisotropic, in contrast
with the situation for metric groups. Examples illustrating this
phenomenon are, e.g.,   the centers of non-group theoretical Tambara-Yamagami
categories considered in \cite[Remark 8.48]{ENO}.
\end{enumerate}
\end{remark}


\section{Reconstruction of a twisted group double from a
Lagrangian subcategory}

\subsection{$\C$-algebras}
Let us recall the following definition
from \cite{KiO}.

\begin{definition}
\label{C-algebra} Let $\C$ be a ribbon fusion category. A  {\em
$\C-$algebra} is a commutative algebra $A$ in $\C$ such that $\dim
\Hom(\be ,A)=1$, the pairing $A\otimes A\to A\to \be$ given by the
multiplication of $A$ is non-degenerate, $\theta_A=\id_A$ and
$\dim(A)\ne 0$.
\end{definition}

Let $\C$ be a modular category, let $A$ be a $\C-$algebra, and let
$\C_A$ be the fusion category of right $A-$modules with the tensor
product $\ot_A$. The free module functor $F: \C \to \C_A,\, X
\mapsto X\ot A$ has an obvious structure of a {\em central
functor}. By this we mean that there is a natural family of
isomorphisms $F(X) \ot_A Y \cong Y \ot_A F(X),\, X\in \C,\, Y\in
\C_A$, satisfying an obvious multiplication compatibility, see
e.g. \cite[2.1]{Be}. Indeed, we have $F(X)=X\otimes A$, and hence
$F(X)\otimes_A Y = X\otimes Y$. Similarly, $Y\otimes_A
F(X)=Y\otimes X$. These two objects are isomorphic via the
braiding of $\C$ (one can  check that the braiding gives an
isomorphism of $A$-modules using the commutativity of $A$).

Thus, the functor $F$ extends to a functor $\tilde F: \C \to
\Z(\C_A)$ in such a way that  $F$ is the composition of
$\tilde{F}$ and the forgetful functor $\Z(\C_A) \to \C_A$.

\begin{proposition}
\label{tilde F}
The functor $\tilde F:\C \to \Z(\C_A)$ is injective
(that is fully faithful).
\end{proposition}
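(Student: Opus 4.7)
The plan is to establish faithfulness and fullness of $\tilde F$ separately.

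Faithfulness is immediate: the composition $F = (\text{forget}) \circ \tilde F$ sends $f$ to $f \otimes \id_A$, which restricted to $X \otimes \mathbf{1} \subseteq X \otimes A$ recovers $f$; hence $F$ (and therefore $\tilde F$) is faithful.

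For fullness, let $\phi : \tilde F(X) \to \tilde F(Y)$ be a morphism in $\Z(\C_A)$. Via the free-forgetful adjunction, the underlying morphism $F(X) \to F(Y)$ in $\C_A$ corresponds to a unique morphism $\tilde\phi : X \to Y \otimes A$ in $\C$. Decompose $A = \bigoplus_Z A_Z$ over simples $Z \in \O(\C)$ (with $A_{\mathbf{1}}$ a single copy of $\mathbf{1}$, since $\dim \Hom(\mathbf{1}, A) = 1$) and correspondingly $\tilde\phi = \sum_Z \tilde\phi_Z$ with $\tilde\phi_Z : X \to Y \otimes A_Z$. It suffices to show that the centrality of $\phi$ forces $\tilde\phi_Z = 0$ for each $Z \not\cong \mathbf{1}$: the component $\tilde\phi_{\mathbf{1}}$ would then provide an $f : X \to Y$ with $\tilde F(f) = \phi$.

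To translate centrality into a condition on $\tilde\phi$, I will test the identity $\gamma_Y(M) \circ (\phi \otimes_A \id_M) = (\id_M \otimes_A \phi) \circ \gamma_X(M)$ for $M = F(W)$ (since every $M \in \C_A$ is a summand of such a free module), using the identifications $F(X) \otimes_A M \cong X \otimes M$ and $M \otimes_A F(X) \cong M \otimes X$ together with $\gamma_X(M) = c_{X, M}$. After applying the hexagon identity and naturality of $c$, the equation reduces to a constraint relating $\tilde\phi$ to the double braiding $c_{A, Y} c_{Y, A}$ acting on $Y \otimes A$. The balancing axiom $\theta_{Y \otimes Z} = (\theta_Y \otimes \theta_Z) c_{Z, Y} c_{Y, Z}$, combined with $\theta_A = \id$ (so $\theta_Z = 1$ for every simple $Z \subseteq A$), translates this constraint into a condition of the form $\theta_W = \theta_Y$ for each simple summand $W \subseteq Y \otimes Z$ contained in the image of $\tilde\phi_Z$. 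Modularity of $\C$, together with the structure of $\C$-algebras, is then invoked to deduce $\tilde\phi_Z = 0$ for $Z \not\cong \mathbf{1}$. The main obstacle is the careful bookkeeping required to translate the centrality condition explicitly into this identity on $\tilde\phi$: the identifications $F(X) \otimes_A M \cong X \otimes M$ and $M \otimes_A F(X) \cong M \otimes X$ implicitly invoke $c$ and the commutativity of $A$, and one must apply the hexagons correctly to arrive at the clean double-braiding form from which modularity and the balancing axiom finish the argument.
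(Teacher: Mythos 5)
Your route is entirely different from the paper's. The paper never unwinds the centrality condition directly: it dualizes, showing that the functor $T:\C_A\boxtimes \C_A^{op}\to \C^*_{\C_A}$ dual to $\tilde F$ is surjective, by identifying the composite $\C\boxtimes\C^{op}\to\C_A\boxtimes\C_A^{op}\to\C^*_{\C_A}$ with the forgetful functor $\Z(\C^*_{\C_A})\to\C^*_{\C_A}$ (this identification is where modularity, via $\Z(\C)\simeq\C\boxtimes\C^{op}$, enters). Your direct approach is legitimate in principle: the faithfulness argument is fine, and reducing fullness to showing that centrality of $\phi$ forces $\tilde\phi_Z=0$ for all simple $Z\not\cong\be$ in $A$ is the right target.

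But the core of the argument is missing, and the endgame you sketch cannot be correct as stated. First, you never actually derive the constraint that centrality imposes on $\tilde\phi$ --- you announce that the hexagon and naturality ``reduce'' it to a double-braiding identity, but that is precisely where all the difficulty lives: with the most natural choice of identifications $F(X)\otimes_A M\cong X\otimes M$ and $M\otimes_A F(X)\cong M\otimes X$, naturality of $c$ plus the hexagon make the two sides of the centrality equation literally equal, so every $A$-module map would appear central; the real content is hidden in which of the two mutually inverse braiding identifications (the $M_+$ versus $M_-$ structures of the paper) is used where. Second, the condition you claim to land on --- $\theta_W=\theta_Y$ for each simple $W\subseteq Y\otimes Z$ in the image of $\tilde\phi_Z$ --- does not imply $\tilde\phi_Z=0$ for $Z\not\cong\be$. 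Take $\C=\Z(\Vec_{\BZ/2\BZ})$, $A=\Fun(\BZ/2\BZ)=\be\oplus\chi$, $X=\chi$, $Y=\be$. The one-dimensional space $\Hom_\C(\chi,\be\otimes A)$ is concentrated in $Z=\chi$ with image $W=\chi$, and $\theta_\chi=1=\theta_\be$, so your condition is satisfied; yet this morphism must fail to be central, since $\Hom_{\Z(\C_A)}(\tilde F(\chi),\tilde F(\be))=\Hom_\C(\chi,\be)=0$ by the very proposition being proved ($\tilde F(\chi)$ and $\tilde F(\be)$ are the unit of $\C_A$ with two distinct half-braidings). So either your translation of centrality lost information or the final implication is false; either way the proof does not close. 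Note also that any correct version must use modularity in an essential and identifiable way --- for symmetric $\C=\Rep(G)$ with $A=\Fun(G)$ one has $\Z(\C_A)=\Vec$ and $\tilde F$ is far from fully faithful --- whereas your sketch only invokes modularity as an unspecified final step.
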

\begin{proof} Consider $\C_A$ as a module category over $\C$
via $F$ and over $\Z(\C_A)$ via $\tilde{F}$. We will prove the
dual statement (see \cite[Proposition 5.3]{ENO}), namely that the
functor $T:\C_A\boxtimes \C_A^{op}\to \C^*_{\C_A}$ dual  to
$\tilde F$ is surjective (here and below the superscript $op$
refers to the tensor category with the opposite tensor product).
Recall (see e.g. \cite{O1}) that the category $\C^*_{\C_A}$ is
identified with the category of $A-$bimodules. An explicit
description of the functor $T$ is the following: by definition,
any $M\in \C_A$ is a right $A-$module. Using the braiding and its
inverse one can define on $M$ two structures of a left $A-$module:
$A\otimes M\stackrel{c_{A,M}^{\pm 1}}{\longrightarrow}M\otimes
A\to M$. Both structures make $M$ into an $A-$bimodule, and we
will denote the two results by $M_+$ and $M_-$, respectively. Then
we have $T(M\boxtimes N)= M_+\otimes_AN_-$.  In particular we see
that the functor $\C \boxtimes \C^{op}\stackrel{F\boxtimes
F}{\longrightarrow} \C_A\boxtimes
\C_A^{op}\stackrel{T}{\longrightarrow}\C^*_{\C_A}$ coincides with
the functor $\C\boxtimes \C^{op}\simeq \Z(\C)\simeq
\Z(\C^*_{\C_A})\to \C^*_{\C_A}$ (see \cite{O2}). Since the functor
$\Z(\C^*_{\C_A})\to \C^*_{\C_A}$ is surjective (see
\cite[3.39]{EO}) we see that the functor $T$ is surjective. The
proposition is proved.
\end{proof}

\begin{remark} Note that since $\C$ and $\Z(\C_A)$ are modular we have a
factorization $\Z(\C_A)=\C\boxtimes \mathcal{D}$, where
$\mathcal{D}$ is the centralizer of $\C$ in $\Z(\C_A)$. One
observes that $\mathcal{D}$ is identified with the category of
``dyslectic" $A-$modules $\Rep^0(A)$, see \cite{KiO,P}.
\end{remark}

\begin{corollary} \label{equ}
Assume that $\dim(A)=\sqrt{\dim(\C)}$. Then the functors $\tilde
F: \C \to \Z(\C_A)$ and $T: \C_A\boxtimes \C_A^{op}\to \C^*_{\C_A}$
are tensor equivalences.
\end{corollary}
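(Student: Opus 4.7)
The plan is to leverage the work already done in Proposition \ref{tilde F}, so that essentially only a dimension count remains. First I would record the standard fact (see e.g.\ \cite{KiO}) that for a $\C$-algebra $A$ in a modular category $\C$ one has $\dim(\C_A) = \dim(\C)/\dim(A)$; this is already implicit in the formula \eqref{dims}. Under the hypothesis $\dim(A) = \sqrt{\dim(\C)}$ this gives $\dim(\C_A) = \dim(A)$ and hence
\[
\dim(\Z(\C_A)) = \dim(\C_A)^2 = \dim(\C), \qquad \dim(\C_A \boxtimes \C_A^{\op}) = \dim(\C_A)^2 = \dim(\C).
\]
Moreover $\C_A$ is an indecomposable $\C$-module category (because $\dim\Hom_\C(\be,A) = 1$ forces $A$ to be indecomposable as an $A$-module), so $\dim(\C^*_{\C_A}) = \dim(\C)$ as well.

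For $\tilde F : \C \to \Z(\C_A)$, Proposition \ref{tilde F} already gives full faithfulness, so the image $\tilde F(\C)$ is a fusion subcategory of $\Z(\C_A)$ of the same Frobenius-Perron dimension as the ambient category. A fully faithful tensor functor between fusion categories of equal dimension is automatically an equivalence, so $\tilde F$ is an equivalence. Equivalently, one may invoke the factorization $\Z(\C_A) = \tilde F(\C) \boxtimes \mathcal{D}$ noted in the remark preceding the corollary: the identity $\dim(\tilde F(\C))\cdot \dim(\mathcal{D}) = \dim(\Z(\C_A))$ forces $\dim(\mathcal{D}) = 1$, hence $\mathcal{D} = \Vec$ and $\tilde F$ is an equivalence.

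For $T : \C_A \boxtimes \C_A^{\op} \to \C^*_{\C_A}$, the proof of Proposition \ref{tilde F} already establishes surjectivity: that argument only used the known surjectivity of $\Z(\C^*_{\C_A}) \to \C^*_{\C_A}$ and did not rely on any dimension hypothesis. Combined with the equality of dimensions $\dim(\C_A \boxtimes \C_A^{\op}) = \dim(\C^*_{\C_A}) = \dim(\C)$ just computed, $T$ is a surjective tensor functor between fusion categories of equal Frobenius-Perron dimension, hence an equivalence by \cite[Proposition 2.20]{EO} --- the same principle used in Lemma \ref{diamond isomorphism}.

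I do not expect any serious obstacle here: the genuinely categorical input was packaged in Proposition \ref{tilde F}, and the role of the hypothesis $\dim(A) = \sqrt{\dim(\C)}$ is precisely to balance the dimensions on the two sides of each functor. The only small subtlety to watch is the indecomposability of $\C_A$ needed to apply $\dim(\C^*_{\C_A}) = \dim(\C)$, which is guaranteed by the $\C$-algebra axiom $\dim\Hom_\C(\be,A) = 1$.
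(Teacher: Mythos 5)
Your argument is correct and follows essentially the same route as the paper: compute $\dim(\Z(\C_A))=\dim(\C)$ from $\dim(\C_A)=\dim(\C)/\dim(A)$, then conclude that the fully faithful functor $\tilde F$ between categories of equal dimension is an equivalence. The only cosmetic difference is for $T$: the paper simply deduces it as the dual of the equivalence $\tilde F$, whereas you reuse the surjectivity of $T$ from the proof of Proposition \ref{tilde F} together with the dimension count --- both are fine, and your care about indecomposability of $\C_A$ (guaranteed by $\dim\Hom(\be,A)=1$) is a legitimate, if minor, point the paper leaves implicit.
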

\begin{proof} We have already seen that $\dim(\C_A)=\frac{\dim(\C)}{\dim(A)}$.
Hence,  $\dim(\Z(\C_A))=\frac{\dim(\C)^2}{\dim(A)^2}=\dim(\C)$. Since
$\tilde F$ is an  injective functor between categories
of equal dimension,  it is necessarily  an equivalence by
\cite[Proposition 2.19]{EO}. Hence the dual
functor $T$ is also an equivalence.
\end{proof}

\subsection{Hyperbolic modular categories as twisted group doubles}
We are now ready to state and prove our first main result which
relates hyperbolic modular categories and twisted doubles of
finite groups.

Let $\C$ be a modular category. Consider the set of all triples
$(G, \omega, F)$, where $G$ is a finite group, $\omega\in Z^3(G,
k^\times)$, and $F: \C \xrightarrow{\sim} \Z(\Vec_G^\omega)$ is a
braided tensor equivalence. Let us say that two triples  $(G_1,
\omega_1, F_1)$ and $(G_2, \omega_2, F_2)$ are equivalent if there
exists a tensor  equivalence $\iota: \Vec_{G_1}^{\omega_1}
\xrightarrow{\sim} \Vec_{G_2}^{\omega_2}$ such that
$\mathcal{F}_2\circ F_2  = \iota\circ \mathcal{F}_1\circ F_1$,
where $\mathcal{F}_i : \Z(\Vec_{G_i}^{\omega_i}) \to
\Vec_{G_i}^{\omega_i}, \, i=1,2$, are the canonical forgetful
functors.

Let $\mbox{E}(\C)$ be the set of all equivalences classes of
triples $(G, \omega, F)$. Let $\mbox{Lagr}(\C)$  be the set of all
Lagrangian subcategories of $\C$.

\begin{theorem}\label{main1}
For any modular category $\C$
there is a natural bijection
\[
f: \mbox{E}(\C)  \xrightarrow{\sim}  \mbox{Lagr}(\C).
\]
\end{theorem}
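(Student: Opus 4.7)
The plan is to construct a forward map $f : \mbox{E}(\C) \to \mbox{Lagr}(\C)$ together with an explicit inverse $g$. Define $f(G, \omega, F) := F^{-1}(\E_G)$, where $\E_G \cong \Rep(G)$ is the canonical Lagrangian subcategory of $\Z(\Vec_G^\omega)$ described in Remark \ref{rmk}(b), namely the preimage of $\Vec \subset \Vec_G^\omega$ under the forgetful functor. Well-definedness on equivalence classes of triples is immediate since any tensor equivalence $\iota : \Vec_{G_1}^{\omega_1} \xrightarrow{\sim} \Vec_{G_2}^{\omega_2}$ intertwines the corresponding canonical Lagrangians.

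To construct $g$, start with a Lagrangian $\L \subset \C$. By Remark \ref{rmk}(d), $\L$ is equivalent to $\Rep(G)$ as a symmetric braided category for a canonically determined finite group $G$. Let $A \in \L$ be the regular algebra corresponding to the canonical fiber functor $\Rep(G) \to \Vec$. Then $A$ is a $\C$-algebra in the sense of Definition \ref{C-algebra} with $\dim(A) = |G| = \sqrt{\dim(\C)}$, so Corollary \ref{equ} gives a braided tensor equivalence $\tilde F : \C \xrightarrow{\sim} \Z(\C_A)$. In order to set $g(\L) := (G, \omega, \tilde F)$ for some $\omega \in Z^3(G, k^\times)$, I must show that $\C_A$ is pointed and equivalent to $\Vec_G^\omega$. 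This is the main obstacle.

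The key steps to establish pointedness of $\C_A$ are: (i) $\dim(\C_A) = \dim(\C)/\dim(A) = |G|$; (ii) for every $X \in \L$, the free module $F(X) = X \ot A$ is isomorphic as an $A$-module to $A^{\dim X}$, because $A$ is the regular algebra in $\L \cong \Rep(G)$ and so $\L_A \cong \Vec$ has the unit $A = \be_{\C_A}$ as its unique simple object. Consequently $\tilde F(\L)$ is contained in the fusion subcategory $\K \subset \Z(\C_A)$ of objects whose image under the forgetful functor $\Z(\C_A) \to \C_A$ lies in $\Vec$. A dimension count gives $\dim(\K) = \dim(\C_A) = |G| = \dim(\L)$, so in fact $\tilde F(\L) = \K$. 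Since $\L \cong \Rep(G)$ is symmetric, $\K$ is a symmetric Lagrangian subcategory of $\Z(\C_A)$ isomorphic to $\Rep(G)$; this forces $\C_A$ to be pointed with the group of invertible simples identified with $G$, and the resulting associativity isomorphism gives the desired cocycle $\omega$. Concretely, the group of invertible objects of $\C_A$ has order at least $|G|$ by the existence of the tensor functor $\K \to \C_A$ coming from the forgetful functor restricted to $\tilde F(\L) \cong \Rep(G)$ and dualizing; combined with $\dim(\C_A) = |G|$, all simples of $\C_A$ must be invertible.

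Finally, I verify that $f$ and $g$ are mutually inverse. For $\L \in \mbox{Lagr}(\C)$, the Lagrangian associated with $g(\L) = (G, \omega, \tilde F)$ is $\tilde F^{-1}(\E_G) = \tilde F^{-1}(\K) = \L$ by the identification $\tilde F(\L) = \K$ established above. Conversely, starting from a triple $(G, \omega, F)$ and setting $\L = F^{-1}(\E_G)$, the canonical algebra $A$ in $\L$ matches the one built from $\E_G \cong \Rep(G) \subset \Z(\Vec_G^\omega)$, and the resulting braided equivalence $\tilde F : \C \xrightarrow{\sim} \Z(\C_A) \cong \Z(\Vec_G^\omega)$ agrees with the given $F$ up to the prescribed notion of equivalence of triples. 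All of the main difficulty is concentrated in the pointedness step; the rest of the argument is bookkeeping using the universal properties of $\C_A$ and $\tilde F$.
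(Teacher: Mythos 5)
Your construction of $f$ and the first half of $g$ (the algebra $A=\Fun(G)$, the count $\dim(A)=\sqrt{\dim(\C)}$, and the appeal to Corollary~\ref{equ} to get the equivalence $\tilde F:\C\xrightarrow{\sim}\Z(\C_A)$) coincide with the paper's proof. The gap is exactly where you say the main difficulty lies: the deduction that $\C_A$ is pointed. You argue that $\tilde F(\L)$ equals the subcategory $\K\subset\Z(\C_A)$ of objects with trivial forgetful image, and then assert that a symmetric Lagrangian $\K\cong\Rep(G)$ of this kind ``forces $\C_A$ to be pointed,'' justified only by ``the existence of the tensor functor $\K\to\C_A$ \dots and dualizing.'' But the restriction of the forgetful functor to $\K$ lands in $\Vec\subset\C_A$ by the very definition of $\K$; it is nothing more than a fiber functor on $\Rep(G)$, which exists for every finite group and carries no information about invertible objects of $\C_A$. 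No argument is given that produces even a single nontrivial invertible object of $\C_A$, so the claim that the group of invertibles has order at least $|G|$ is unsupported, and this is precisely the content that needs proof. (A smaller omitted point: the inequality $\dim(\K)\le\dim(\C_A)$ needed for your dimension count is true but requires a short argument via the induced object $I(\be)=\oplus_{X\in\O(\C_A)}X\ot X^*$; it is not automatic.)

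The paper closes this gap with the \emph{second} equivalence of Corollary~\ref{equ}, namely $T:\C_A\boxtimes\C_A^{\op}\xrightarrow{\sim}\C^*_{\C_A}$, the category of $A$-bimodules in $\C$. Since $A$-bimodules supported in $\E\cong\Rep(G)$ form a copy of $\Vec_G$, there are $|G|$ pairwise non-isomorphic invertible bimodules $A_g$; writing $A_g=T(X\boxtimes Y)$ with $X,Y$ simple in $\C_A$ and using invertibility of $A_g$ forces $Y=X^*$ and $X$ invertible. This produces $|G|$ invertible simples of $\C_A$, which exhaust $\O(\C_A)$ because $\dim(\C_A)=|G|$. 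If you want to keep your route, you must either reproduce this bimodule argument or supply some other mechanism that actually manufactures invertible objects of $\C_A$ from the Lagrangian $\K$; as written, the pointedness step is asserted rather than proved.
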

\begin{proof}
The map $f$ is defined as follows. Note that each braided tensor
equivalence $F: \C\xrightarrow{\sim} \Z(\Vec_G^\omega)$ gives rise to the
Lagrangian subcategory $f(G, \omega, F)$ of  $\C$ formed by all objects sent to
multiples of the unit object $\be$ under the forgetful functor
$\Z(\Vec_G^\omega)\to \Vec_G^\omega$. This subcategory is clearly
the same for all equivalent choices of $(G, \omega, F)$.

Conversely, given a Lagrangian subcategory $\E \subseteq \C$
it follows from Deligne's theorem \cite{De} that
$\mathcal E = \Rep(G)$ for a unique (up to isomorphism) finite
group $G$.
Let $A = \Fun(G)\in  \Rep(G)=\E \subset \C$. It is clear that $A$
is a $\C-$algebra and $\dim(A)=\dim(\E)=\sqrt{\dim(\C)}$. Then by
Corollary \ref{equ}, the functor $\tilde F:\C \to \Z(\C_A)$ is an
equivalence.

Finally, let us show that $\C_A$ is pointed and $K_0(\C_A) =
\mathbb{Z}G$. Note that there are $|G|$ non-isomorphic structures
$A_g$, $g\in G$, of an invertible $A$-bimodule on $A$, since the
category of $A$-bimodules in $\mathcal E$ is equivalent to
$\Vec_G$. For each $A_g$ there is a pair $X,Y$ of simple objects
in $\C_A$ such that $T(X\boxtimes Y) = A_g$. Taking the forgetful
functor to $\C_A$ we obtain $Y = X^*$  and $X$ is invertible.
Hence, for each $g\in G$ there is a unique invertible $X_g\in
\C_A$ such that $T(X_g \boxtimes X_g^*) = A_g$, and therefore $g
\mapsto X_g$ is an isomorphism of $K_0$ rings. Thus, $\C_A \cong
\Vec_G^\omega$ for some $\omega \in \Z^3(G, k^\times)$. We set
$h(\E)$ to be the class of the equivalence  $\tilde{F}:  \C
\xrightarrow{\sim} \Z(\C_A)$.

Let show that the above constructions $f$ and $h$ are inverses of each other.
Let $\E$ be a Lagrangian subcategory of $\C$ and let $A$ be the algebra defined
in the previous paragraph.  The forgetful functor from $\C\cong \Z(\C_A)$
to $\C_A$ is the free module functor, and so $f(h(\E))$
consists of all objects  $X$ in $\C$ such that $X \otimes A$ is a multiple of $A$.
Since $A$ is the regular object of  $\E$, it follows that
$f(h(\E))= \E$ and $f\circ h =\id$.

Proving that $h \circ f=\id$ amounts to a verification of the
following fact. Let $G$ be a finite group, let $\omega\in Z^3(G,\,
k^\times)$, and let $A =\Fun(G)$ be the canonical algebra in
$\Rep(G)\subset \Z(\Vec_G^\omega)$. Then the category of
$A$-modules in $\Z(\Vec_G^\omega)$ is equivalent to
$\Vec_G^\omega$ and the functor of taking the free $A$-module
coincides with the forgetful functor from  $\Z(\Vec_G^\omega)$ to
$\Vec_G^\omega$. This is straightforward and is left to the
reader.
\end{proof}

\begin{remark}
Our reconstruction of the representation category of a twisted
group double from a Lagrangian subcategory can be viewed as a
categorical analogue of the following reconstruction of the double
of a quasi-Lie bialgebra from a Manin pair (i.e., a pair
consisting of a metric Lie algebra and its Lagrangian subalgebra)
in the theory of quantum groups \cite[Section 2]{Dr}.

Let $\mathfrak{g}$ be a finite-dimensional metric Lie algebra
(i.e., a Lie algebra on which a nondegenerate invariant symmetric
bilinear form is given). Let $\mathfrak{l}$ be a Lagrangian
subalgebra of $\mathfrak{g}$. Then $\mathfrak{l}$ has  a structure
of a quasi-Lie bialgebra and there is an isomorphism  between
$\mathfrak{g}$ and the double $\mathfrak{D}(\mathfrak{l})$ of
$\mathfrak{l}$. The correspondence between Lagrangian subalgebras
of   $\mathfrak{g}$ and  doubles isomorphic to $\mathfrak{g}$ is
bijective, see \cite[Section 2]{Dr} for details.
\end{remark}

\begin{remark}
Given a hyperbolic modular category $\C$ there is no canonical way
to assign to it a pair $(G,\, \omega)$ such that $\C \cong
\Z(\Vec_G^\omega)$ as a braided fusion category. Indeed, it
follows from \cite{EG1} that there exist non-isomorphic finite
groups $G_1, G_2$ such that $\Z(\Vec_{G_1}) \cong \Z(\Vec_{G_2})$
as braided fusion categories. (See also \cite{N}.)
\end{remark}

\begin{theorem}
\label{dichotomy} Let $\C$ be a modular category such that
$\dim(C)=n^2,\, n\in \mathbb{Z}^+$,  and such that $\xi(\C)=1$.
Assume that $\C$ contains a symmetric subcategory $\V$ such that
$\dim(\V)=n$. Then either $\C$ is the center of a pointed category
or it contains an object with non-integer dimension.
\end{theorem}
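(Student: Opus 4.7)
The proof proceeds by cases on whether $\V$ is isotropic. Since $\V$ is symmetric we have $\V\subseteq\V'$, and the identity $\dim(\V)\dim(\V')=\dim(\C)=n^2$ combined with $\dim(\V)=n$ gives $\V=\V'$. By Deligne's theorem, $\V\cong\Rep(G,z)$ for some finite group $G$ and central $z\in G$ with $z^2=1$. If $z=1$ then $\V$ is isotropic and hence Lagrangian, so Theorem~\ref{main1} realizes $\C$ as $\Z(\Vec_G^\omega)$, and we are done.

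Suppose $z\ne 1$. By Corollary~\ref{deligne}(ii), $\V_1:=\Rep(G/\langle z\rangle)\subset\V$ is isotropic of dimension $n/2$. Form the modularization $F:\V_1'\to\bar{\V_1'}$. By Remark~\ref{newrmk} and Theorem~\ref{tau after mod}, $\bar{\V_1'}$ is a modular category with $\dim(\bar{\V_1'})=4$ and $\xi(\bar{\V_1'})=\xi(\C)=1$. The image $F(\V)\subset\bar{\V_1'}$ is a symmetric subcategory of dimension $2$, and since $\V$ is not isotropic the twist of its nontrivial simple object equals $-1$.

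Assume now, toward the dichotomy, that every simple object of $\C$ has integer dimension. Then $\V_1'$ is pseudo-unitary, hence so is $\bar{\V_1'}$ with $\FPdim(\bar{\V_1'})=4$. A pseudo-unitary modular category of FPdim $4$ is either pointed or of Ising type (simples of dimensions $1,1,\sqrt 2$). In the Ising case, formula~\eqref{dims} would pull the $\sqrt 2$-dimensional simple back to a simple $X\in\V_1'\subset\C$ with $d_\C(X)=(n/2)\sqrt 2\notin\mathbb{Z}$, contradicting integrality. So $\bar{\V_1'}$ is pointed. Its underlying group cannot be $\mathbb{Z}/4$, whose nondegenerate quadratic forms all produce central charges that are primitive $8$th roots of unity; so $\bar{\V_1'}\cong\C(\mathbb{Z}/2\times\mathbb{Z}/2,q)$. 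A short case analysis of the Gauss sum $\sum_g q(g)=2$, under the further constraints of nondegeneracy of $q$ and existence of some order-$2$ element $g_0$ with $q(g_0)=-1$ (supplied by $F(\V)$), pins down $q$ up to the symmetry of $\mathbb{Z}/2\times\mathbb{Z}/2$, and in every surviving case there is a Lagrangian isotropic subcategory $\L\subset\bar{\V_1'}$ of dimension $2$ generated by an order-$2$ element on which $q=1$.

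Finally, the pullback $\K:=F^{-1}(\L)\subset\V_1'$ is a fusion subcategory of dimension $\dim(\V_1)\dim(\L)=n$. Since the twist on $\bar{\V_1'}$ satisfies $\phi_{F(X)}=\theta_X$, isotropy of $\L$ forces isotropy of $\K$, and together with $\dim(\K)\dim(\K')=\dim(\C)=n^2$ this yields $\K=\K'$, so $\K$ is Lagrangian in $\C$. Theorem~\ref{main1} then realizes $\C$ as $\Z(\Vec_H^\eta)$, completing the proof. The main technical obstacle is the structural analysis of the four-dimensional $\bar{\V_1'}$: one must combine the central charge hypothesis, the existence of the non-isotropic subcategory $F(\V)$, and nondegeneracy to narrow $\bar{\V_1'}$ down to a pointed category admitting a Lagrangian subcategory, which is then lifted to the desired Lagrangian in $\C$.
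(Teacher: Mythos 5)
Your argument tracks the paper's own proof up to a point: both reduce to the non-isotropic case, pass to an isotropic subcategory of half the dimension of $\V$, modularize to get a four-dimensional modular category of central charge $1$, and use the classification in Example~\ref{peven} (together with integrality and the presence of the twist-$(-1)$ object coming from $\V$) to identify that modularization with the double of $\BZ/2\BZ$, which contains a Lagrangian subgroup $\L$ of order $2$. The gap is in your final step: the claim that $\K:=F^{-1}(\L)$ is a fusion subcategory of $\V_1'$ of dimension $\dim(\V_1)\dim(\L)=n$ is not justified, and it is false in general. The modularization $\bar{\V_1'}$ carries a canonical action of the group $\Gamma=G/\langle z\rangle$ (the ``deck transformations'' of the de-equivariantization), and the constituents of $F(X)$ for simple $X$ form a single $\Gamma$-orbit. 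Fusion subcategories of $\V_1'$ containing $\V_1$ correspond only to the \emph{$\Gamma$-stable} fusion subcategories of $\bar{\V_1'}$; if $\L$ is not $\Gamma$-stable, the set of simples $X$ with $F(X)$ supported on $\L$ generates a strictly smaller subcategory and your dimension count, hence the equality $\K=\K'$, fails. This is not a hypothetical worry: in the double of $\BZ/2\BZ$ the form is $q=(1,1,-1)$, the element with $q=-1$ is $\Gamma$-fixed, but $\Gamma$ may interchange the two elements with $q=1$ and hence the two Lagrangian subgroups. For $\C=\Z(\mathrm{Ising})$ with $\V=\C_{pt}$ this swap actually occurs (the two-dimensional simple $\sigma\bt\bar\sigma\in\V_1'$ maps onto the sum of the two $q=1$ objects), so neither Lagrangian of $\bar{\V_1'}$ pulls back. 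That example is caught by your integrality assumption, but you give no argument that integrality of $\C$ forbids the swap in general, and nothing in the steps you have established rules it out.

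This is precisely the difficulty the paper's proof is designed to circumvent: instead of pulling back the \emph{subcategory} $\L$, it pulls back the \emph{algebra} $A_1=\Fun(\BZ/2\BZ)\in\L$ along the adjoint $I$ of the modularization functor, obtaining a commutative $\C$-algebra $A=I(A_1)$ of dimension $n$ regardless of any stability issue, and then analyzes the module category $\C_A$ and the bimodule category $(\K')^*_{\C_A}$ directly, showing that either some simple $A$-module has dimension $\sqrt2$ (forcing a non-integral object in $\C$ via \cite[Corollary 8.36]{ENO}) or $\C_A$ is pointed, in which case Corollary~\ref{equ} gives $\C\cong\Z(\C_A)$. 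Note also a smaller imprecision in your exclusion of the Ising possibility for $\bar{\V_1'}$: formula \eqref{dims} does not produce a \emph{simple} $X\in\V_1'$ of dimension $(n/2)\sqrt2$, since $I(\sigma)$ need not be simple; the correct statement is that $d(I(\sigma))=(n/2)\sqrt2$ is a nonnegative integer combination of the dimensions of simples of $\C$, which is irrational and hence contradicts integrality. That step is repairable in one line; the pullback of the Lagrangian is the genuine gap.
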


\begin{proof} Assume that $\V$ is not isotropic.
Then $\V$ contains an isotropic subcategory $\K$ such that
$\dim(\K)=\frac12 \dim(\V)$ (this follows from Deligne's
description of symmetric categories, see 2.7). Hence the category
$\bar \K'$ (modularization of $\K'$) has dimension 4 and central
charge 1. It follows from the explicit classification given in
Example 5.1 (b),(d) that the category $\bar \K'$ contains an
isotropic subcategory of dimension 2; clearly this subcategory is
equivalent to $\Rep(\BZ/2\BZ)$. Let $A_1=\Fun(\BZ/2\BZ)$ be the
commutative algebra of dimension 2 in this subcategory. Let $I:
\bar \K'\to \K'$ be the right adjoint functor to the
modularization functor $F: \K'\to \bar \K'$.

We claim that the object $A:=I(A_1)$ has a canonical structure of
a $\C-$algebra. Indeed, we have a canonical morphism in
$\Hom(F(A),A_1)=\Hom(A,I(A_1))=\Hom(A,A)\ni \id$. Using this one
can construct a multiplication on $A$ via $\Hom(A_1\otimes
A_1,A_1)\to \Hom(F(A)\otimes F(A),A_1)=\Hom(F(A\otimes A),A_1)=
\Hom(A\otimes A,A)$. Since the functor $F$ is braided it follows
from the commutativity of $A_1$ that $A$ is commutative. Other
conditions from Definition~\ref{C-algebra} are also easy to
check. In particular
$\dim(A)=\dim(\K)\dim(A_1)=\dim(\V)=\sqrt{\dim(\C)}$. We also
note that the category $Rep_{\K'}(A)$ contains precisely two
simple objects (actually, the functor $M\mapsto I(M)$ is an
equivalence of categories between $\Rep_{\bar \K'}(A_1)$ and
$\Rep_{\K'}(A)$); we will call these two objects $\be$ (for $A$
itself considered as an $A-$module) and $\delta$. Clearly $\delta
\otimes_A\delta =\be$.

By Corollary~\ref{equ}, we have an equivalence
$\C_{\C_A}^*=\C_A\boxtimes \C_A^{op}$. Moreover, the forgetful
functor $\C_{\C_A}^*\to \C_A$ corresponds to the tensor product
functor $\C_A\boxtimes \C_A^{op}\to \C_A$. Now consider the
subcategory $(\K')^*_{\C_A}\subset \C^*_{\C_A}$ (in other words
$A-$bimodules in $\K'$); the forgetful functor above restricts to
$S:(\K')^*_{\C_A}\to \Rep_{\K'}(A)$.

Let $M\in (\K')^*_{\C_A}$ be a simple object. We claim that there
are three possibilities: 1) $S(M)=\be$, 2) $S(M)=\delta$ or 3)
$S(M)=\be \oplus \delta$. Indeed, $M=X\boxtimes Y\in \C_A\boxtimes
\C_A^{op}$ and $S(M)=X\otimes Y$ for some simple $X,Y\in \C_A$.
Since $\be$ and $\delta$ are invertible the result is clear.

Now, notice that if there exists $M$ as in case 3) then we have $X=Y^*$ and
$\dim(X)=\dim(Y)=\sqrt{2}$. Thus the category $\C_A$ contains an object
with non-integer dimension, which implies that the category $\C$ contains an
object with non-integer dimension (see e.g. \cite[Corollary 8.36]{ENO}), and
the theorem is proved in this case.
Hence we will assume that for any $M\in (\K')^*_{\C_A}$ only 1) or 2) holds. This
implies that all objects of $(\K')^*_{\C_A}$ are invertible. Note
that $\dim((\K')^*_{\C_A})=\dim(K')=2\sqrt{\dim(\C)}$ and hence we
have precisely $2\sqrt{\dim(\C)}$ simple objects. Consider all
objects $M\in (\K')^*_{\C_A}$ such that $S(M)=\be$; it is easy to
see that there are precisely $\sqrt{\dim(C)}$ of those (indeed,
$X\boxtimes Y\mapsto X\boxtimes (Y\otimes_A\delta)$ gives a
bijection between simple bimodules $M$ with $S(M)=\be$ and simple
bimodules $M$ with $S(M)=\delta$). Let $G$ be the group of
isomorphism classes of all objects $M\in (\K')^*_{\C_A}$ with
$S(M)=\be$ (thus $|G|=\sqrt{\dim(\C)}$). Any object of this type
is of the form $X_g\boxtimes (X_g)^*$ for some invertible $X_g\in
\C_A$. Thus we already constructed $\sqrt{\dim(\C)}$ invertible
simple objects in $\C_A$. Since $\dim(\C_A)=\sqrt{\dim(\C)}$ the
objects $X_g$ exhaust all simple objects in $\C_A$. By
Corollary~\ref{equ}, we are done.
\end{proof}

\subsection{A criterion for a modular category to be group-theoretical}
 
Let $\C$ be a modular category. It is known that the entries
of the $S$-matrix of $\C$ are cyclotomic integers \cite{CG, dBG}.
Hence, we may identify them with complex numbers. In particular,
the notions of complex conjugation and absolute value of
the elements of the $S$-matrix make sense.

\begin{remark}
\label{recall Kad'}
Let $\K \subseteq \C$
be a fusion subcategory. Recall from \cite{GN} that $(\K_{ad})'$
is spanned by simple objects $Y$ such that $|s_{XY}|=d_X d_Y$
for all simple $X$ in $\K$. In this case the ratio 
$b(X, Y):= s_{XY}/(d_Xd_Y)$ is a root of unity.  
Furthermore, for all simple $X\in \K,\, Y_1, Y_2 \in \K_{ad}'$ 
and any simple subobject $Z$  of $Y_1\ot Y_2$ we have
\begin{equation}
\label{b mult}
b(X, Y_1) b(X, Y_2) = b(X, Z),
\end{equation}
as explained in \cite{Mu2}.
\end{remark}

\begin{lemma}
\label{large sym}
Let $\C$ be a modular category and let $\K \subseteq \C$
be a fusion subcategory such that 
$\K \subseteq (\K_{ad})'$.
\begin{enumerate}
\item[(1)] There is a grading $\K = \oplus_{g\in G}\, \K_g$ such that
$\K_1=\K'\cap \K$.
\item[(2)] There is a non-degenerate symmetric 
bilinear form $b$ on $G$ such that
$b(g,h) =  s_{XY}/(d_Xd_Y)$ for all $X\in \K_g$ and $Y\in \K_h$.
\item[(3)] If $\K'\cap \K$ is isotropic then there is a non-degenerate
quadratic form $q$ on $G$ such that $q(g) =\theta_X$ for all 
$X\in \K_g$. In this case $b$ is the  bilinear form corresponding to $q$.
\end{enumerate}
\end{lemma}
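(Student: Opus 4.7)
The plan is to bootstrap off Remark~\ref{recall Kad'}: the hypothesis $\K \subseteq (\K_{ad})'$ guarantees that $b(X,Y) := s_{XY}/(d_X d_Y)$ is a root of unity for all simple $X, Y \in \K$, and by \eqref{b mult} it is multiplicative under tensor product in the second slot. Symmetry $s_{XY} = s_{YX}$ then yields multiplicativity in the first slot as well.

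For part (1), I define an equivalence relation on $\O(\K)$ by declaring $Y_1 \sim Y_2$ when $b(X, Y_1) = b(X, Y_2)$ for every simple $X \in \K$. Multiplicativity of $b(X,-)$ ensures that all simple constituents of a tensor product $Y_1 \otimes Y_2$ are mutually equivalent and that their common class depends only on the classes of $Y_1$ and $Y_2$; since $b(X,\be)=1$ and $\be$ is a summand of $Y \otimes Y^*$, the quotient $G$ becomes a finite group with the class of $Y^*$ inverse to the class of $Y$. Letting $\K_g$ be the span of simples of class $g$ produces the desired $G$-grading. The trivial component is $\{Y : b(X, Y) = 1 \text{ for every simple } X \in \K\}$, and by the characterization $s_{XY} = d_X d_Y$ if and only if $X, Y$ centralize each other (\cite[Prop.~2.5]{Mu2}), this is exactly $\K' \cap \K$. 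For part (2), setting $b(g,h) := b(X,Y)$ for any $X \in \K_g, Y \in \K_h$ is well-defined by construction; symmetry comes from symmetry of $S$ and bilinearity from multiplicativity of $b$ on objects. Non-degeneracy is immediate since $b(g,h)=1$ for all $g$ forces $Y \in \K' \cap \K = \K_1$.

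For part (3), assume $\K' \cap \K$ is isotropic and define $q(g) := \theta_X$ for any simple $X \in \K_g$. The key step is well-definedness: given two simples $X, X' \in \K_g$, pick any simple $W \subseteq X \otimes (X')^*$. Then $W$ lies in $\K_1 = \K' \cap \K$, so isotropy gives $\theta_W = 1$ and the centralizer property makes $W$ and $X'$ centralize each other. Frobenius reciprocity puts $X$ as a summand of $W \otimes X'$, and with $c_{X'W} c_{WX'} = \id$ the balancing axiom \eqref{balancing ax} collapses to $\theta_X = \theta_W \theta_{X'} = \theta_{X'}$. The identity $q(g^{-1}) = \theta_{X^*} = \theta_X = q(g)$ is standard in a ribbon category. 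To see that $b$ is the bilinear form of $q$, apply \eqref{sij} to $X \in \K_g, Y \in \K_h$: every simple $Z \subseteq X \otimes Y$ lies in $\K_{gh}$ and therefore has twist $q(gh)$, so using $\sum_Z N_{XY}^Z d_Z = d_X d_Y$ we obtain $s_{XY} = \theta_X^{-1}\theta_Y^{-1} q(gh) d_X d_Y$, that is, $b(g,h) = q(gh)/(q(g)q(h))$. Non-degeneracy of $q$ is then inherited from that of $b$.

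The main obstacle is the well-definedness of $q$ in (3): the argument crucially uses \emph{both} halves of the hypothesis on $\K' \cap \K$ --- the isotropy to kill $\theta_W$, and the centrality relative to $\K$ to trivialize the monodromy $c_{X'W}c_{WX'}$ so that the balancing axiom reduces to a plain product of twists. Neither ingredient alone is enough to transport the twist from $X'$ to $X$.
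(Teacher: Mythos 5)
Your proof is correct, and it is worth noting where it diverges from the paper's. The paper disposes of (1) in one line by observing that $\K_{ad}\subseteq\K'\cap\K\subseteq\K$ and invoking the universal grading theory of \cite{GN} (any fusion subcategory containing $\K_{ad}$ is the trivial component of some grading of $\K$); you instead build the grading by hand, defining $G$ as the set of equivalence classes of simples under $Y_1\sim Y_2 \Leftrightarrow b(X,Y_1)=b(X,Y_2)$ for all simple $X$, and using the multiplicativity \eqref{b mult} to make $G$ a group. This is more self-contained and has the pleasant side effect that (2) becomes well-defined by construction, whereas the paper must separately check (via $Y_1\ot Y_2^*\in\K'\cap\K$) that $b$ is constant on the components of the grading it imported from \cite{GN}. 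For (3) the paper simply points to the modularization discussion of Section~\ref{subsect modularization}, where the descent of $\theta$ to the quotient is built into the construction of the twist $\phi$ on $\bar\C$; your direct transport argument --- pick a simple $W\subseteq X\ot(X')^*$, use isotropy of $\K'\cap\K$ to kill $\theta_W$ and transparency of $W$ to collapse the balancing axiom, then apply naturality of $\theta$ to the summand $X$ of $W\ot X'$ --- is exactly the mechanism hidden inside that construction, and your closing remark that both halves of the hypothesis on $\K'\cap\K$ are needed is an accurate diagnosis. The trade-off is that the paper's proof is shorter but leans on two external results, while yours is longer but verifiable from the axioms and M\"uger's centralizer criterion alone; both establish the same statement.
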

\begin{proof}
Since $\K_{ad} \subseteq \K'\cap \K \subseteq \K$ the assertion (1)
follows from \cite{GN}. 

Let $b(X, Y)= s_{XY}/(d_Xd_Y)$
for all simple $X,Y \in \K$. Clearly, $b$ is symmetric
and $b(X, Y)= 1$ for all simple $X$ in $\K$ if and only
if $Y \in \K'\cap \K =\K_1$.
To prove (2) it suffices to check that $b$ depends only
on $h \in G$ such that $Y\in \K_h$ (then the $G$-linear 
property follows from \eqref{b mult}). Let $Y_1, Y_2$
be simple objects in $\K_h$. Then $Y_1\ot Y_2^* \in \K'\cap \K$
and so $b(X, Y_1) b(X, Y_2^*)  = 1$, whence
$b(X, Y_1) =  b(X, Y_2)$, as desired.

Finally, (3) is a direct consequence of our discussion in
Section~\ref{subsect modularization}.
\end{proof}

For a subcategory $\K \subseteq \C$ satisfying the hypothesis
of Lemma~\ref{large sym} let $(G_\K, b_\K)$
be the corresponding abelian grading group and bilinear form.
Note that if such $\K$ is considered as a subcategory of
$\C^{rev}$ then the corresponding bilinear form is $(G_\K, b_\K^{-1})$.

\begin{theorem}
\label{max symmetric  criterion}
Let $\C$ be a modular category. Then symmetric subcategories
of $\Z(\C)\cong \C \bt \C^{\rev}$ of dimension $\dim(\C)$ 
are in bijection with triples $(\L,\,\mathcal{R}, \iota)$,
where $\mathcal{L} \subseteq \C$,
$\mathcal{R} \subseteq \C^{\rev}$
are symmetric subcategories such that
$(\L')_{ad} \subseteq \L$, $(\mathcal{R}')_{ad} \subseteq
\mathcal{R}$, and $\iota: (G_{\L'}, b_{\L'}) 
\cong (G_{\mathcal{R}'}, b_{\mathcal{R}'})$ is an isomorphism
of bilinear forms.

Namely, any such subcategory is of the form
\begin{equation}
\D_{\L,\mathcal{R}, \iota} = 
\oplus_{g\in G_{\L'}} \, \L_g \bt \mathcal{R}_{\iota(g)}.
\end{equation}
\end{theorem}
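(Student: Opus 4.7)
The plan is to construct mutually inverse maps between symmetric subcategories $\D\subseteq\C\bt\C^{\rev}\cong\Z(\C)$ of dimension $\dim(\C)$ and triples $(\L,\mathcal{R},\iota)$. Note first that by~\eqref{dimK dimK'} any such $\D$ is automatically Lagrangian, $\D=\D'$.

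\emph{From triple to subcategory.} The hypothesis $(\L')_{ad}\subseteq\L$ is precisely the condition $\K\subseteq(\K_{ad})'$ of Lemma~\ref{large sym} applied to $\K=\L'$ (by taking centralizers in the modular $\C$). Lemma~\ref{large sym} then furnishes a faithful grading $\L'=\bigoplus_{g\in G_{\L'}}\L'_g$ with trivial component $\L'_1=\L'\cap\L=\L$ and bilinear form $b_{\L'}$; analogously for $\mathcal{R}'$. I will set $\D_{\L,\mathcal{R},\iota}:=\bigoplus_g \L'_g\bt\mathcal{R}'_{\iota(g)}$ and check: closure under tensor and duals, from $\iota$ being a group homomorphism; $\dim(\D_{\L,\mathcal{R},\iota})=\dim(\C)$, from faithful grading giving $|G_{\L'}|\dim(\L)^2=\dim(\L)\dim(\L')=\dim(\C)$ combined with $\iota$ bijective forcing $\dim(\L)=\dim(\mathcal{R})$; and symmetry, which reduces via the factorization $s^{\C\bt\C^{\rev}}_{X\bt Y,X'\bt Y'}=s^\C_{XX'}\overline{s^\C_{YY'}}$, Lemma~\ref{large sym}(2), the isometry property of $\iota$, and the fact (Remark~\ref{recall Kad'}) that $b_{\L'}$ takes values of modulus $1$.

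\emph{From subcategory to triple.} I will set $\L\bt\Vec:=\D\cap(\C\bt\Vec)$ and $\Vec\bt\mathcal{R}:=\D\cap(\Vec\bt\C^{\rev})$; symmetry of $\L$ in $\C$ is immediate from applying $\D$-symmetry to pairs $X\bt\be,X'\bt\be$. The technical core is the identification $\pi_\C(\D)=\L'$, where $\pi_\C(\D)$ denotes the fusion subcategory of $\C$ generated by simples $X$ with $X\bt Y\in\D$ for some $Y$. The inclusion $\pi_\C(\D)\subseteq\L'$ follows from applying $\D$-symmetry to $X\bt Y$ and $X_0\bt\be$, which forces $s^\C_{X_0X}=d_{X_0}d_X$. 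For the reverse inclusion I will use Lemma~\ref{diamond dimensions}: unwinding simple tensor products gives $\D\vee(\Vec\bt\C^{\rev})=\pi_\C(\D)\bt\C^{\rev}$, and Lemma~\ref{diamond dimensions} yields $\dim(\pi_\C(\D))=\dim(\C)/\dim(\mathcal{R})$, whence $\dim(\L)\le\dim(\mathcal{R})$; the symmetric argument gives $\dim(\mathcal{R})\le\dim(\L)$, forcing $\pi_\C(\D)=\L'$. The condition $(\L')_{ad}\subseteq\L$ then follows by a tensor/subobject argument: for simple $X\subseteq YY^*$ with $Y\in\L'=\pi_\C(\D)$, pick $Z$ with $Y\bt Z\in\D$; then $(YY^*)\bt(ZZ^*)\in\D$, and $\be\subseteq ZZ^*$ yields $X\bt\be\in\D$, so $X\in\L$.

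\emph{Construction of $\iota$ and inverse verification.} Applying Lemma~\ref{large sym} to $\K=\L'$ gives the grading of $\L'$. I will define $\iota:G_{\L'}\to G_{\mathcal{R}'}$ by $\iota(g):=$ grade of $Y$ for any simple $X\bt Y\in\D$ with $X\in\L'_g$. Well-definedness uses: $X\bt Y,X\bt Y'\in\D$ implies $X^*X\bt Y^*Y'\in\D$, and $\be\subseteq X^*X$ extracts some $\be\bt W\in\D$ with $W\subseteq Y^*Y'$, forcing $W\in\mathcal{R}=\mathcal{R}'_1$ and hence $Y,Y'$ in a common grade. The homomorphism property follows by the same tensor technique; bijectivity is forced by $|G_{\L'}|=|G_{\mathcal{R}'}|$ (from $\dim(\L)=\dim(\mathcal{R})$); the isometry property emerges directly from reading off the $\D$-symmetry equation $s^\C_{XX'}\overline{s^\C_{YY'}}=d_Xd_{X'}d_Yd_{Y'}$ via Lemma~\ref{large sym}(2). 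The two constructions are mutually inverse: $\D_{\L,\mathcal{R},\iota}\cap(\C\bt\Vec)=\L\bt\Vec$ since only the $g=1$ summand contributes (using $\mathcal{R}'_1=\mathcal{R}$), and the recovered $\iota$ matches the given one; conversely $\D\subseteq\D_{\L,\mathcal{R},\iota}$ by the construction of $\iota$, with equality forced by $\dim(\D)=\dim(\C)=\dim(\D_{\L,\mathcal{R},\iota})$. The principal obstacle is the identification $\pi_\C(\D)=\L'$, which underpins both the verification of $(\L')_{ad}\subseteq\L$ and the grade-based definition of $\iota$.
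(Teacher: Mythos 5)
Your proof is correct and follows the same overall strategy as the paper's: split the centralizing condition in $\C\bt\C^{\rev}$ into two modulus conditions and one phase condition, invoke Lemma~\ref{large sym} to obtain the gradings, read off $\iota$ from the phase condition, and close with a dimension count. The organizational difference is in how $(\L,\mathcal{R})$ are extracted from $\D$: the paper \emph{defines} $\L$ as the centralizer of the subcategory of left tensor factors of simples of $\D$ (so $\L'=\pi_\C(\D)$ is automatic by double centralizer, and $(\L')_{ad}\subseteq\L$ falls out of the modulus condition via Remark~\ref{recall Kad'}), and only at the very end deduces from the chain $\dim(\C)^2=\dim(\D)^2\le\dim(\L')\dim(\L\cap\L')\dim(\mathcal{R}')\dim(\mathcal{R}\cap\mathcal{R}')\le\dim(\C)^2$ that $\L$ and $\mathcal{R}$ are symmetric and that $\D$ fills out the whole direct sum. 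You instead define $\L=\D\cap(\C\bt\Vec)$ (making symmetry of $\L$ immediate) and then prove $\pi_\C(\D)=\L'$ by a separate dimension argument through Lemma~\ref{diamond dimensions}; your derivation of $(\L')_{ad}\subseteq\L$ via the $\be\subseteq Z\ot Z^*$ trick is likewise more hands-on than the paper's appeal to the modulus condition. Both routes work and cost about the same. Two small points to tighten: first, a symmetric $\D$ with $\dim(\D)=\dim(\C)$ satisfies $\D=\D'$ but is not ``Lagrangian'' in the paper's sense unless it is also isotropic (cf.\ Remark~\ref{max isotropic criterion}), so that word should be avoided; second, $|G_{\L'}|=|G_{\mathcal{R}'}|$ alone does not force a homomorphism to be bijective --- you also need surjectivity of $\iota$, which does follow from $\pi_{\C^{\rev}}(\D)=\mathcal{R}'$ (every grade of $G_{\mathcal{R}'}$ is realized by a right tensor factor), but this should be said explicitly.
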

\begin{proof}
 Let $X_1\bt Y_1$
and  $X_2\bt Y_2$ be two simple objects of $\C \bt \C^{\rev}$.
They centralize each other if and only if
\begin{gather}
\label{|L|}  |s_{X_1 X_2}| = d_{X_1}d_{X_2},\\
\label{|R|} |s_{Y_1 Y_2}| = d_{Y_1}d_{Y_2}, \quad \mbox{ and }\\ 
\label{LR=1} \frac{s_{X_1 X_2}}{d_{X_1}d_{X_2}} 
\frac{s_{Y_1 Y_2}}{d_{Y_1}d_{Y_2}}= 1.
\end{gather}

Let $\D$ be a symmetric subcategory of $\C \bt \C^{\rev}$ and let
$\L$ (respectively, $\mathcal{R}$) be the centralizers of fusion
subcategories of $\C$ (respectively, $\C^{\rev}$)
formed by left (respectively, right)
tensor factors of simple objects in $\D$. By conditions
\eqref{|L|}, \eqref{|R|}, and Remark~\ref{recall Kad'}
we must have ${\L'}_{ad} \subseteq \L$ and
${\mathcal{R}'}_{ad} \subseteq \mathcal{R}$. 
Hence, Lemma~\ref{large sym} gives gradings
$\L' =\oplus_{g \in G_\L}\,(\L')_g$ with $(\L')_1=\L'\cap \L$
and $\mathcal{R}' =\oplus_{g \in G_\mathcal{R}}\,
(\mathcal{R}')_g$ with $(\mathcal{R}')_1=\mathcal{R}'\cap \mathcal{R}$.
The condition \eqref{LR=1} gives an isomorphism
of bilinear forms $\iota: (G_{\L'}, b_{\L'}) 
\cong (G_{\mathcal{R}'}, b_{\mathcal{R}'})$
which is well-defined be the property that whenever
$X\in (\L')_g$ and
$Y\in \mathcal{R}'$ are simple objects such that  $X \bt Y\in \D$
then $Y\in (\mathcal{R}')_{\iota(g)}$.

Note that 
\begin{equation}
\label{D subset}
\D\subseteq 
\oplus_{g\in G_{\L'}} \, \L_g \bt \mathcal{R}_{\iota(g)},
\end{equation}
and hence 
\[
\dim(\D) \leq \dim(L\cap \L')\dim(\mathcal{R}
\cap \mathcal{R}') |G_{\L'}| = \dim(\L') \dim(\mathcal{R}
\cap \mathcal{R}'). 
\]
The same inequality holds with
$\L$ and $\mathcal{R}$ interchanged. Therefore,
\[
\dim(\C)^2 = \dim(\D)^2 
\leq   \dim(\L') \dim(\L'\cap \L) \dim(\mathcal{R}') \dim(\mathcal{R}
\cap \mathcal{R}') \leq  \dim(\C)^2.
\]
Here the first inequality becomes equality if and only if  
the inclusion in \eqref{D subset} is an equality
and the second  inequality becomes equality
if and only if $\L'\cap \L =\L$
and $\mathcal{R}'\cap \mathcal{R} =\mathcal{R}$, i.e., when 
$\L$ and $\mathcal{R}$ are symmetric.
\end{proof}

\begin{remark}
\label{max isotropic criterion}
The subcategory $\D_{\L,\mathcal{R},\iota}$ constructed
in Theorem~\ref{max symmetric  criterion} is Lagrangian
if and only if $\L$ and $\mathcal{R}$ are isotropic subcategories
of $\C$ and $\iota$ is an isomorphism of metric groups.
\end{remark}

\begin{corollary}
\label{group theor criterion}
Let $\C$ be a modular category. The following conditions 
are equivalent:
\begin{enumerate}
\item[(i)] $\C$ is group-theoretical.
\item[(ii)] There is a finite group $G$ and a 3-cocycle
$\omega \in Z^3(G,\, k^\times)$ such that 
$\Z(\C)\cong \Z(Vec_G^\omega)$ as a braided fusion category.
\item[(iii)] $\C \bt \C^{rev}$ contains a Lagrangian subcategory.
\item[(iv)] There is an isotropic subcategory $\E\subset \C$
such that $(\E')_{ad} \subseteq \E$.
\end{enumerate}
\end{corollary}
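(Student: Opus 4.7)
The plan is to establish the four conditions are pairwise equivalent through direct applications of the results developed earlier in the paper, leveraging the identification $\Z(\C)\cong \C\bt \C^{\rev}$ that holds for any modular category $\C$.

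The equivalence (i) $\Leftrightarrow$ (ii) is a consequence of the standard Morita theory of fusion categories: by definition $\C$ is group-theoretical precisely when it is Morita equivalent to some pointed fusion category $\Vec_G^\omega$, and two fusion categories are Morita equivalent if and only if their Drinfeld centers are equivalent as braided fusion categories. For (ii) $\Leftrightarrow$ (iii), I apply Theorem \ref{main1} to the modular category $\C\bt\C^{\rev}$: the theorem asserts this category is braided equivalent to $\Z(\Vec_G^\omega)$ for some $(G,\omega)$ if and only if it contains a Lagrangian subcategory, which combined with $\Z(\C)\cong \C\bt\C^{\rev}$ gives the equivalence.

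The equivalence (iii) $\Leftrightarrow$ (iv) is handled by Theorem \ref{max symmetric  criterion} together with Remark \ref{max isotropic criterion}. The forward direction is immediate: given any Lagrangian $\D_{\L,\mathcal{R},\iota}\subseteq \C\bt\C^{\rev}$, the classification produces an isotropic subcategory $\L\subseteq \C$ with $(\L')_{ad}\subseteq \L$, so we take $\E=\L$. For the reverse direction, given $\E$ as in (iv), I would set $\L=\mathcal{R}=\E$ and let $\iota$ be the inversion automorphism $g\mapsto g^{-1}$ on the grading group $G_{\E'}$ of Lemma \ref{large sym}, producing the candidate subcategory
\[
\D=\bigoplus_{g\in G_{\E'}}(\E')_g\bt(\E')_{g^{-1}}\subseteq \C\bt\C^{\rev}.
\]

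The main nontrivial verification is that $\D$ is Lagrangian. Isotropy comes down to the computation $\theta_{X\bt Y}=\theta_X\theta_Y^{-1}=q(g)q(g^{-1})^{-1}=1$ for simple $X\in(\E')_g$ and $Y\in(\E')_{g^{-1}}$, using the identity $q(g^{-1})=q(g)$ from Lemma \ref{large sym}(3) together with the fact that the twist on $\C^{\rev}$ is the inverse of that on $\C$. Since the components of the universal grading of $\E'$ all share the Frobenius-Perron dimension $\dim(\E)$, one obtains $\dim(\D)=|G_{\E'}|\dim(\E)^2=\dim(\E')\dim(\E)=\dim(\C)$, whence $\dim(\D)\dim(\D')=\dim(\C\bt\C^{\rev})$ forces $\D=\D'$.
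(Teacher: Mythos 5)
Your proposal is correct and follows essentially the same route as the paper: (i)$\Leftrightarrow$(ii) via \cite{ENO}, (ii)$\Leftrightarrow$(iii) by applying Theorem~\ref{main1} to $\Z(\C)\cong\C\bt\C^{\rev}$, and (iii)$\Leftrightarrow$(iv) via Theorem~\ref{max symmetric  criterion} together with Remark~\ref{max isotropic criterion}. The only deviation is that you take $\iota$ to be inversion on $G_{\E'}$ where the paper takes $\iota=\id_{G_{\E'}}$; both choices are valid, since $q(g^{-1})=q(g)$ makes inversion an automorphism of the metric group $(G_{\E'},q)$, and your direct verification that the resulting $\D$ is isotropic of dimension $\dim(\C)$ (hence Lagrangian, as isotropic subcategories are symmetric and the dimension forces $\D=\D'$) is sound.
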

\begin{proof}
The equivalence (i)$\equivalent$(ii) is a consequence of \cite{ENO},
(ii)$\equivalent$(iii) follows from Theorem~\ref{main1}, and
(iii)$\equivalent$(iv) follows from taking $\E= \mathcal{R}=\L$
and $\iota =\id_{G_{\E'}}$ in 
Theorem~\ref{max symmetric  criterion}, cf.\ 
Remark~\ref{max isotropic criterion}.
\end{proof}

Combining the above criterion with Theorem~\ref{dichotomy}
we obtain the following useful characterization of group-theoretical
modular categories.

\begin{corollary}
\label{useful grthr characterization}
A modular category $\C$ is group-theoretical if and only
if simple objects of $\C$ have integral dimension and
there is a symmetric subcategory $\L \subset \C$ 
such that $(\L')_{ad} \subseteq \L$.
\end{corollary}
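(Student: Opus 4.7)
The plan is to combine Corollary~\ref{group theor criterion} with Theorem~\ref{dichotomy}, applying the latter to the Drinfeld center $\Z(\C)\cong \C\boxtimes \C^{\rev}$.

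For the ``only if'' direction, if $\C$ is group-theoretical then Corollary~\ref{group theor criterion}, (i)$\Leftrightarrow$(iv), produces an isotropic subcategory $\E\subseteq\C$ with $(\E')_{ad}\subseteq\E$, and every isotropic subcategory is symmetric by Remark~\ref{rmk}(c). For integer dimensions, (i)$\Leftrightarrow$(ii) of the same corollary gives a braided equivalence $\Z(\C)\cong \Z(\Vec_G^\omega)$, whose simple objects have integer dimension; the embedding $X\mapsto X\boxtimes \mathbf{1}$ of $\C$ into $\C\boxtimes\C^{\rev}\cong \Z(\C)$ then forces the simples of $\C$ to have integer dimension as well.

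For the ``if'' direction, given a symmetric $\L\subset\C$ with $(\L')_{ad}\subseteq\L$ and integer-dimensional simples, it suffices by (ii)$\Rightarrow$(i) of Corollary~\ref{group theor criterion} to establish $\Z(\C)\cong \Z(\Vec_G^\omega)$ for some $(G,\omega)$. The strategy is to construct a symmetric subcategory of $\Z(\C)$ of Lagrangian dimension $\dim(\C)=\sqrt{\dim(\Z(\C))}$ and then invoke Theorem~\ref{dichotomy}. Concretely, I would apply Theorem~\ref{max symmetric  criterion} to the triple $(\L,\L,\id_{G_{\L'}})$, where the second copy of $\L$ is viewed as a subcategory of $\C^{\rev}$. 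Symmetry, the centralizer $\L'$, and the adjoint subcategory $(\L')_{ad}$ are all unchanged by reversing the braiding, so the required conditions on $\L\subset\C^{\rev}$ hold. The compatibility of bilinear forms reduces to the identity $b^{\C}_{\L'}(g,h)\cdot b^{\C^{\rev}}_{\L'}(g,h)=1$, which is automatic because reversing the braiding inverts every monodromy scalar. The resulting subcategory
\[
\D \;=\; \bigoplus_{g\in G_{\L'}} (\L')_g \boxtimes (\L')_g \;\subset\; \C\boxtimes \C^{\rev}\cong \Z(\C)
\]
has dimension $|G_{\L'}|\cdot\dim(\L)^2 = \dim(\L')\dim(\L) = \dim(\C)$.

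Finally, $\Z(\C)$ is modular of dimension $\dim(\C)^2$, has multiplicative central charge $1$ (Subsection~\ref{gauss}), and each of its simple objects $X\boxtimes Y$ has integer dimension $d(X)d(Y)$ by hypothesis on $\C$. Applying Theorem~\ref{dichotomy} to $\Z(\C)$ with the symmetric subcategory $\D$ of dimension $\dim(\C)$ leaves only the alternative that $\Z(\C)$ is braided equivalent to the center of a pointed fusion category, so $\Z(\C)\cong \Z(\Vec_G^\omega)$ for some finite group $G$ and $\omega\in Z^3(G,k^\times)$, and $\C$ is group-theoretical. The main point requiring care is verifying that the triple $(\L,\L,\id)$ really fits the hypotheses of Theorem~\ref{max symmetric  criterion}; once this is granted, the rest is a direct chain of implications.
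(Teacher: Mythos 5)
Your proof is correct and follows essentially the same route the paper intends: the paper justifies this corollary only by the phrase ``combining the above criterion with Theorem~\ref{dichotomy}'', and your argument --- producing the symmetric subcategory $\D_{\L,\L,\id}$ of $\Z(\C)\cong\C\bt\C^{\rev}$ of dimension $\dim(\C)$ via Theorem~\ref{max symmetric  criterion}, then applying Theorem~\ref{dichotomy} and Corollary~\ref{group theor criterion} --- is exactly that combination, with the details (the $b_{\L'}\cdot b_{\L'}^{-1}=1$ compatibility and the dimension count) filled in correctly.
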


\section{Pointed  modular  categories}\label{modpcat}

In this section we analyze the structure of pointed modular
categories, their central charges, and Lagrangian subgroups.
Recall that such categories canonically correspond to metric
groups \cite{Q}.

Let $G=\BZ/n\BZ$. The corresponding braided categories of the form
$\C(\BZ/n\BZ,q)$ are completely classified by numbers $\sigma
=q(1)$ such that $\sigma^n=1$ ($n$ is odd) or $\sigma^{2n}=1$ ($n$
is even). Then the braiding of objects corresponding to $0\le
a,b<n$ is the multiplication by $\sigma^{ab}$ and the twist of the
object $a$ is the multiplication by $\sigma^{a^2}$ (see \cite{Q}).
We will denote the category corresponding to $\sigma$ by
$\C(\BZ/n\BZ,\sigma)$.

\begin{example} \label{peven}
 (a) Let $G=\BZ/2\BZ$. There are 4 possible values of $\sigma$: $\pm 1, \pm i$.
The categories $\C(\BZ/2\BZ,\pm i)$ are modular with central
charge $\frac{1\pm i}{\sqrt{2}}$ and the categories
$\C(\BZ/2\BZ,\pm 1)$ are symmetric. The category $\C(\BZ/2\BZ,1)$
is isotropic and the category $\C(\BZ/2\BZ,-1)$ is not.

(b) Let $G=\BZ/4\BZ$. The twist of the object $2\in \BZ/4\BZ$ is
$\sigma^4=\pm 1$. If this twist is -1 then $\sigma$ is a primitive
8th root of 1 and the corresponding category is modular; its Gauss
sum is $1+\sigma +\sigma^4+\sigma^9=2\sigma$ and the central
charge is $\sigma$. Note that if $\sigma^4=1$ then the category
$\C(\BZ/4\BZ,\sigma)$ contains a nontrivial isotropic subcategory.

(c) Let $G=\BZ/2^k\BZ$ with $k\ge 3$. Since the twist of the
object $2^{k-1}$ is $\sigma^{2^{2k-2}}=1$, the category
$\C(\BZ/2^k\BZ,\sigma)$ always contains a nontrivial isotropic
subcategory.

(d) Let $G=\BZ/2\BZ \times \BZ/2\BZ$. There are five modular
categories with this group. We give for each of them the list of
values of $q$ on nontrivial elements of $G$:
\begin{enumerate}
\item $\C(\BZ/2\BZ \times \BZ/2\BZ,i)$:
the values of $q$ are $i, i, -1$, and the central charge is $i$.
\item $\C(\BZ/2\BZ \times \BZ/2\BZ,-i)$: the values of $q$ are $-i, -i, -1$,
and the central charge is $-i$.
\item $\C(\BZ/2\BZ \times \BZ/2\BZ,-1)$: the values of $q$ are $-1, -1, -1$,
and the central charge is $-1$.
\item $\C(\BZ/2\BZ \times \BZ/2\BZ,1)$: the values of $q$ are $i, -i, 1$,
and the central charge $1$.
\item The double of $\BZ/2\BZ $: the values of $q$ are $1,1,-1$, and
the central charge $1$.
\end{enumerate}
In this list, each category of central charge $1$ contains a
nontrivial isotropic subcategory while the others contain a
nontrivial symmetric (but not isotropic) subcategory.

(e) Let $G=\BZ/2\BZ \times \BZ/4\BZ$. Assume that the category
$\C(G,q)$ does not contain a nontrivial isotropic subcategory.
Then $\C(G,q)$ is equivalent to $\C(\BZ/4\BZ,\sigma)\boxtimes
\C(\BZ/2\BZ,\pm i)$ where $\sigma$ is a primitive $8$th root of 1.
The possible central charges are $\pm 1$ and $\pm i$.

(f) Let $G=\BZ/2\BZ \times \BZ/2\BZ \times \BZ/2\BZ$. Assume that
the category $\C(G,q)$ does not contain a nontrivial isotropic
subcategory. Then $\C(G,q)$ is equivalent to $\C(\BZ/2\BZ \times
\BZ/2\BZ,\sigma)\boxtimes \C(\BZ/2\BZ,\sigma ')$, where
$\sigma'=\pm i$ and $\sigma \ne 1, -\sigma'$.

\end{example}

\begin{example} \label{podd}
Let $p$ be an odd prime.

(a) Let $G=\BZ/p\BZ$. The category $\C(\BZ/p\BZ,\sigma)$ is
modular for $\sigma \ne 1$ and is isotropic for $\sigma =1$. The
central charge of the modular category $\C(\BZ/p\BZ,\sigma)$ is
$\pm 1$ for $p=1\mod{4}$ and $\pm i$ for $p=3\mod{4}$.

(b) Let $G=\BZ/p\BZ \times \BZ/p\BZ$. There are two
modular pointed categories with underlying group $G$.
One has central charge 1 (and is equivalent to the center
of $\BZ/p\BZ$), and the other one has central charge -1.
\end{example}

Recall that for a metric group $(G, q)$ its Gauss sum is
$\tau^\pm(G,\, q) =\sum_{a\in G}\, q(a)^{\pm 1}$. A subgroup $H$
of $G$ is called {\em isotropic} if $q|_H =1$. An isotropic
subgroup is called {\em Lagrangian} if $H^\perp = H$.

The following proposition is well known.

\begin{proposition}
\label{basis} Let $(G,\, q)$ be a non-degenerate metric group such
that $|G|=p^{2n}$ where $p$ is a prime number and $n\in \mathbb{Z}^+$.
Suppose that $\tau^\pm(G, q) = \sqrt{|G|}$ (i.e., the central charge of $G$
is $1$).  Then  $G$ contains a Lagrangian subgroup.
\end{proposition}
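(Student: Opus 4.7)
The plan is to prove the proposition by induction on $n$, with the trivial base case $n=0$ (take $L=\{0\}$). For $n\ge 1$, I locate a non-zero element $a\in G$ of order $p$ with $q(a)=1$ and then pass to a smaller metric group of central charge $1$ by quotienting.

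Given such an $a$, set $H=\langle a\rangle$, so $H$ has order $p$ and is isotropic. Non-degeneracy of the associated bilinear form $b$ forces $|H^\perp|=p^{2n-1}$, and $H\subseteq H^\perp$ by isotropy of $a$. On $\overline{G}:=H^\perp/H$ define $\bar q(\bar x):=q(x)$; this is well-defined because for $x\in H^\perp$ and $h\in H$ one has $q(x+h)=q(x)q(h)b(x,h)=q(x)$, using $q(h)=1$ and $b(x,h)=1$. Then $(\overline{G},\bar q)$ is a non-degenerate metric $p$-group of order $p^{2(n-1)}$, and $\C(\overline{G},\bar q)$ is precisely the Brugui\`eres--M\"uger modularization of the centralizer $\E'\subset \C(G,q)$ of the isotropic subcategory $\E$ corresponding to $H$. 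Theorem~\ref{tau after mod} therefore gives $\xi(\overline{G},\bar q)=\xi(G,q)=1$, and the inductive hypothesis produces a Lagrangian $\overline{L}\subseteq\overline{G}$; its preimage $L\subseteq H^\perp$ has order $p^n=\sqrt{|G|}$ and satisfies $q|_L=1$ together with $L=L^\perp$ by a direct check (using $L/H=\overline{L}=\overline{L}^\perp=L^\perp/H$ inside $\overline{G}$).

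The main obstacle is the existence of $a$ itself, or equivalently the claim that a non-degenerate metric $p$-group $(G,q)$ of order $p^{2n}$ with $n\ge 1$ and central charge $1$ cannot be anisotropic. Once any non-zero isotropic element $a_0$ of order $p^k$ is found, $a:=p^{k-1}a_0$ is isotropic of order $p$, since $q(ma_0)=q(a_0)^{m^2}$. So the task reduces to excluding anisotropy. For $p$ odd, a non-degenerate metric $p$-group decomposes orthogonally into rank-$1$ pieces; the anisotropic rank-$1$ pieces live only on $\BZ/p\BZ$ (for $k\ge 2$ the element $p^{k-1}\in\BZ/p^k\BZ$ satisfies $q(p^{k-1})=q(1)^{p^{2k-2}}=1$), and via the isomorphism $\mu_p\cong\mathbb{F}_p$ the direct sum on $(\BZ/p\BZ)^r$ becomes a quadratic form on $\mathbb{F}_p^r$, which is isotropic whenever $r\ge 3$. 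Thus an anisotropic $(G,q)$ has $|G|\le p^2$, and Example~\ref{podd}(b) shows that the unique anisotropic form on $(\BZ/p\BZ)^2$ has central charge $-1\ne 1$. For $p=2$, the classification of non-degenerate quadratic forms on finite abelian $2$-groups bounds the order of an anisotropic metric $2$-group by $2^3$; combined with the explicit central-charge computations in Example~\ref{peven}(b) and~\ref{peven}(d), the only relevant size $|G|=4$ yields central charges that are primitive $8$th roots, $\pm i$, or $-1$, never $1$, while $|G|=4^n$ with $n\ge 2$ exceeds the anisotropy bound $2^3$. Hence $(G,q)$ is never anisotropic under our hypotheses, the desired $a$ exists, and the induction concludes.
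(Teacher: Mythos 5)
Your proof is correct and shares the paper's overall skeleton (find a non-trivial isotropic element, pass to $H^\perp/H$, note via Theorem~\ref{tau after mod} that the central charge survives, and induct), but the way you exclude anisotropy is genuinely different from the paper's, most notably at $p=2$. The paper never invokes the classification of quadratic forms on finite abelian $2$-groups: for odd $p$ it argues directly that a cyclic summand $\BZ/p^k\BZ$ with $k>1$ yields an isotropic element, reduces to $(\BZ/p\BZ)^{2n}$, and applies Chevalley--Warning plus Example~\ref{podd}(b); for $p=2$ it splits off a non-degenerate $\BZ/4\BZ$ when the order-$2$ element of a $\BZ/4\BZ$-summand is not isotropic, shows by comparing central charges (both $\BZ/4\BZ$ and $\BZ/2\BZ$ anisotropic pieces contribute primitive $8$th roots) that the number of $\BZ/2\BZ$-summands would have to be odd, contradicting $|G|=2^{2n}$, and finally handles $(\BZ/2\BZ)^k$ by observing that $\tau^+(G,q)=2^{k/2}>1$ forces some non-identity $a$ with $q(a)=1$. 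You instead import the bound that an anisotropic non-degenerate metric $2$-group has order at most $2^3$ and then only check the order-$4$ cases against Examples~\ref{peven}(b),(d). That bound is true (one can verify it by checking that each of the five abelian groups of order $16$ admits no anisotropic non-degenerate form, e.g.\ on $(\BZ/4\BZ)^2$ the restriction of $q$ to the $2$-torsion is a homomorphism to $\{\pm1\}$ and so has a non-trivial kernel), but as stated it is an appeal to an external classification (Wall, Kawauchi--Kojima) that carries most of the weight of the $p=2$ case; you should either cite it precisely or include the order-$16$ verification, since the paper deliberately avoids it with a self-contained argument. Similarly, for odd $p$ your use of the orthogonal splitting into cyclic pieces is a (standard, easy for odd $p$) classification input where the paper gets by with the single observation about cyclic summands of order $p^k$, $k>1$. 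What your route buys is a cleaner logical structure --- ``anisotropic implies small, and small with $\xi=1$ is impossible'' --- at the cost of external input; the paper's route is longer but needs nothing beyond its own Section~5 examples.
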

\begin{proof}
It suffices to prove that  $G$ contains a non-trivial isotropic
subgroup $H$, then one can pass to $H^\perp/H$ and use induction.

Assume that $p$ is odd. Assume that $G$ contains a direct summand
$\BZ/p^k\BZ$ with $k>1$. Then the subgroup $\BZ/p\BZ \subset
\BZ/p^k\BZ$ is isotropic, since otherwise it is a non-degenerate metric subgroup
of $G$  and hence can be factored. Thus we are reduced to the case
when $G$ is a direct sum of $k$ copies of $\BZ/p\BZ$. When $k
> 2$, the quadratic form on $G$ is
isotropic (by the Chevalley - Waring theorem). Thus we are reduced
to the case $k=2$, which is easy (see Example \ref{podd} (b)).

Assume now that $p=2$. Again assume that $G$ contains a direct
summand $\BZ/2^k\BZ$ with $k>1$. Again the subgroup $\BZ/2\BZ
\subset \BZ/2^k\BZ$ is inside its orthogonal complement; moreover
it is isotropic if $k\ge 3$. If $k=2$ and the subgroup $\BZ/2\BZ
\subset \BZ/4\BZ$ is not isotropic then the subgroup $\BZ/4\BZ$ is
a non-degenerate metric subgroup and hence factors out;
let $G=G_1\oplus \BZ/4\BZ$ be the
corresponding decomposition of $G$. If $G_1$ contains $\BZ/2\BZ$
such that $\BZ/2\BZ \subseteq \BZ/2\BZ^\perp$ then we are done: if
this subgroup is not isotropic then the diagonal subgroup
$\BZ/2\BZ \subset \BZ/2\BZ \oplus \BZ/2\BZ \subset G_1\oplus
\BZ/4\BZ$ is isotropic. Thus $G_1$ is a sum of $\BZ/2\BZ$'s and
each summand is non-degenerate. But note that the central charge
of a non-degenerate metric group $\BZ/4\BZ$ is a primitive eighth root of $1$
(see Example \ref{peven} (b))
which is also the central charge of a non-degenerate metric
$\BZ/2\BZ$ (see Example \ref{peven} (a)). This
implies that the number of $\BZ/2\BZ$ summands in $G_1$ is odd
which is impossible since the order of $G$ is a square. Thus we
are reduced to the case when $G$ is a sum of $k$ copies of
$\BZ/2\BZ$. In this case all possible values of the  quadratic
form $q$ are $\pm 1, \pm i$ and  since $\tau^+ (G,q) = 2^{k/2}$,
there is at least one non-identity $a\in G$ with $q(a) =1$. So the
subgroup generated by $a$ is isotropic. The proposition is proved.
\end{proof}



\section{Nilpotent modular categories}
\label{appnilp}

In this section we prove our main results, stated in 1.1, and
derive a few corollaries.

Recall the definitions of $\K_{ad}$ and $\K^{co}$ from 2.5.

\begin{proposition}
\label{max symmetric}
Let $\C$ be a nilpotent modular  category.
Then for any maximal symmetric subcategory $\K$ of $\C$
one has $(\K')_{ad} \subseteq \K$. Equivalently,
there is a grading of $\K'$ such that
$\K$ is the trivial component:
\begin{equation}
\label{grading of K'}
\K' = \oplus_{g\in G}\, \K'_g,\qquad \K'_1 = \K.
\end{equation}
\end{proposition}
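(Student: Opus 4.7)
My plan is to modularize $\K'$ and prove that its modularization $\bar{\K'}$ is pointed. Since $\K$ is symmetric, $\K\subseteq\K'$, and by modularity $(\K')'=\K$, so $\K$ is precisely the symmetric center of the premodular category $\K'$. Let $F:\K'\to\bar{\K'}$ be the Brugui\`{e}res--M\"uger modularization, constructed via $A=\Fun(G)$ where $\K\cong\Rep(G)$. The category $\bar{\K'}$ is modular, and nilpotent as the image of a surjective tensor functor applied to the nilpotent $\K'$. I would first note that $(\K')_{ad}\subseteq\K$ if and only if $\bar{\K'}$ is pointed. This follows from $F((\K')_{ad})=(\bar{\K'})_{ad}$ (proved by surjectivity combined with the observation that $F$ sends each simple of $\K'$ into a single homogeneous component of the universal grading of $\bar{\K'}$) and from the fact that, for simple $X\in\K'$, $F(X)$ is a multiple of the unit of $\bar{\K'}$ if and only if $X\in\K$ (via the adjunction $\Hom_{\bar{\K'}}(F(X),\be)=\Hom_{\K'}(X,A)$ and the decomposition of $A=\Fun(G)$ into irreducibles of $\K$).

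To prove $\bar{\K'}$ is pointed, I argue by contradiction: suppose $(\bar{\K'})_{ad}\neq\Vec$. This is a non-trivial nilpotent fusion subcategory, so the last non-trivial term of its upper central series is pointed, giving $((\bar{\K'})_{ad})_{pt}\neq\Vec$. Since $\bar{\K'}$ is modular, \cite[Cor.~6.9]{GN} gives $(\bar{\K'})_{pt}=((\bar{\K'})_{ad})'$, so $\mathcal{S}:=((\bar{\K'})_{ad})_{pt}\subseteq(\bar{\K'})_{ad}\cap((\bar{\K'})_{ad})'$ is a non-trivial symmetric subcategory of $\bar{\K'}$. I then pull $\mathcal{S}$ back to the subcategory $\tilde{\mathcal{S}}\subseteq\K'$ of objects $X$ with $F(X)\in\mathcal{S}$. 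Since $\mathcal{S}$ is canonical it is $G$-stable, and via the equivariantization identification $\K'\cong(\bar{\K'})^G$ every simple $Y\in\mathcal{S}$ with $Y\neq\be$ lifts to a simple $X\in\tilde{\mathcal{S}}\setminus\K$; hence $\tilde{\mathcal{S}}\supsetneq\K$. Moreover $\tilde{\mathcal{S}}$ is symmetric in $\C$: the functor $F$ is faithful on $\Hom$-spaces because $A=\be\oplus A_0$ as an object of $\K$ (the trivial representation appears exactly once in $\Fun(G)$), so the map $\Hom_{\K'}(U,V)\to\Hom_{\K'}(U,V\otimes A)=\Hom_{\bar{\K'}}(F(U),F(V))$ sending $f\mapsto f\otimes\eta_A$ is injective; therefore $F(c^2_{X_1X_2})=c^2_{F(X_1)F(X_2)}=\id$ forces $c^2_{X_1X_2}=\id$ in $\K'$. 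This contradicts the maximality of $\K$, so $\bar{\K'}$ is pointed and $(\K')_{ad}\subseteq\K$.

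For the grading statement, write the universal grading of $\K'$ as $\K'=\bigoplus_{g\in U(\K')}(\K')_g$ with trivial component $(\K')_{ad}\subseteq\K$. If a simple $X\in\K$ lies in $(\K')_h$ and $Y\in(\K')_h$ is simple, then $X^*\otimes Y\in(\K')_{ad}\subseteq\K$, whence $Y\subseteq X\otimes(X^*\otimes Y)\in\K$, showing $\K$ is a union of homogeneous components. Passing to the induced grading by $U(\K')/H$ with $H=\{h\in U(\K'):(\K')_h\subseteq\K\}$ exhibits $\K$ as the trivial component. The main obstacle in the argument is producing a non-trivial symmetric subcategory inside a hypothetical non-pointed $\bar{\K'}$ to contradict maximality; this is resolved by combining the fact that any non-trivial nilpotent fusion category contains a non-trivial pointed subcategory (via the upper central series) with the modularity identity $(\bar{\K'})_{pt}=((\bar{\K'})_{ad})'$, which forces that pointed part, once it lies inside the adjoint, to be symmetric.
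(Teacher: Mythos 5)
Your strategy (de-equivariantize $\K'$ by its M\"uger center $\K$ and show the quotient is pointed) is genuinely different from the paper's proof, which never leaves $\C$: the paper assumes $(\K')_{ad}\not\subseteq\K$ and strictly enlarges $\K$ to the symmetric subcategory $(\K^{co}\cap(\K')_{ad})\vee\K$, using the identity $(\K_{ad})'=(\K')^{co}$, the modular law for the lattice of fusion subcategories, and the upper central series of $\K'$. Your argument, however, has a gap at the very first step. The Brugui\`{e}res--M\"uger modularization of $\K'$ exists only when its M\"uger center $\K$ is \emph{isotropic} (Tannakian, i.e.\ of the form $\Rep(G,z)$ with $z=1$), whereas a maximal \emph{symmetric} subcategory of a nilpotent modular category need not be Tannakian. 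Already in the paper's Example 5.1(d), item (3) --- the pointed modular category $\C(\BZ/2\BZ\times\BZ/2\BZ,\,q)$ with $q\equiv -1$ on nontrivial elements --- every maximal symmetric subcategory is equivalent to super-vector spaces $\Rep(\BZ/2\BZ,z)$ with $z\neq 1$; there is no $G$ with $\K\cong\Rep(G)$ as a braided category, the algebra $\Fun(G)$ is not commutative with respect to the super-braiding, and no modularization of $\K'$ exists. You would at minimum need a separate treatment of the super-Tannakian case, where the facts you invoke (modularity of $\bar{\K'}$, the identity $(\bar{\K'})_{pt}=((\bar{\K'})_{ad})'$ from \cite{GN}) are not available as stated.

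There is a second gap in the implication ``$\bar{\K'}$ pointed $\Rightarrow(\K')_{ad}\subseteq\K$''. For a simple $X\in\K'$ the simple constituents of $F(X)$ form a single $G$-orbit, which may contain more than one isomorphism class of invertibles; in that case $F(X\otimes X^*)=F(X)\otimes F(X)^*$ contains nontrivial invertible summands, so $X\otimes X^*\notin\K$ even though $\bar{\K'}$ is pointed. Your parenthetical justification --- that $F$ sends each simple into a single homogeneous component of the universal grading of $\bar{\K'}$ --- is, for a pointed target, precisely the assertion that each such orbit is a singleton (each simple of a pointed category is its own homogeneous component), so this is assumed rather than proved; closing it requires showing that a nontrivial $G$-action on the invertibles of $\bar{\K'}$ would again contradict maximality of $\K$. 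The rest of your argument is sound where it applies: producing a nontrivial symmetric subcategory from $((\bar{\K'})_{ad})_{pt}$ via nilpotency and $(\bar{\K'})_{pt}=((\bar{\K'})_{ad})'$, pulling it back along the faithful braided functor $F$ to contradict maximality, and the reduction of the grading statement to $(\K')_{ad}\subseteq\K$ all check out.
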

\begin{proof}
The two conditions are equivalent since by
\cite{GN} the adjoint subcategory is the trivial component of the 
universal grading.

Let $\K$ be a symmetric subcategory of $\C$, 
i.e., such that $\K \subseteq \K'$.
Assume that $(\K')_{ad}$  is not contained in  $\K$.
It suffices to show that $\K$ is not maximal.

Let $\E = (\K^{co}  \cap (\K')_{ad})\vee \K$.
Clearly, $\K \subseteq \E \subseteq \K'$.
We have
\begin{eqnarray*}
\E' &=& ( (\K^{co}  \cap (\K')_{ad})\vee \K )' \\
&=&  \K' \cap ((\K^{co})'  \vee  ((\K')_{ad})') \\
&=&  \K'\cap ((\K')_{ad}  \vee  \K^{co})  \\
&=&  (\K' \cap  \K^{co}) \vee  (\K')_{ad},
\end{eqnarray*}
where we used the modular law  of the lattice $L(\C)$ from
Lemma~\ref{Dedekind}. Since $\K \subseteq  \K' \cap  \K^{co}$ and
$\K^{co}  \cap (\K')_{ad} \subseteq  (\K')_{ad}$ we see that $\E
\subseteq \E'$, i.e., $\E$ is symmetric.


Let $n$ be the largest positive integer such that $(\K')^{(n)}
\not\subseteq \K$.  Such $n$ exists by our assumption and the
nilpotency of $\K'$. We claim that $(\K')^{(n)} \subseteq
\K^{co}$. Indeed,
\[
(\K')^{(n)} \subseteq  ((\K')^{(n+1)})^{co}  \subseteq \K^{co}
\]
since $\D \subseteq (\D_{ad})^{co}$ for every subcategory 
$\D \subseteq \C$.
Therefore, $\K^{co}  \cap (\K')^{(n)} = (\K')^{(n)}$ 
is not contained in $\K$ and
\[
\K \subsetneq (\K^{co}  \cap (\K')^{(n)})\vee \K 
\subseteq (\K^{co}  \cap (\K')_{ad})\vee \K  =\E,
\]
which completes the proof.
\end{proof}

Recall that in a fusion category whose dimension is  an {\em odd}
integer the dimensions of all objects are automatically integers
\cite[Corollary 3.11]{GN}.

\begin{corollary}
\label{nilp mod cat are gr thr}
A nilpotent modular category $\C$ with integral dimensions of simple
objects is group-theoretical.
\end{corollary}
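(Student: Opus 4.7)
The plan is to invoke the two tools already in hand and see that the corollary is essentially immediate. First, since $\C$ is a fusion category (hence $L(\C)$ is a finite lattice) and the trivial subcategory $\Vec \subseteq \C$ is symmetric, there exists a maximal symmetric subcategory $\K \subseteq \C$ (pick any symmetric subcategory of maximal Frobenius--Perron dimension).

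Next, apply Proposition~\ref{max symmetric}: since $\C$ is a nilpotent modular category and $\K$ is maximal symmetric, we obtain $(\K')_{ad} \subseteq \K$. This produces precisely the kind of subcategory demanded by the group-theoretical criterion.

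Finally, apply Corollary~\ref{useful grthr characterization}: we are assuming that every simple object of $\C$ has integer dimension, and we have just produced a symmetric subcategory $\L = \K$ with $(\L')_{ad} \subseteq \L$, so $\C$ is group-theoretical. There is no serious obstacle; the entire content has been pushed into Proposition~\ref{max symmetric} (which uses nilpotency via the upper central series together with Dedekind's modular law to promote a maximal symmetric subcategory to one whose ``dual quotient'' $\K'/\K$ has trivial adjoint subcategory) and into Corollary~\ref{useful grthr characterization} (which comes from the Lagrangian/centre reconstruction in Theorem~\ref{main1}).
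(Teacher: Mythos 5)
Your proposal is correct and is exactly the paper's own proof: the authors also deduce the corollary immediately from Proposition~\ref{max symmetric} applied to a maximal symmetric subcategory together with Corollary~\ref{useful grthr characterization}. Your added remark that such a maximal $\K$ exists (finiteness of the subcategory lattice, with $\Vec$ symmetric) is a harmless explicit touch the paper leaves implicit.
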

\begin{proof}
This follows immediately from 
Corollary~\ref{useful grthr characterization}
and Proposition~\ref{max symmetric}.
\end{proof}

\begin{remark}
\label{pointed quotient+grading}
It follows from Corollary~\ref{group theor criterion}
that a nilpotent modular category $\C$ with integral dimensions
of simple objects contains an isotropic subcategory $\E$ such
that $(\E')_{ad}\subseteq \E$. 
 The corresponding grading
\begin{equation}
\label{grading I wanted}
\E' =\oplus_{h\in H}\, \E'_h,\qquad \E'_1=\E,
\end{equation}
gives rise to a non-degenerate quadratic form $q$ on $H$
defined by $q(h) = \theta_V$ for any non-zero $V\in \C_h$.
We have a braided equivalence $\bar{\E'}\cong \C(H, q)$.

We may assume that $\E$ is maximal among
isotropic subcategories of $\C$. In this case,
Proposition~\ref{canonical modularization} implies
that the isomorphism class of the above  metric group $(H, q)$
does not depend on the choice of
the maximal isotropic subcategory $\E$.
\end{remark}

\begin{corollary}\label{charge}
The central charge of a modular nilpotent category 
with integer dimensions
of objects is always an $8$th root of $1$.
Moreover, the central charge 
of a modular $p-$category is $\pm 1$ if $p=1\,mod\, 4$
and $\pm 1$, $\pm i$ if $p=3\,mod\,4$. 
The central charge of a modular $p-$category of
dimension $p^{2k}$, $k\in \BZ^+$ with odd $p$ is $\pm 1$.
\end{corollary}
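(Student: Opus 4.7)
My plan is to reduce all three assertions to a classical Gauss sum computation on a pointed modular category. Since $\C$ is nilpotent modular with integer dimensions of simple objects, Remark \ref{pointed quotient+grading} provides a maximal isotropic subcategory $\E \subset \C$ whose modularization $\bar{\E'}$ is braided equivalent to a pointed modular category $\C(H, q)$ for a canonically determined non-degenerate metric group $(H, q)$. Combined with the invariance Theorem \ref{tau after mod}, this gives $\xi(\C) = \xi(\bar{\E'}) = \xi(\C(H, q)) = |H|^{-1/2} \sum_{h \in H} q(h)$, so the problem becomes a question about normalized quadratic Gauss sums of non-degenerate metric groups.

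For the first assertion I would cite the classical Milgram formula, which states that the normalized Gauss sum of any non-degenerate metric group is an eighth root of unity. If one prefers to avoid an external citation, the multiplicativity \eqref{multiplicativity of tau} reduces the problem to the elementary metric groups listed in Examples \ref{peven} and \ref{podd}, each of which has central charge visibly an eighth root of unity.

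For the second and third assertions $(H, q)$ is a metric $p$-group. For odd $p$ the quadratic form diagonalizes, so $(H, q)$ decomposes orthogonally as $\bigoplus_i (\BZ/p^{a_i}\BZ, q_i)$. A standard evaluation of the quadratic Gauss sum $\sum_{x=0}^{p^{a_i}-1} q_i(x)$ shows that each cyclic factor contributes a central charge of $\pm 1$ when $p^{a_i} \equiv 1 \pmod{4}$ and $\pm i$ when $p^{a_i} \equiv 3 \pmod{4}$. For $p \equiv 1 \pmod{4}$ every factor is of the first kind, forcing $\xi(\C) \in \{\pm 1\}$; for $p \equiv 3 \pmod{4}$ both kinds can occur, yielding $\xi(\C) \in \{\pm 1, \pm i\}$.

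For the third assertion, the hypothesis $\dim(\C) = p^{2k}$ combined with the identity $|H| = \dim(\C)/\dim(\E)^2$ forces $|H|$ to be an even power of $p$, so $\sum_i a_i$ is even, and consequently the number of indices $i$ with $a_i$ odd is even. The $\pm i$ contributions then pair off to produce $\pm 1$, so $\xi(\C) = \pm 1$. The only substantive technical step is the quadratic Gauss sum evaluation for cyclic $p$-groups; everything else is bookkeeping using Theorem \ref{tau after mod} and Remark \ref{pointed quotient+grading}.
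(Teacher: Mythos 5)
Your proposal is correct and follows essentially the same route as the paper: the paper's proof is precisely the reduction via Remark~\ref{pointed quotient+grading} and Theorem~\ref{tau after mod} to the central charge of a pointed modular category $\C(H,q)$, after which it simply cites Examples~\ref{peven}--\ref{podd}. Your diagonalization and explicit quadratic Gauss sum evaluation for metric $p$-groups (and the optional appeal to Milgram's formula for the first claim) merely fills in the number-theoretic bookkeeping that the paper delegates to those examples, and your parity argument for the third claim is sound since $|H|=\dim(\C)/\dim(\E)^2$ is indeed an even power of $p$.
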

\begin{proof}
By Remark~\ref{pointed quotient+grading} and 
Theorem~\ref{tau after mod}
the central charge always equals the
central charge of some pointed category,
so the first claim follows
from Examples~\ref{peven}-\ref{podd}.
The second and third claims follow from Example~\ref{podd}.
\end{proof}

\begin{theorem}
\label{factor in a double}\label{main2}
Let $\C$ be a  modular category with
integral dimensions of simple objects. Then $\C$  is nilpotent
if and only if there exists a pointed
modular category $\M$ such that $\C \bt \M$ is equivalent
(as a braided fusion category)  to $\Z(\Vec_G^\omega)$, where $G$
is a nilpotent group.
\end{theorem}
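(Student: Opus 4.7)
The plan is to apply Theorem~\ref{main1}, which translates the desired equivalence $\C \bt \M \cong \Z(\Vec_G^\omega)$ into the existence of a Lagrangian subcategory of $\C \bt \M$. Thus, for the forward direction I need to find a pointed modular $\M$ making $\C \bt \M$ hyperbolic and then verify that the resulting group $G$ is nilpotent. The key input is Remark~\ref{pointed quotient+grading}: since $\C$ is nilpotent modular with integral dimensions, it contains a maximal isotropic subcategory $\E$ with $(\E')_{ad} \subseteq \E$, giving a grading $\E' = \bigoplus_{h \in H} \E'_h$ with $\E'_1 = \E$ and a braided equivalence $\bar{\E'} \cong \C(H, q)$, where the non-degenerate quadratic form $q$ satisfies $\theta_V = q(h) \cdot \id_V$ for every nonzero simple $V \in \E'_h$.

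Take $\M := \C(H, q^{-1})$; this is pointed modular since the bilinear form associated to $q^{-1}$ is non-degenerate. For each $h \in H$, let $\chi_h \in \M$ denote the invertible simple object corresponding to $h$ (with $\theta_{\chi_h} = q(h)^{-1}$), and define
$$
\L := \bigoplus_{h \in H}\, \E'_h \bt \chi_h \ \subset\ \C \bt \M.
$$
Closure under tensor product and duals follows from $\E'_h \bt \E'_{h'} \subseteq \E'_{hh'}$ and $\chi_h \bt \chi_{h'} = \chi_{hh'}$, so $\L$ is a fusion subcategory. For any simple $X \in \E'_h$ the twist on $X \bt \chi_h$ computes to
$$
\theta_{X \bt \chi_h} = \theta_X \cdot \theta_{\chi_h} = q(h) \cdot q(h)^{-1} = \id,
$$
so $\L$ is isotropic. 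Using $\dim(\E)\dim(\E') = \dim(\C)$ and $\dim(\bar{\E'}) = |H|$, one obtains $\dim(\E') = \dim(\E) \cdot |H|$ and therefore
$$
\dim(\L)^2 = \dim(\E')^2 = \dim(\E)^2 \cdot |H|^2 = \dim(\C) \cdot |H| = \dim(\C \bt \M),
$$
so $\L$ is Lagrangian (an isotropic subcategory is symmetric, hence of dimension at most $\sqrt{\dim(\C\bt\M)}$, with equality precisely when it is Lagrangian). Theorem~\ref{main1} then supplies a braided equivalence $\C \bt \M \cong \Z(\Vec_G^\omega)$ with $\Rep(G) \cong \L$.

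Finally, $G$ is nilpotent: $\M$ is pointed (nilpotent of class~$1$) and $\C$ is nilpotent by hypothesis, so $\C \bt \M$ is nilpotent (since $(\C \bt \M)_{ad} = \C_{ad} \bt \M_{ad}$ and iteration gives $(\C \bt \M)^{(n)} = \C^{(n)} \bt \M^{(n)}$), whence the fusion subcategory $\L \cong \Rep(G)$ is nilpotent, forcing $G$ to be nilpotent. For the converse, decompose a nilpotent $G$ as $G = \prod_p G_p$ into its $p$-Sylow subgroups; as the orders $|G_p|$ are pairwise coprime, the class $[\omega] \in H^3(G, k^\times)$ splits as $[\omega] = \prod_p [\omega_p]$ (cross Künneth terms vanish), yielding $\Vec_G^\omega \cong \bt_p\, \Vec_{G_p}^{\omega_p}$ and hence $\Z(\Vec_G^\omega) \cong \bt_p\, \Z(\Vec_{G_p}^{\omega_p})$. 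Each factor has $p$-power dimension and is thus nilpotent by Example~2.3, so $\C \bt \M$ is nilpotent, and the fusion subcategory $\C \subset \C \bt \M$ (via $X \mapsto X \bt \be$) is nilpotent as well. The main technical obstacle is the construction of the Lagrangian $\L$: it hinges on the grading from Remark~\ref{pointed quotient+grading} and the exact identification of the twists on the graded components so that they cancel precisely against the twists of $\M = \C(H, q^{-1})$.
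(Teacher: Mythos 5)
Your proposal is correct and follows essentially the same route as the paper: take the isotropic $\E$ with $(\E')_{ad}\subseteq\E$ from Remark~\ref{pointed quotient+grading}, set $\M=\C(H,q^{-1})$ (the reverse of $\bar{\E'}$), form the diagonal Lagrangian $\oplus_{h}\,\E'_h\bt\chi_h$ in $\C\bt\M$, and invoke Theorem~\ref{main1}. You merely spell out a few steps the paper leaves implicit (the dimension count certifying the subcategory is Lagrangian, the nilpotency of $G$ via $\Rep(G)\cong\L$, and the Sylow/K\"unneth splitting in the converse direction), all of which are fine.
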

\begin{proof}
Note that for a nilpotent group $G$ the category $\Z(\Vec_G^\omega)$
is a tensor product of modular $p$-categories and, hence, is nilpotent.
So if  $\C \bt \M \cong \Z(\Vec_G^\omega)$ then $\C$ is nilpotent
(as a subcategory of a nilpotent category).

Let us prove the converse implication.
Pick an isotropic subcategory $\E \subset \C$
such that $(\E')_{ad}\subseteq \E$ (such a subcategory
exists by  Remark~\ref{pointed quotient+grading}).
There is a metric group $(H,q)$ such that $\bar{\E'}\cong \C(H,q)$.
Let  $\E' = \oplus_{h\in H}\, \E'_h$ , where $\E_1 = \E$ 
be the corresponding grading from \eqref{grading I wanted}.

Let $\M$ be the reversed category of $\bar{\E'}$
(i.e., with the opposite braiding and twist).
Then $\M \cong \C(H,q^{-1})$ and  
$\xi(\M) = \xi(\C(H,q))^{-1} = \xi(\C)^{-1}$
by Theorem~\ref{tau after mod}.

The  modular category $\C_{new} = \C \bt \M$ 
is nilpotent and $\xi(\C_{new}) =1$.
The category $\E_{new}:= \oplus_{h\in H}\, \E_h\boxtimes h$
is a Lagrangian subcategory of $\C_{new}$ and
the required statement  follows from
Theorem~\ref{main1}.
\end{proof}

Let $p$ be a prime number.

\begin{theorem}
\label{main3}
A modular category $\C$
is equivalent to the center of a fusion category of the form
$\Vec_G^\omega$ with $G$ being a $p$-group 
if and only if it has the following properties:
\begin{enumerate}
\item[(i)] the Frobenius-Perron dimension of $\C$ is 
$p^{2n}$ for some $n\in \mathbb{Z^+}$,
\item[(ii)] the dimension of every simple object of $\C$ is an integer,
\item[(iii)] the multiplicative central charge of $\C$ is $1$.
\end{enumerate}
\end{theorem}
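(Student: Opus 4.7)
The plan is to handle the two directions separately. The forward direction is routine: if $\C \cong \Z(\Vec_G^\omega)$ for a $p$-group $G$ of order $p^n$, then $\FPdim(\C)=|G|^2=p^{2n}$, the simple objects are representations of the twisted double $D^\omega(G)$ and hence have integer dimensions, and the multiplicative central charge of a Drinfeld center equals $1$ (as noted in Subsection~\ref{gauss}).

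For the converse, the idea is to produce a Lagrangian subcategory of $\C$ and invoke Theorem~\ref{main1}. By~(i) $\C$ is a modular $p$-category, hence nilpotent, and combined with~(ii) Remark~\ref{pointed quotient+grading} furnishes a maximal isotropic subcategory $\E\subset\C$ (with $(\E')_{ad}\subseteq\E$) whose modularization is of the form $\bar{\E'}\cong \C(H,q)$ for a metric $p$-group $(H,q)$.

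The main step is to show $\E=\E'$. By Remark~\ref{newrmk}, $|H|=\FPdim(\bar{\E'})=p^{2n}/\FPdim(\E)^2=p^{2m}$ for some $m\geq 0$, and by Theorem~\ref{tau after mod} the central charge of $\bar{\E'}$ equals $\xi(\C)=1$. Assume for contradiction that $m\geq 1$. Proposition~\ref{basis} then provides a nontrivial isotropic (in fact Lagrangian) subgroup of $(H,q)$, which corresponds to a nontrivial isotropic pointed subcategory $\K\subset\bar{\E'}$. Its preimage under the modularization functor $F:\E'\to\bar{\E'}$ is a fusion subcategory of $\E'$ strictly containing $\E$; it is isotropic because the twist on $\bar{\E'}$ was defined to satisfy $\phi_{F(X)}=\theta_X$ for every simple $X\in\E'$ (see Subsection~\ref{subsect modularization}). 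This contradicts the maximality of $\E$, so $m=0$ and $\E$ is Lagrangian. An application of Theorem~\ref{main1} then yields $\C\cong \Z(\Vec_G^\omega)$, and the dimension count $|G|^2=p^{2n}$ forces $G$ to be a $p$-group.

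The most delicate point is the lifting step in the previous paragraph: verifying that the preimage under $F$ of an isotropic pointed subcategory of $\bar{\E'}$ is genuinely isotropic in $\E'$ (rather than merely symmetric). Once this is in hand, the proof reduces to a combination of three results that have already been established: Proposition~\ref{basis} on the metric-group side, the invariance of the central charge under modularization (Theorem~\ref{tau after mod}), and the Lagrangian-to-twisted-double dictionary of Theorem~\ref{main1}.
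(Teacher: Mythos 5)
Your proof is correct and follows essentially the same route as the paper's: both arguments rest on Remark~\ref{pointed quotient+grading}, the invariance of the central charge under modularization (Theorem~\ref{tau after mod}), Proposition~\ref{basis} applied to the metric $p$-group $(H,q)$, and the dictionary of Theorem~\ref{main1}. The only cosmetic difference is that the paper directly exhibits $\oplus_{h\in H_0}\,\E'_h$ (for $H_0$ a Lagrangian subgroup of $(H,q)$) as a Lagrangian subcategory of $\C$, whereas you package the same construction as a contradiction to the maximality of $\E$; the ``delicate'' lifting step you flag is exactly the identity $\theta_X=q(h)$ for $X\in\E'_h$ recorded in that remark.
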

\begin{proof}
It is clear that for any finite $p$-group $G$ and 
$\omega\in Z^3(G,\, k^\times)$
the modular category $\Z(\Vec_G^\omega)$ 
satisfies properties (i) and (ii).
The central charge of $\Z(\Vec_G^\omega)$ 
equals  $1$ by \cite[Theorem 1.2]{Mu4}.

Let us  prove the converse. Suppose that $\C$ satisfies conditions
(i), (ii), and (iii). 
Let $\E$ be an isotropic subcategory of $\C$ such
that $(\E')_{ad}\subseteq \E$ (such an $\E$ exists by
Remark~\ref{pointed quotient+grading}). There is a grading
$\E' =\oplus_{h\in H}\, \E'_h$ with $\E'_1=\E$
and $\theta$ being constant on each $\E'_h,\, h\in H$. 
Note that $H$ is a metric $p$-group whose
order is a square. By Proposition~\ref{basis}
it contains a Lagrangian subgroup $H_0$, whence
$\oplus_{h\in H_0}\, \E'_h$ is a Lagrangian subcategory of $\C$.

Thus, $\C \cong \Vec_G^\omega$  for some $G$ and $\omega$
by Theorem~\ref{main1}.
Since $|G|^2= \dim(\Vec_G^\omega) =\dim(\C)$ it follows
that $G$ is a $p$-group.
\end{proof}

Finally, we apply our results  to show that certain
fusion categories (more precisely, representation categories of certain
semisimple quasi-Hopf algebras) are group-theoretical
and to obtain a categorical analogue of the Sylow decomposition
of nilpotent groups.

\begin{corollary}
\label{cor0}
Let $\C$ be a fusion category with integral dimensions of simple
objects and such that $\Z(\C)$ is nilpotent.
Then $\C$ is group-theoretical.
\end{corollary}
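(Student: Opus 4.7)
The aim is to show $\Z(\C)\cong \Z(\Vec_G^\omega)$ for some finite group $G$ and cocycle $\omega$, since this is exactly the content of $\C$ being group-theoretical. By Theorem~\ref{main1}, this reduces to exhibiting a Lagrangian subcategory in $\Z(\C)$. First I would collect the key structural properties of $\Z(\C)$: it is modular, nilpotent by hypothesis, pseudo-unitary (inherited from $\C$), and has integer Frobenius-Perron dimensions of simple objects, because the forgetful tensor functor $\Z(\C)\to\C$ preserves FP dimensions while every simple of $\C$ has integer dimension. Moreover, $\xi(\Z(\C))=1$ by M\"uger's theorem \cite{Mu4}.

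Next I would apply the $p$-primary decomposition of nilpotent braided fusion categories (Theorem 1.1 of Subsection~\ref{main results}) to write $\Z(\C)\cong \boxtimes_p\,\mD_p$, where each $\mD_p$ is a modular $p$-category with integer simple dimensions. Since $\FPdim(\Z(\C))=\FPdim(\C)^2$ is a square, each $\FPdim(\mD_p)=p^{2n_p}$ is a square as well. Granting that every $\mD_p$ has central charge $1$, Theorem~\ref{main3} identifies each $\mD_p\cong \Z(\Vec_{G_p}^{\omega_p})$ for some $p$-group $G_p$, and assembling the pieces produces $\Z(\C)\cong \Z(\Vec_G^\omega)$ with $G=\prod_p G_p$ nilpotent, completing the argument.

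The crucial and hardest step is to verify that each $\mD_p$ has central charge $1$ individually. Multiplicativity of $\xi$ only gives $\prod_p \xi(\mD_p)=1$, which when combined with Corollary~\ref{charge} (restricting $\xi(\mD_p)\in\{\pm 1\}$ for odd $p$ and $\xi(\mD_2)$ to an eighth root of unity) is not by itself enough to force individual triviality. The additional input must come from the fact that $\Z(\C)$ is specifically the center of a fusion category, not an abstract nilpotent modular category. The strategy I would pursue is to show that this extra structure lifts the $p$-primary decomposition of $\Z(\C)$ to a Morita-equivalence decomposition of $\C$ itself into fusion categories of $p$-power Frobenius-Perron dimension, realizing each $\mD_p$ as the center of such a factor; a second application of \cite{Mu4} then forces $\xi(\mD_p)=1$. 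Establishing this center-compatible lifting of the $p$-primary decomposition is the main obstacle.
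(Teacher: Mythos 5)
Your reduction to ``$\Z(\C)\cong\Z(\Vec_G^\omega)$'' is legitimate (this is the equivalence (i)$\equivalent$(ii) of Corollary~\ref{group theor criterion}), and the preliminary observations about $\Z(\C)$ (modularity, integrality of dimensions via the forgetful functor, $\xi(\Z(\C))=1$, the Sylow decomposition $\Z(\C)\cong\boxtimes_p\,\mD_p$ into modular $p$-categories of square dimension) are all correct. But the argument has a genuine gap exactly where you flag it: you need $\xi(\mD_p)=1$ for \emph{each} $p$ in order to invoke Theorem~\ref{main3} factorwise, and multiplicativity of $\xi$ together with Corollary~\ref{charge} does not give this --- e.g.\ two odd-prime factors could each have central charge $-1$. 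The ``center-compatible lifting'' you propose (splitting $\C$ itself into Morita factors whose centers are the $\mD_p$'s) is not established anywhere in the paper and is not a routine consequence of its results; a decomposition of $\Z(\C)$ does not obviously descend to a decomposition or Morita-decomposition of $\C$. As written, the proof does not close.

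The paper sidesteps this entirely and never needs to show that $\Z(\C)$ itself is hyperbolic. Its proof is three lines: by Corollary~\ref{nilp mod cat are gr thr} (which rests on Proposition~\ref{max symmetric} --- every nilpotent modular category has a maximal symmetric subcategory $\K$ with $(\K')_{ad}\subseteq\K$ --- combined with the criterion of Corollary~\ref{useful grthr characterization}), the nilpotent modular category $\Z(\C)$ is group-theoretical; since group-theoreticity is preserved under duality and $\C\bt\C^{\rev}$ is dual to $\Z(\C)$, the category $\C\bt\C^{\rev}$ is group-theoretical; and since group-theoreticity passes to fusion subcategories, so is $\C$. Note the key point of contrast: ``$\Z(\C)$ is group-theoretical'' means $\Z(\C)\bt\Z(\C)^{\rev}$ contains a Lagrangian subcategory (Corollary~\ref{group theor criterion}(iii)), which only requires a \emph{symmetric} subcategory $\L\subseteq\Z(\C)$ with $(\L')_{ad}\subseteq\L$ --- no isotropy and no central-charge bookkeeping --- whereas your route demands a Lagrangian subcategory of $\Z(\C)$ itself, a strictly stronger and harder statement. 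If you want to salvage your approach you would have to prove the lifting claim; the paper's hereditary-properties argument is both shorter and avoids the issue.
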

\begin{proof}
By Corollary~\ref{nilp mod cat are gr thr}  the category
$\Z(\C)$ is group-theoretical.
Hence, $\C \bt \C^\rev$ is group theoretical
(as a dual category of $\Z(\C)$, see \cite{ENO}).
Therefore, $\C$ is group-theoretical (as a fusion subcategory
of $\C \bt \C^\rev$).
\end{proof}

\begin{corollary}
\label{cor1} Let $\C$ be a fusion category of dimension $p^n,\,
n\in \mathbb{Z^+}$, such that all objects of $\C$ have integer
dimension (this is automatic if $p>2$). Then $\C$ is
group-theoretical.
\end{corollary}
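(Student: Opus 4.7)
The plan is to reduce this directly to Corollary~\ref{cor0} by verifying its two hypotheses for a fusion category $\C$ of dimension $p^n$. The claim splits naturally into the case $p$ odd (where the integrality of dimensions is automatic) and the case $p=2$ (where it is a standing hypothesis), but both cases are handled uniformly once we check that the hypotheses of Corollary~\ref{cor0} hold.

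First I would justify the parenthetical remark that when $p > 2$ the integrality of dimensions of objects of $\C$ is automatic. Since $\dim(\C) = p^n$ is odd, this is exactly the content of \cite[Corollary 3.11]{GN} as quoted in the excerpt just before Corollary~\ref{nilp mod cat are gr thr}: in a fusion category of odd integer dimension, the dimension of every object is automatically an integer. Thus in both cases $p=2$ and $p$ odd we may assume that all objects of $\C$ have integer dimension.

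Next I would verify that $\Z(\C)$ is nilpotent. By standard properties of the center recalled in the excerpt (Example~\ref{center}, and the discussion preceding it),
\[
\dim(\Z(\C)) = \dim(\C)^2 = p^{2n}.
\]
Since any fusion category of prime power Frobenius--Perron dimension is nilpotent (the result of \cite[Theorem 8.28]{ENO} quoted in the introduction and recalled just after Example~\ref{CGQ}), it follows that $\Z(\C)$ is nilpotent.

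Finally, I would invoke Corollary~\ref{cor0}: since $\C$ has integer dimensions of simple objects and $\Z(\C)$ is nilpotent, $\C$ is group-theoretical. There is no real obstacle here; the corollary is essentially a packaging of Corollary~\ref{cor0} together with the two standard facts that fusion categories of prime power dimension are nilpotent and that odd-dimensional fusion categories have integer dimensions, so the only thing to check is the bookkeeping above.
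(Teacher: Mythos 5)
Your proof is correct and is exactly the argument the paper intends: the authors state Corollary~\ref{cor1} without proof immediately after Corollary~\ref{cor0}, relying on precisely the two facts you supply, namely that $\dim(\Z(\C))=\dim(\C)^2=p^{2n}$ forces $\Z(\C)$ to be nilpotent by \cite[Theorem 8.28]{ENO}, and that odd (prime power) dimension forces integrality of object dimensions by \cite[Corollary 3.11]{GN}. Nothing is missing.
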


In other words, semisimple quasi-Hopf algebras of dimension $p^n$
are group-theoretical.

\begin{remark}
Semisimple Hopf algebras of dimension $p^n$ were studied by
several authors, see e.g., \cite{EG2}, \cite{Kash}, \cite{Ma1},
\cite{Ma2}, \cite{MW}, \cite{Z}.
\end{remark}

 From Corollary~\ref{nilp mod cat are gr thr} we obtain
the following Sylow decomposition.

\begin{theorem}\label{cor2}
Let $\C$ be a braided nilpotent fusion category  such that all
objects of $\C$ have integer dimension. Then $\C$ is
group-theoretical and has a decomposition into
a tensor product of braided fusion categories of prime power
dimension. If the factors are chosen
in such a way that their dimensions are relatively prime,
then such a decomposition is unique up to a permutation of factors.
\end{theorem}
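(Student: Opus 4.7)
The plan is to establish that $\C$ is group-theoretical and to decompose it using its modular center $\Z(\C)$, exploiting the structural results of Sections 3--6.

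First I would show that $\Z(\C)$ is nilpotent. The nilpotency of $\C$ provides an upper central series of subcategories; any grading $\C = \bigoplus_g \C_g$ by a finite group $U$ lifts to a grading of $\Z(\C)$ (since half-braidings decompose along the grading), with trivial component closely related to $\Z(\C_{ad})$. By induction on the nilpotency class, $\Z(\C)$ is nilpotent. Since $\Z(\C)$ is modular with integer dimensions of simple objects, Corollary \ref{cor0} applies and yields that $\C$ is group-theoretical.

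Second, by Theorem \ref{main1} there exist a finite group $G$ and $\omega \in Z^3(G, k^\times)$ with $\Z(\C) \cong \Z(\Vec_G^\omega)$. Since $\Rep(G)$ is the Lagrangian fusion subcategory of $\Z(\Vec_G^\omega) \cong \Z(\C)$, and fusion subcategories of nilpotent categories are nilpotent, $\Rep(G)$ is nilpotent and so $G$ is a nilpotent group. Writing the classical Sylow decomposition $G = \prod_p G_p$ and choosing a representative $\omega = \prod_p \omega_p$ within its cohomology class gives $\Z(\C) \cong \boxtimes_p \Z(\Vec_{G_p}^{\omega_p})$, with each factor a modular $p$-category. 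For each prime $p$ set $\C_p := \C \cap \Z(\Vec_{G_p}^{\omega_p})$ inside $\Z(\C)$: this is a braided fusion subcategory of $\C$ of Frobenius-Perron dimension a power of $p$, and since the factors of $\Z(\C)$ pairwise centralize, so do the $\C_p$'s inside $\C$. The resulting natural braided tensor functor $\boxtimes_p \C_p \to \C$ is an equivalence by a Frobenius-Perron dimension count based on Lemma \ref{diamond dimensions}, giving the desired Sylow decomposition into prime-power pieces.

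Uniqueness follows at once: given any decomposition $\C = \boxtimes_j \D_j$ into braided factors of pairwise relatively prime prime-power dimensions, each $\D_j$ embeds as a fusion subcategory of $\C$ of $p_j$-power dimension for some prime $p_j$; hence $\D_j \subseteq \C_{p_j}$, and dimension comparison forces equality after reordering the factors. The main obstacle I expect is the very first step of verifying that $\Z(\C)$ is nilpotent: lifting gradings from $\C$ to $\Z(\C)$ and identifying the trivial component of the lifted grading with a suitable piece of $\Z(\C_{ad})$ requires careful bookkeeping that the excerpt does not spell out. Once this step is in hand, the remainder of the argument is a clean application of the machinery already developed (Corollary \ref{cor0}, Corollary \ref{nilp mod cat are gr thr}, and Theorem \ref{main1}).
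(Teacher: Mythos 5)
Your overall strategy tracks the paper's fairly closely, but two of your steps have genuine gaps.

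First, the nilpotency of $\Z(\C)$. The paper does not prove this; it invokes \cite[Theorem 6.11]{GN} directly. Your proposed substitute --- that a faithful grading $\C=\oplus_{g}\C_g$ ``lifts'' to a grading of $\Z(\C)$ whose trivial component is controlled by $\Z(\C_{ad})$ --- does not work as stated: an object of $\Z(\C)$ is a pair $(X,c)$ with $X$ an arbitrary, not necessarily homogeneous, object of $\C$, and there is no natural faithful grading of $\Z(\C)$ with trivial component $\Z(\C_{ad})$ (already for $\C=\Vec_G$ with $G$ nonabelian, $\Z(\Vec_G)=\Rep(D(G))$ bears no such relation to $\Z(\Vec)=\Vec$). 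You correctly flagged this as the weak point; the repair is simply to quote \cite{GN}.

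Second, and more substantively, the decomposition step. Having written $\Z(\C)\cong\boxtimes_p\Z(\Vec_{G_p}^{\omega_p})$, you set $\C_p=\C\cap\Z(\Vec_{G_p}^{\omega_p})$ and claim $\C=\boxtimes_p\C_p$ ``by a Frobenius--Perron dimension count based on Lemma~\ref{diamond dimensions}.'' That lemma only gives $\dim(\vee_p\C_p)=\prod_p\dim(\C_p)$; it says nothing about whether this equals $\dim(\C)$. What you actually need is that every fusion subcategory of a Deligne product of fusion categories of pairwise coprime Frobenius--Perron dimensions splits as a product of subcategories of the factors --- equivalently, that if a simple $X=\boxtimes_p X_p$ lies in $\C$ then so does each factor. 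This is true but is precisely the content of the decomposition and requires an argument (for instance via the sub-semigroup $\{n:\Hom(\be,X^{\otimes n})\neq 0\}$ together with the fact that the universal grading group of the subcategory generated by $X_q$ is a nontrivial $q$-group whenever $X_q\neq\be$). The paper sidesteps this entirely by working on the dual side: $\C$ is dual to $\Vec_G^\omega$ with respect to an indecomposable module category, $\Vec_G^\omega$ splits along the Sylow decomposition of the nilpotent group $G$, and \cite[Proposition 8.55]{ENO} then splits the dual category. Your uniqueness argument inherits the same gap (the assertion ``hence $\D_j\subseteq\C_{p_j}$'' is the splitting statement again), whereas the paper characterizes $\C_p$ intrinsically by the condition that $\Hom(\be,X^{\otimes p^k})\neq 0$ for some $k$, which is manifestly independent of the chosen decomposition and makes uniqueness immediate.
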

\begin{proof}
It was shown in \cite[Theorem 6.11]{GN} that the center of a
braided nilpotent fusion category is nilpotent. Hence,
$\mathcal{Z}(\C)$ is group-theoretical by
Corollary~\ref{nilp mod cat are gr thr}.
Since $\C$ is equivalent to a subcategory of $\mathcal{Z}(\C)$, it
is group-theoretical by \cite[Proposition 8.44]{ENO}. This
means that there is a group $G$ and $\omega\in Z^3(G,\, k^*)$ such
that  $\C$ is dual to $\Vec_G^\omega$ with respect to some
indecomposable module category. The group $G$ is necessarily
nilpotent since $\Rep(G)\subseteq \mathcal{Z}(\Vec_G^\omega) \cong
\mathcal{Z}(\C)$. Hence, $G$ is isomorphic to a direct product of
its Sylow $p$-subgroups, $G = G_1 \times \cdots \times G_n$, and so
$\Vec_G^\omega$ is equivalent to a tensor product of
$p$-categories. It follows from \cite[Proposition 8.55]{ENO} that
the dual category $\C$ is also a product of fusion $p$-categories,
as desired.


Now suppose that $\C$ is decomposed into factors of 
prime power Frobenius-Perron dimension,
$\C \simeq \boxtimes_p \C_p$. 
It is easy to see that the objects from $\C_p \subset \C$ are
characterized by the following property:
\begin{equation}
\label{*}
X\in \C_p \mbox{ if and only if there exists }
k\in \BZ^+ \mbox{ such that }\Hom(\be, X^{\otimes^{p^k}})\ne 0.
\end{equation}
This shows that the decomposition in question is unique.
\end{proof}

\begin{remark}
\label{exponent leq 2}
Let $\C$ be a nilpotent modular category with integral dimensions 
of simple objects.
We already mentioned in the introduction that 
the choice of a tensor complement $\M$
satisfying $\C \bt \M \cong \Vec_G^\omega$ is not unique. In the proof
of Theorem~\ref{main2} such $\M$ can be chosen canonically 
as the category opposite to the canonical modularization 
corresponding to a maximal isotropic subcategory of $\C$, 
see  Proposition~\ref{canonical modularization}.

Another canonical way is to choose an $\M$ of minimal possible dimension.
This is done as follows.
By Theorem~\ref{cor2}, we have $\C = \boxtimes_p\, \C_p$ and
$\M = \boxtimes_p\, \M_p$, where $\C_p, \M_p$ are modular $p$-categories.
By Theorem~\ref{main3}, $\M_p$ has to be chosen in such a way
that $\dim(\C_p)\dim(\M_p)$ is a square and $\xi(\M_p)= \xi(\M_p)^{-1}$.
It follows from 
Examples~\ref{peven}, \ref{podd} and Corollary~\ref{charge}
that there is a unique such choice of $\M_p$ with minimal $\dim(\M_p)$,
in which case $\dim(\M_p) \in \{ 1,\, p,\, p^2\}$ for odd $p$ and
$\dim(\M_2)\in\{1, 2, 4, 8\}$.
\end{remark}

\begin{theorem}\label{cor3}
Let $\C$ be a braided nilpotent fusion category. 
Then $\C$ has a unique decomposition into
a tensor product of braided fusion categories of prime power dimension.
\end{theorem}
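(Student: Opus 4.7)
The statement here differs from Theorem~\ref{cor2} only in that the hypothesis that all simple objects of $\C$ have integer dimension has been dropped. Since nilpotent fusion categories have integer total Frobenius-Perron dimension (see Subsection~\ref{nilpprelim}), the prime powers into which $\FPdim(\C)$ factors are already fixed, so what remains is to establish existence of the decomposition without knowing that individual $\FPdim$'s are integers, together with uniqueness. I would argue by induction on the nilpotency class of $\C$, using the universal grading and the Sylow decomposition of its grading group.

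For the base case, when $\C$ has nilpotency class at most one, $\C$ is pointed, and hence of the form $\C(A,q)$ for a metric group $(A,q)$ as in Subsection~\ref{CGQ}. The abelian group $A$ splits as a direct product $\prod_p A_p$ of its Sylow subgroups, and the quadratic form $q$ splits correspondingly because the associated bicharacter must vanish on $A_p\times A_q$ for $p\ne q$ on order-theoretic grounds. This yields $\C \simeq \bt_p\,\C(A_p,q_p)$, the desired prime power decomposition.

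For the inductive step I would consider the universal grading $\C=\bigoplus_{g\in U(\C)}\C_g$, whose trivial component is $\C_{ad}$. Because $\C$ is braided, the grading group $U(\C)$ is abelian (a property of the universal grading of braided categories established in \cite{GN}), so it decomposes as $U(\C)=\prod_p U_p$ into Sylow subgroups. Set $\C^{(p)}:=\bigoplus_{g\in U_p}\C_g$, a braided fusion subcategory of $\C$ containing $\C_{ad}$. Applying the inductive hypothesis to $\C_{ad}$ (which has strictly smaller nilpotency class and is again braided nilpotent) produces a decomposition $\C_{ad}\simeq\bt_p\,(\C_{ad})_p$; checking $(\C^{(p)})_{ad}=(\C_{ad})_p$ together with Lemma~\ref{diamond dimensions} then shows that each $\C^{(p)}$ has Frobenius-Perron dimension a power of $p$.

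The hard part is to promote the expected identity $\C=\bigvee_p\,\C^{(p)}$ of fusion subcategories to a braided tensor equivalence $\bt_p\,\C^{(p)}\xrightarrow{\sim}\C$. This requires showing that for distinct primes $p\ne q$ and simple $X\in\C^{(p)}$, $Y\in\C^{(q)}$, the double braiding $c_{YX}c_{XY}$ is the identity and $X\otimes Y$ remains simple, so that the braided tensor product functor is well-defined and faithful. The coprimality of $|U_p|$ and $|U_q|$, together with the inductively established fact that the braiding already factors on $\C_{ad}$, should force this centralization, after which a dimension count via Lemma~\ref{diamond dimensions} finishes existence. Uniqueness, once the factors are required to have pairwise coprime Frobenius-Perron dimensions, then follows verbatim from the intrinsic characterization~\eqref{*} already established in the proof of Theorem~\ref{cor2}: a simple $X$ lies in the $p$-component if and only if some tensor power $X^{\otimes^{p^k}}$ contains the unit object.
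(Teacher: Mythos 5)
Your base case is fine, but the inductive step contains a genuine error in the way the candidate $p$-components are defined, and it is not a minor slip. The subcategories $\C^{(p)}=\oplus_{g\in U_p}\C_g$ each contain the \emph{entire} trivial component $\C_{ad}$, so $\FPdim(\C^{(p)})=|U_p|\cdot\FPdim(\C_{ad})$, which is a power of $p$ only if $\FPdim(\C_{ad})$ already is. Take $\C=\A\bt\B$ with $\A$ a non-pointed nilpotent $2$-category and $\B$ a non-pointed nilpotent $3$-category (e.g.\ centers of $\Vec_G^\omega$ for nonabelian nilpotent $2$- and $3$-groups). Then $U(\C)=U(\A)\times U(\B)$, $U_2=U(\A)$, and $\C^{(2)}=\A\bt\B_{ad}$, whose dimension is divisible by $3$ because $\B_{ad}\neq\Vec$; moreover $(\C^{(2)})_{ad}=\A_{ad}\bt(\B_{ad})_{ad}$, which in general differs from $(\C_{ad})_2=\A_{ad}$, so the identity $(\C^{(p)})_{ad}=(\C_{ad})_p$ that you invoke is false. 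Since all the $\C^{(p)}$ contain $\C_{ad}$, they pairwise intersect in more than $\Vec$ whenever $\C_{ad}\neq\Vec$, and $\prod_p\FPdim(\C^{(p)})=|U(\C)|\cdot\FPdim(\C_{ad})^{r}$ (with $r$ the number of primes involved) overshoots $\FPdim(\C)=|U(\C)|\cdot\FPdim(\C_{ad})$; hence the asserted equivalence $\bt_p\,\C^{(p)}\simeq\C$ is impossible. The correct components are the ones singled out by \eqref{*} (in the example, $\A\bt\Vec$ rather than $\A\bt\B_{ad}$), and these are \emph{not} unions of universal-grading components over a Sylow subgroup. Finally, even with correct components, your ``hard part'' (that distinct components centralize each other and that the multiplication functor is an equivalence) is only asserted, not proved.

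The paper takes an entirely different and much shorter route that sidesteps both difficulties: for simple $X$ one has $\FPdim(X)=\sqrt{N}$ with $N\in\BN$, so $X\bt X$ has integer dimension and lies in the integral fusion subcategory $(\C\bt\C)^{int}$, which is again braided nilpotent with integral dimensions of simple objects. Theorem~\ref{cor2} (whose proof proceeds via group-theoreticity and the Sylow decomposition of the underlying group, not via the universal grading) applies to $(\C\bt\C)^{int}$, and the resulting decomposition is transported back to $\C$ using the intrinsic characterization \eqref{*}. If you insist on an induction on nilpotency class, you would at minimum have to cut $\C^{(p)}$ down using the $p$-part of $\C_{ad}$ supplied by the inductive hypothesis (e.g.\ via a commutator-type construction) and then still establish the centralization statement; the reduction to the integral case is considerably cleaner.
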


\begin{proof} According to Theorem \ref{cor2} 
the result is true if the dimensions of simple objects
of $\C$ are integers. 
In general, define subcategories $\C_p\subset \C$ by condition
\eqref{*} above.
For a simple object $X\in \C$ it is known (see \cite{GN}) 
that $\FPdim(X)=\sqrt{N}$, $N\in \BN$.
Thus $X\boxtimes X\in \C \boxtimes \C$ has an integer dimension. 
The category $\C \boxtimes \C$
contains a fusion subcategory $(\C \boxtimes \C)^{int}$ 
consisting of all objects with integer dimension,
see \cite{GN}. We can apply Theorem \ref{cor2} 
to the category $(\C \boxtimes \C)^{int}$ and obtain
a unique decomposition $X=\otimes_p X_p$ with $X_p\in \C_p$. 
The theorem is proved.
\end{proof}

\begin{corollary} Let $\C$ be a braided nilpotent fusion category. 
Assume that $X\in \C$ is simple
and its dimension is not integer. Then $\FPdim(X)\in \sqrt{2}\BZ$.
\end{corollary}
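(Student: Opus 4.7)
The plan is to invoke Theorem~\ref{cor3} and reduce to a prime-by-prime analysis. First I would apply Theorem~\ref{cor3} to write $\C = \boxtimes_p\, \C_p$, where each $\C_p$ is a braided nilpotent fusion category of prime power Frobenius--Perron dimension $\FPdim(\C_p) = p^{n_p}$. The simple object $X$ then factors uniquely as $X = \otimes_p X_p$ with each $X_p \in \C_p$ simple, so $\FPdim(X) = \prod_p \FPdim(X_p)$.

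Next I would pin down $\FPdim(X_p)$ for each prime $p$. Since $X_p$ lies in the braided nilpotent category $\C_p$, the \cite{GN} result invoked just before the statement gives $\FPdim(X_p)^2 \in \mathbb{Z}^+$. The ratio $\FPdim(\C_p)/\FPdim(X_p)^2$ is an algebraic integer (see, e.g., \cite[Prop.~8.15]{ENO}); being rational as well, it must be a positive integer. Hence $\FPdim(X_p)^2$ divides $p^{n_p}$, so $\FPdim(X_p) = p^{k_p/2}$ for some integer $0 \le k_p \le n_p$. For every odd prime $p$, the category $\C_p$ has odd integer dimension, and \cite[Cor.~3.11]{GN} forces all its simple objects to be integer-dimensional; therefore $k_p$ is even and $\FPdim(X_p) \in \mathbb{Z}$ whenever $p$ is odd.

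To finish, the non-integrality of $\FPdim(X)$ must be concentrated at $p=2$: $\FPdim(X_2) = 2^{k_2/2}$ is non-integer, forcing $k_2$ to be odd, whence $\FPdim(X_2) = 2^{(k_2-1)/2}\sqrt{2} \in \sqrt{2}\,\mathbb{Z}$. Multiplying by the integer factors $\FPdim(X_p)$ for odd $p$ yields $\FPdim(X) \in \sqrt{2}\,\mathbb{Z}$, as required. The only technical point worth flagging is the divisibility $\FPdim(X_p)^2 \mid p^{n_p}$ in the middle step, which combines the $\sqrt{N}$-statement of \cite{GN} with the algebraic-integer divisibility result from \cite{ENO}; after that everything follows mechanically from Theorem~\ref{cor3}.
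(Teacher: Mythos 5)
Your overall route is exactly the paper's: the published proof is a two-line deduction from Theorem~\ref{cor3} together with the fact from \cite{ENO} that a fusion category of prime power Frobenius--Perron dimension $p^k$ containing a simple object of non-integer dimension must have $p=2$. Your reduction to the factors $\C_p$, the treatment of odd primes via \cite[Corollary 3.11]{GN}, and the final parity argument at $p=2$ all match what the authors intend; you are also right that the only real content is the divisibility $\FPdim(X_p)^2\mid p^{n_p}$.

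That step, however, is justified incorrectly. The claim that $\FPdim(\C)/\FPdim(X)^2$ is an algebraic integer for a simple object $X$ of an arbitrary fusion category is false: for $\C=\Rep(S_3)$ (which is even braided) and $X$ the two-dimensional irreducible, the ratio is $6/4=3/2$, not an algebraic integer. So whatever \cite[Proposition 8.15]{ENO} says, it cannot be invoked in this generality; the integrality of $\dim(\C)/d(X)^2$ is a theorem for \emph{modular} categories, and your $\C_p$ need not be modular (it can be symmetric, e.g.\ $\Rep(Q_8)$ for $p=2$). The conclusion you want is nevertheless true, but the correct reason is nilpotency: $\C_p$ has prime power Frobenius--Perron dimension, hence is nilpotent by \cite[Theorem 8.28]{ENO}, and \cite{GN} shows that in a nilpotent fusion category the square of the Frobenius--Perron dimension of any simple object divides the Frobenius--Perron dimension of the category --- this is the sharpened form of the ``$\FPdim(X)=\sqrt{N}$'' fact already quoted in the proof of Theorem~\ref{cor3}. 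With that citation substituted for the algebraic-integrality argument, your proof is complete and coincides with the paper's.
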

\begin{proof} This follows immediately from Theorem \ref{cor3} 
since if a category of prime power
Frobenius-Perron dimension $p^k$ contains an object 
of a non-integer dimension then $p=2$,
see \cite{ENO}.
\end{proof}

\begin{example} It is easy to see that 
the Tambara-Yamagami categories from \cite{TY} are
nilpotent and indecomposable into a tensor product. 
Thus Theorem \ref{cor3} implies
that if such a category admits a braiding, 
then its dimension should be a power of 2 (since the
dimension of a Tambara-Yamagami category is always divisible by 2). 
A stronger result is contained in \cite{Sie}.
\end{example}


\bibliographystyle{ams-alpha}

\begin{thebibliography}{ABC}

\bibitem[Be]{Be} R.~Bezrukavnikov,
\textit{On tensor categories attached to cells in affine Weyl groups},
Representation theory of algebraic groups and quantum groups,
69--90, Adv. Stud. Pure Math., 40, Math. Soc. Japan, Tokyo, 2004.

\bibitem[Br]{Br} A.~Brugui\`{e}res,
\textit{Cat\'egories pr\'emodulaires, modularization et invariants
des vari\'et\'es de dimension $3$}, Mathematische Annalen,
\textbf{316} (2000), no. 2, 215-236.

\bibitem[BD]{BD} M.~Boyarchenko, V.~Drinfeld, 
\textit{A motivated introduction
to character sheaves and the orbit method for unipotent groups 
in positive characteristic}, math.RT/0609769.

\bibitem[BK]{BK} B.~Bakalov, A.~Kirillov Jr.,
\textit{Lectures on Tensor categories and modular functors},
AMS, (2001).

\bibitem[CG]{CG} A.~Coste, T.~Gannon, 
\textit{Remarks on Galois symmetry in
rational conformal field theories}, Phys. Lett. B 
\textbf{323} (1994), no. 3-4, 316-321.

\bibitem[De]{De} P.~Deligne, 
{\em Cat\'egories tensorielles},  Mosc. Math. J.  {\bf 2}
(2002),  no. 2, 227--248.

\bibitem[Dr]{Dr} V.G.~Drinfeld,
\textit{Quasi-Hopf algebras}, Leningrad Math. J. \textbf{1}
(1990), no. 6, 1419-1457.

\bibitem[dBG]{dBG} J.~de~Boere, J.~Goeree, 
\textit{Markov traces and $II_1$
factors in conformal field theory}, Comm. Math. Phys. \textbf{139}
(1991), no. 2, 267-304.

\bibitem[DPR]{DPR} R. Dijkgraaf, V. Pasquier, and P. Roche,
\textit{Quasi-quantum groups related to orbifold models},
Nuclear Phys. B. Proc. Suppl. \textbf{18B} (1990), 60-72.


\bibitem[EG1]{EG1} P.~Etingof and S.~Gelaki, 
\textit{Isocategorical groups},
International Mathematics Research Notices {\bf 2} (2001), 59--76.

\bibitem[EG2]{EG2} P.~Etingof and S.~Gelaki, 
\textit{On finite-dimensional semisimple and
cosemisimple Hopf algebras in positive characteristic},
International Mathematics Research Notices {\bf 16} (1998),
851--864.

\bibitem[ENO]{ENO} P.~Etingof, D.~Nikshych, V.~Ostrik,
\textit{On fusion categories}, Annals of Mathematics
\textbf{162} (2005), 581-642.

\bibitem[EO]{EO} P.~Etingof, V.~Ostrik,
\textit{Finite tensor categories},
Moscow Math.\ Journal \textbf{4} (2004), 627-654.

\bibitem[GN]{GN} S.~Gelaki, D.~Nikshych,
\textit{Nilpotent fusion categories}, math.QA/0610726.

\bibitem[JS]{JS} A.~Joyal, R.~Street,
\textit{Braided tensor categories}, Adv.\ Math.,
\textbf{102}, 20-78 (1993).

\bibitem[Kash]{Kash} Y.~Kashina,
\textit{Classification of semisimple Hopf algebras of dimension
16}, J.\  Algebra  \textbf{232}  (2000),  no. 2, 617--663.

\bibitem[Kass]{Kass} C. Kassel, \textit{Quantum groups}, Graduate
Texts in Mathematics {\bf 155}, Springer, New York.

\bibitem[KaO]{KaO} I.~Kath, M.~Olbrich
\textit{Metric Lie algebras and quadratic extensions},
Transform.\ Groups  \textbf{11}  (2006),  no. 1, 87--131.

\bibitem[KiO]{KiO} A.~Kirillov Jr., V. Ostrik,
\textit{On $q$-analog of McKay correspondence and ADE classification of
$\hat{\mathfrak{s}\mathfrak{l}}_2$  conformal field theories},
Adv. Math. {\bf 171} (2002), no. 2, 183--227.

\bibitem[Ma1]{Ma1} A.~Masuoka,
\textit{The $p^n$ theorem for semi-simple Hopf algebras}, Proc.\
AMS \textbf{124} (1996), 735-737.

\bibitem[Ma2]{Ma2} A.~Masuoka,
\textit{Self-dual Hopf algebras of dimension $p^3$ obtained by
extension}, J. Algebra {\bf 178} (1995), 791--806.

\bibitem [Mu1]{Mu1} M.~M\"uger, 
\textit{Galois theory for braided tensor categories
and the modular closure}, Adv. Math. {\bf 150} (2000), no. 2, 151--201.

\bibitem [Mu2]{Mu2} M.~M\"uger, On the structure of modular categories,
\textit{Proc. Lond. Math. Soc.}, \textbf{87} (2003), 291-308.

\bibitem [Mu3]{Mu3} M.~M\"uger,
\textit{Galois extensions of braided tensor categories and 
braided crossed G-categories}, J. Algebra {\bf 277} (2004), 
no. 1, 256--281.

\bibitem [Mu4]{Mu4} M.~M\"uger,
\textit{From subfactors to categories and topology. II. 
The quantum double of tensor categories and subfactors},
J.\ Pure Appl.\ Algebra  \textbf{180}  (2003),  no. 1-2, 159--219.

\bibitem[MMT]{MMT} R.~McKenzie,  G.~McNulty,  W.~Taylor,
\textit{Algebras, lattices, varieties. Vol. I.}, The Wadsworth \&
Brooks/Cole Mathematics Series. Wadsworth \& Brooks/Cole Advanced
Books \& Software, Monterey, CA, 1987.

\bibitem [MW]{MW} S.~Montgomery, S.~Witherspoon,
\textit{Irreducible representations of crossed products},
J.\ Pure Appl.\ Algebra,  \textbf{129}  (1998),  no. 3, 315--326.

\bibitem[N]{N} D.~Naidu,
\textit{Categorical Morita equivalence for group-theoretical
categories}, Comm. Alg., to appear, math.QA/0605530.

\bibitem [O1]{O1} V.~Ostrik,
\textit{Module categories, weak Hopf algebras and modular invariants},
Transform. Groups, \textbf{8} (2003), 177-206.

\bibitem [O2]{O2} V.~Ostrik,
\textit{Module categories over the Drinfeld double of a finite group},
Int.\ Math.\ Res.\ Not. (2003) no.\ 27, 1507-1520.

\bibitem [P]{P} B.~Pareigis,
\textit{On braiding and dyslexia}, J. Algebra {\bf 171} (1995),
no. 2, 413--425.

\bibitem[Q]{Q} F.~Quinn, {\em Group categories and their field theories},
Proceedings of the Kirbyfest (Berkeley, CA, 1998), 407--453
(electronic), Geom. Topol. Monogr., 2, Geom. Topol. Publ.,
Coventry, 1999.

\bibitem[RT]{RT} N.~Reshtikhin, V.~Turaev,
\textit{Ribbon graphs and their invariants derived from quantum groups},
Comm.\ Math.\ Phys., \textbf{127} (1990), 1-26.

\bibitem[S]{Sie} J.~A.~Siehler, 
\textit{Braided Near-group Categories}, math.QA/0011037.

\bibitem [T]{T} V.~Turaev,
\textit{Quantum invariants of knots and $3$-manifolds}, W.\ de
Gruyter (1994).

\bibitem[TY]{TY} D.~Tambara, S.~Yamagami, \textit{Tensor categories with
fusion rules of self-duality for finite abelian groups}, J.~Algebra
\textbf{209} (1998), no. 2, 692--707.

\bibitem[Z]{Z} Y.~Zhu, \textit{Hopf algebras of prime dimension},
Internat. Math. Res. Notices {\bf 1} (1994), 53--59.

\end{thebibliography}

\end{document}